\newtheorem{Def}{Definition}[section]
\newtheorem{thm}[Def]{Theorem}
\newtheorem{prop}[Def]{Proposition}
\newtheorem{rmk}[Def]{Remark}
\newtheorem{lem}[Def]{Lemma}
\newenvironment{taggedhypmas}[1]
    {\taggedhypxmas}
    {\endtaggedhypxmas}
\newenvironment{taggedhypmac}[1]
    {\taggedhypxmac}
    {\endtaggedhypxmac}
\newenvironment{taggedhypacd}[1]
    {\taggedhypxacd}
    {\endtaggedhypxacd}
\title{First-Order Pontryagin Maximum Principle for Risk-Averse Stochastic Optimal Control Problems}
\author{R. Bonalli\thanks{Laboratoire des Signaux et Syst\`emes, Universit\'e Paris-Saclay, CNRS, CentraleSup\'elec \textit{E-mail}: \texttt{riccardo.bonalli@l2s.centralesupelec.fr}} \,and Benoît Bonnet\thanks{LAAS-CNRS, Université de Toulouse, CNRS, 7 avenue du colonel Roche, F-31400 Toulouse, France. \textit{E-mail}: \texttt{benoit.bonnet@laas.fr}}}
\begin{document}

\maketitle

\begin{abstract}
In this paper, we derive first-order Pontryagin optimality conditions for risk-averse stochastic optimal control problems subject to final time inequality constraints, and whose costs are general, possibly non-smooth finite coherent risk measures. Unlike preexisting contributions covering this situation, our analysis holds for classical stochastic differential equations driven by standard Brownian motions. In addition, it presents the advantages of neither involving second-order adjoint equations, nor leading to the so-called weak version of the PMP, in which the maximization condition with respect to the control variable is replaced by the stationarity of the Hamiltonian.
\end{abstract}

{\footnotesize
\textbf{Keywords :}  Risk-averse stochastic optimal control, Pontryagin maximum principle, First-order stochastic necessary optimality conditions, Set-valued analysis.

\vspace{0.25cm}

\textbf{MSC2020 Subject Classification :}   93E03, 93E20.
}

\tableofcontents


\section{Introduction} 
\setcounter{equation}{0} \renewcommand{\theequation}{\thesection.\arabic{equation}}

In the last decades, risk-averse stochastic optimal control has seen a surge of interest as a tool for designing control laws that enjoy robustness properties against uncertainties. Relevant applications of this theory encompass broad research fields, ranging from risk-averse financial investments to the safe control of autonomous systems, as evidenced e.g. by the recent monographs \cite{Chapman2021,Shapiro2021} and their bibliography. In this context, first-order necessary conditions for optimality in the form of Pontryagin's Maximum Principle (we will refer to these latter as ``risk-averse PMP'' in the sequel) are bound to play a key role in characterizing and numerically computing optimal control strategies, as it is known to be the case for classical stochastic optimal control problems in which only expectation-based costs and constraints are considered \cite{Yong1999}. However, extending the PMP in its general form to more involved risk-averse settings still requires substantial investigations.

To the best of our knowledge, the derivation of a risk-averse PMP was attempted firstly in \cite{Sun2017}, where appropriate adjoint equations and maximality conditions formulated in terms of the so-called $G$-Stochastic calculus are introduced in order to cope with the presence of risk measures. This framework was originally introduced by Peng \cite{Peng2007}, and developed by the stochastic control community later on, see e.g. \cite{Redjil2017,Soner2011}. In this setting, the standard Brownian motion is replaced by a so-called $G$-Brownian motion, which is modelled as a stochastic process whose distribution is the product of a standard Gaussian and a Lipschitz map, and whose role is to transform the coherent risk measure into a standard, though non-linear expectation. While practical for some applications, this procedure requires to change the dynamics of the system, which is not always natural e.g. when the diffusion term aims at rendering an unknown uncertainty exerted on the system by the environment. Therefore, for certain classes of problems, it is still relevant to investigate optimality conditions relying on standard stochastic calculus, and which do not require to infuse additional uncertainty in the formulation of the control problem. Along this line, a risk-averse PMP for problems which are subject to stochastic differential equations stemming from classical Wiener processes is proposed in \cite{Isohatala2021}, though no final constraints are included therein and the underlying risk measures are assumed to be continuously Fr\'echet differentiable. From a different standpoint, first-order necessary optimality conditions for convex risk-averse optimization problems subject to partial differential equations and general subdifferentiable risk-measure-based costs are derived in \cite{Garreis2021}, by leveraging classical tools from convex analysis. Nevertheless, final constraints are also ruled out in this work, and the necessary conditions for optimality are written down as simple Euler conditions and not as a general Karush-Kuhn-Tucker system, which would be the natural ``static'' counterpart of the PMP.

In this paper, we propose a first step towards bridging the aforedescribed gap by establishing a first-order risk-averse PMP for a class of finite-dimensional constrained stochastic optimal control problems. Therein, one aims at minimizing a final cost modelled as a general subdifferentiable coherent risk measure over a class of admissible trajectories driven by a controlled stochastic differential equations involving standard Wiener processes, and subject to final time inequality constraints. Our proof leverages a general methodology that was first developed in \cite{Frankowska1987}, allowing for a natural extension of the first-order PMP for stochastic optimal control problems with expectation-based costs discussed in \cite{Frankowska2020} to the risk-averse setting. Specifically, the main advantages offered by this approach over more classical needle-like variations or Ekeland's principle-based methods are twofold. Firstly, no additional second-order adjoint variables (nor related second-order adjoint equations) are required to establish a fully informative PMP. Secondly, it permits the derivation of the so-called strong maximum principle, in which the optimal controls are characterized as being pointwise maximizers of the Hamiltonian. This is in contrast with some reference contributions in stochastic optimal control that establish weaker variants of the PMP in which the maximization condition is relaxed by requiring the stationarity of the Hamiltonian \cite{Frankowska2017,Frankowska2018}.  In what follows, we propose two separate sets of optimality conditions for the class of optimal control problems at hand, depending on whether the control variable appears in the diffusion term or not. When the control acts only on the deterministic drift, the variational linearization techniques subtending the proof of the maximum principle can be performed much like in the deterministic case, by considering perturbations which are tangent to the set of relaxed velocities. When the diffusion is controlled, however, it is not possible to replicate such a strategy as the It\^o integral does not exhibit the nice convexifying effects of the Lebesgue or Bochner integrals -- a fact which is expounded by an original example in Remark \ref{rmk:Ito} --, and one thus needs to impose an a priori convexity assumptions on the sets of admissible drift and diffusion pairs, similar to that considered e.g. in \cite{Frankowska2020}. 

The paper is organized as follows. In Section \ref{sec:Preli}, we recollect known concepts of stochastic calculus and set-valued analysis, which feature a counterexample to Aumann's theorem for the It\^o integral that we believe to be of independent interest. In Section \ref{sec:PMP}, we expose the main contributions of this article, which are first-order Pontryagin optimality conditions for risk-averse stochastic optimal control problems. We start in Section \ref{sec:controlledDiff} with the case in which the diffusion term of the driving stochastic dynamics is controlled, and expose the proof in great details in this context. We then show in Section \ref{sec:uncontrolledDiff} how the aforeproposed methodology can be used to prove the PMP under more general assumptions when the diffusion term is control-free, and close the paper with Sections \ref{sec:Example} and \ref{sec:conclusion} which respectively contain some application examples and important perspectives.


\section{Preliminaries}
\label{sec:Preli}

\setcounter{equation}{0} \renewcommand{\theequation}{\thesection.\arabic{equation}}

In this section, we recollect some useful concepts and results of stochastic calculus, for which we mainly refer to \cite{LeGall2016,Yong1999}, as well as notions of set-valued analysis mostly excerpted from \cite{Aubin1990}. From now on, we fix positive integers $n , m , d \in \mathbb{N}$, a finite time horizon $T > 0$, and let $\beta \in [1,+\infty)$.

\subsection{Stochastic Calculus}

Throughout this article, we will consider random variables defined over a probability space $(\Omega,\mathcal{G},\mathbb{P})$. For any sub $\sigma$-algebra $\mathcal{S} \subset \mathcal{G}$, we denote by $L^{\beta}_{\mathcal{S}}(\Omega,\mathbb{R}^n)$ the Banach space of random variables $z : \Omega \rightarrow \mathbb{R}^n$ which are $\mathcal{S}$-measurable and such that
\begin{equation*}
\| z \|_{L^{\beta}} \triangleq \mathbb{E} \big[ \, \| z \|^{\beta} \, \big]^{1/\beta} < \infty,
\end{equation*}
where $\| \cdot \|$ denotes the Euclidean norm. It is a standard consequence of Riesz's theorem that $L^{\beta}_{\mathcal{S}}(\Omega,\mathbb{R})^*$ is isomorphic to $L^{\gamma}_{\mathcal{S}}(\Omega,\mathbb{R})$ where $\gamma \in(1,+\infty]$ satisfies $1/\beta + 1/\gamma=1$. 

Let $(W_s)_{s \in [0,T]} = (W^1_s,\dots,W^d_s)_{s \in[0,T]}$ be a $d$--dimensional Wiener process which generates a complete filtration 
\begin{equation*}
 \mathcal{F} \triangleq (\mathcal{F}_t)_{t \in [0,T]} = \Big( \sigma \big( W_s : 0 \le s \le t \big) \Big)_{t \in [0,T]},
\end{equation*}
and denote by  $L^{\beta}_{\mathcal{F}}([0,T] \times \Omega,\mathbb{R}^n)$ the corresponding Banach space of $\mathcal{F}$-progressively measurable -- or progressively measurable -- processes $x :[0,T] \times\Omega \rightarrow \mathbb{R}^n$ which satisfy
\begin{equation*}
\| x \|_{L^{\beta}_{\mathcal{F}}} \triangleq \mathbb{E}\bigg[ \int^T_0 \| x(s) \|^{\beta} \; \mathrm{d}s \bigg]^{1/\beta} < \infty. 
\end{equation*}
In addition, denote by $C^{\beta}_{\mathcal{F}}([0,T] \times \Omega,\mathbb{R}^n)$ the Banach space of $\mathcal{F}$-adapted processes $x : [0,T] \times\Omega \rightarrow \mathbb{R}^n$ which have continuous sample paths and finite sup norm, namely
\begin{equation*}
\| x \|_{C^{\beta}_{\mathcal{F}}} \triangleq \mathbb{E} \bigg[ \, \underset{s \in [0,T]}{\sup} \ \| x(s) \|^{\beta} \bigg]^{1/\beta} < \infty.
\end{equation*}
In particular, $C^{\beta}_{\mathcal{F}}([0,T] \times \Omega,\mathbb{R}^n) \subset L^{\beta}_{\mathcal{F}}([0,T] \times \Omega,\mathbb{R}^n)$. In the sequel given  $t \in [0,T]$, we will often use the standard notation $x(t) : \Omega \to \mathbb{R}^n$ to refer to progressively measurable processes. In addition, when we say that a property holds ``almost everywhere'', it shall always be understood with respect to the progressive $\sigma$-algebra generated by the filtration $\mathcal{F}$ on $[0,T]\times\Omega$.

An $\mathcal{F}$-adapted process $x : [0,T] \times\Omega \rightarrow \mathbb{R}^n$ such that $x(s) \in L^1_{\mathcal{F}_s}(\Omega,\mathbb{R}^n)$ for every $s \in [0,T]$ is called a \textit{martingale} provided that
\begin{equation*}
\mathbb{E}\big[ x(t) | \mathcal{F}_s \big] = x(s),
\end{equation*}
for all $0 \leq s < t \leq T$. We then say that a martingale $x : [0,T] \times\Omega \rightarrow \mathbb{R}^n$ is uniformly bounded in $L^{\beta}$ if there exists a constant $C > 0$ such that
\begin{equation*}
\underset{t \in [0,T]}{\sup} \ \| x(t) \|_{L^{\beta}} \le C .
\end{equation*}
In this setting, for every $x \in L^{\beta}_{\mathcal{F}}([0,T] \times \Omega,\mathbb{R}^n)$ and each $i \in \{1,\dots,d\}$, we write
\begin{equation*}
y^i : t \in[0,T] \mapsto \int^{t}_0 x(s) \; \mathrm{d}W^i_s    
\end{equation*}
for the It\^o integral of $x$ with respect to $W^i$, and recall that $y^i$ is then a martingale which is additionally in $C^{\beta}_{\mathcal{F}}([0,T] \times \Omega,\mathbb{R}^n)$. Analogously, we introduce the notation
\begin{equation*}
y(t) = \int^t_0 x(s) \; \mathrm{d}W_s \triangleq \sum^d_{i=1} \int^t_0 x(s)^i \; \mathrm{d}W^i_s
\end{equation*}
for $x \in L^{\beta}_{\mathcal{F}}([0,T] \times \Omega,\mathbb{R}^{n \times d})$, where $x(s) = ( x(s)^1 | \dots | x(s)^d )$ and $x(s)^i \in L^{\beta}_{\mathcal{F}_s}(\Omega,\mathbb{R}^n)$, and recall the famed \textit{Burkholder-Davis-Gundy} inequality 
\begin{equation}
\mathbb{E} \bigg[ \sup_{t \in[0,T]} \|y(t)\|^{\beta} \bigg] \leq C_{\beta} \mathbb{E} \Bigg[ \bigg( \int_0^T \|x(t)\|^2 \mathrm{d}t \bigg)^{\hspace{-0.075cm} \beta/2} \, \Bigg]
\end{equation}
which holds for some constant $C_{\beta}>0$ that only depends on $\beta\in[1,+\infty)$. The following representation theorem for martingales (see e.g. \cite[Theorem 5.18]{LeGall2016}) will be crucial in the derivation of the adjoint dynamics of the PMP in Section \ref{sec:PMP}.
\begin{thm}[Martingale representation theorem]
\label{thm:Mart}
Let $x : [0,T] \times\Omega \rightarrow \mathbb{R}^n$ be a martingale which is uniformly bounded in $L^2$. Then, there exist a vector $N \in \mathbb{R}^n$ and a stochastic process $\mu \in L^2_{\mathcal{F}}([0,T] \times \Omega,\mathbb{R}^{n \times d})$ such that
$$
x(t) = N + \int^t_0 \mu(s) \, \mathrm{d}W_s \qquad \textnormal{for all} \ t \in [0,T] .
$$
\end{thm}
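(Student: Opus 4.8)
The plan is to reduce the representation of the whole martingale to that of its terminal value, and to establish the latter by a density argument in $L^2_{\mathcal{F}_T}(\Omega,\mathbb{R})$. Since the conclusion is linear in $x$, I would argue componentwise and thus assume that $x$ is scalar-valued, seeking $\mu \in L^2_{\mathcal{F}}([0,T]\times\Omega,\mathbb{R}^d)$; stacking the $n$ components at the end recovers $\mu \in L^2_{\mathcal{F}}([0,T]\times\Omega,\mathbb{R}^{n\times d})$ and $N \in \mathbb{R}^n$. The uniform $L^2$-bound guarantees that the terminal value $x(T)$ lies in $L^2_{\mathcal{F}_T}(\Omega,\mathbb{R})$, while the martingale property applied with terminal time $T$ gives $x(t) = \mathbb{E}[ x(T) \mid \mathcal{F}_t ]$ for every $t \in [0,T]$. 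It therefore suffices to represent $x(T)$ as $N + \int_0^T \mu(s)\,\mathrm{d}W_s$: taking conditional expectations with respect to $\mathcal{F}_t$ and using that the Itô integral is a martingale (as recalled before the statement) then propagates the identity to all $t$.

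I would then introduce the linear subspace $H \subset L^2_{\mathcal{F}_T}(\Omega,\mathbb{R})$ formed by all terminal values $c + \int_0^T \mu(s)\,\mathrm{d}W_s$ with $c \in \mathbb{R}$ and $\mu \in L^2_{\mathcal{F}}([0,T]\times\Omega,\mathbb{R}^d)$. By the Itô isometry, the map $\mu \mapsto \int_0^T \mu\,\mathrm{d}W_s$ is an isometry, so $H$ is a closed subspace, and the theorem reduces to showing $H = L^2_{\mathcal{F}_T}(\Omega,\mathbb{R})$. Equivalently, I would prove that the orthogonal complement $H^{\perp}$ is trivial.

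The crux of the argument is to exhibit a total family lying in $H$. For any bounded deterministic map $h : [0,T] \to \mathbb{R}^d$, the exponential $\mathcal{E}^h_t \triangleq \exp\!\big( \int_0^t h(s)\,\mathrm{d}W_s - \tfrac{1}{2}\int_0^t \|h(s)\|^2\,\mathrm{d}s \big)$ solves $\mathrm{d}\mathcal{E}^h_t = \mathcal{E}^h_t\, h(t)\,\mathrm{d}W_t$ by Itô's formula, so that $\mathcal{E}^h_T \in H$. The main obstacle is to show that these exponentials span a dense subspace, and hence that $H^{\perp} = \{0\}$. Let $Z \in H^{\perp}$; since the deterministic factor $\exp(-\tfrac12\int_0^T\|h\|^2)$ is immaterial, orthogonality gives $\mathbb{E}[ Z \exp(\int_0^T h(s)\,\mathrm{d}W_s) ] = 0$ for all such $h$. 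Specializing to $h$ piecewise constant on a partition $0 = t_0 < \dots < t_k = T$ yields $\mathbb{E}[ Z \exp( \sum_{j=1}^k \theta_j \cdot (W_{t_j} - W_{t_{j-1}}) ) ] = 0$ for all $\theta_1,\dots,\theta_k \in \mathbb{R}^d$. As Brownian increments have all exponential moments and $Z \in L^2$, this is the vanishing of the two-sided Laplace transform of the finite signed measure $A \mapsto \mathbb{E}[ Z\, \mathbf{1}_{(W_{t_1},\dots,W_{t_k}) \in A}]$, whence $\mathbb{E}[ Z \mid \sigma(W_{t_1},\dots,W_{t_k})] = 0$.

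Finally, letting the mesh of the partitions tend to zero along a refining sequence, the generated $\sigma$-algebras increase to $\mathcal{F}_T = \sigma(W_s : s \in [0,T])$, and the martingale convergence theorem gives $Z = \mathbb{E}[ Z \mid \mathcal{F}_T] = 0$. Thus $H^{\perp} = \{0\}$, so $H = L^2_{\mathcal{F}_T}(\Omega,\mathbb{R})$ and in particular $x(T) \in H$, which furnishes $N$ and $\mu$; the reduction of the first step then upgrades this to the representation $x(t) = N + \int_0^t \mu(s)\,\mathrm{d}W_s$ valid for every $t \in [0,T]$, as desired.
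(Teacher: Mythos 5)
Your proof is correct and is essentially the classical argument for this result, which the paper does not prove itself but simply recalls with a citation to \cite{LeGall2016} (Theorem 5.18); the reduction to representing $x(T)$, the closedness of the representable subspace via the It\^o isometry, and the totality of stochastic exponentials over piecewise-constant integrands are exactly the steps of that standard proof. The only cosmetic difference is that you work with real exponentials and injectivity of the two-sided Laplace transform of a finite signed measure, where the usual textbook treatment uses complex exponentials and the Fourier transform, and both are justified here since $Z\in L^2$ and Gaussian vectors have exponential moments of all orders.
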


In this article, we will study risk-averse stochastic optimal control problems, which involve the following class of functionals called \textit{finite coherent risk measures}, whose properties are extensively studied in \cite{Shapiro2021}.

\begin{Def}[Finite coherent risk measure]
\label{def:Risk}
A mapping $\rho : L^1_{\mathcal{S}}(\Omega,\mathbb{R}) \rightarrow \mathbb{R}$ is called a \textnormal{finite coherent risk measure} if it satisfies the following properties. 
\begin{enumerate}
    \item (Convexity) For every $Z_1 , Z_2 \in L^1_{\mathcal{S}}(\Omega,\mathbb{R})$ and all $\lambda \in [0,1]$, it holds 
    $$
    \rho(\lambda Z_1 + (1 - \lambda) Z_2) \le \lambda \rho(Z_1) + (1 - \lambda) \rho(Z_2).
    $$
    \item (Monotonicity) If $Z_1 , Z_2 \in L^1_{\mathcal{S}}(\Omega,\mathbb{R})$ are such that $Z_1 \le Z_2$, then 
    \begin{equation*}
    \rho(Z_1) \le \rho(Z_2).
    \end{equation*}    
    \item (Translation invariance) For every $Z \in L^1_{\mathcal{S}}(\Omega,\mathbb{R})$ and $\alpha \in \mathbb{R}$, it holds 
    \begin{equation*}
    \rho(Z + \alpha) = \rho(Z) + \alpha.    
    \end{equation*}
    \item (Positive homogeneity) For every $Z \in L^1_{\mathcal{S}}(\Omega,\mathbb{R})$ and $\alpha > 0$, it holds 
    \begin{equation*}
    \rho(\alpha Z) = \alpha \rho(Z).
    \end{equation*}
\end{enumerate}
\end{Def}

As detailed throughout \cite[Chapter 6]{Shapiro2021}, coherent risk measures satisfy the following fundamental properties.
\begin{thm}[Structure of finite coherent risk measures]
\label{thm:RiskMeasures}
Given a finite coherent risk measure $\rho : L^1_{\mathcal{S}}(\Omega,\mathbb{R}) \rightarrow \mathbb{R}$, the following holds true.
\begin{enumerate}
    \item For every $Z \in L^1_{\mathcal{S}}(\Omega,\mathbb{R})$, the risk measure can be represented as 
    $$
    \rho(Z) = \underset{\xi \in \partial\rho(0)}{\sup} \ \mathbb{E}[ \xi Z ] ,
    $$
    where $\partial\rho(0)$ denotes the convex subdifferential of $\rho$ at $Z=0$.
    \item For every $Z \in L^1_{\mathcal{S}}(\Omega,\mathbb{R})$, the subdifferential $\partial\rho(Z) \subset L^{\infty}_{\mathcal{S}}(\Omega,\mathbb{R})$ is a nonempty, convex, and weakly-$^*$ compact set which can be expressed as
    $$
    \partial\rho(Z) = \underset{\xi \in \partial \rho(0)}{\arg \max} \ \mathbb{E}[ \xi Z ] .
    $$
    \item For every $Z , H \in L^1_{\mathcal{S}}(\Omega,\mathbb{R})$, the mapping $\rho$ has a sublinear directional derivative $D\rho(Z) \cdot H$ at $Z$ along $H$, which satisfies
    $$
    D\rho(Z) \cdot H = \underset{\xi \in \partial\rho(Z)}{\max} \ \mathbb{E}[ \xi H ] .
    $$
\end{enumerate}
\end{thm}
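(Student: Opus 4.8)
The plan is to recognize $\rho$ as a continuous sublinear functional on the Banach space $L^1_{\mathcal{S}}(\Omega,\mathbb{R})$ and to derive all three statements from the classical theory of Fenchel conjugacy and support functions, upon identifying the topological dual with $L^{\infty}_{\mathcal{S}}(\Omega,\mathbb{R})$ via the Riesz isomorphism recalled above. Throughout, I would write $\rho^*(\xi) = \sup_{Z} \big( \mathbb{E}[\xi Z] - \rho(Z) \big)$ for the convex conjugate of $\rho$ on $L^{\infty}_{\mathcal{S}}(\Omega,\mathbb{R})$, and exploit that convexity together with positive homogeneity makes $\rho$ \emph{sublinear}, while positive homogeneity forces $\rho(0)=0$.

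First I would establish that $\rho$ is continuous with respect to the $L^1$-topology, which is the single nontrivial analytic input and, I expect, the main obstacle. Monotonicity and translation invariance immediately yield the $L^{\infty}$-Lipschitz bound $|\rho(Z_1) - \rho(Z_2)| \le \|Z_1 - Z_2\|_{\infty}$, but upgrading this to genuine $L^1$-continuity requires the monotonicity-based argument underlying the extended Namioka--Klee theorem for finite convex functionals on $L^p$ spaces (see \cite{Shapiro2021}). Granting continuity --- hence lower semicontinuity --- of the proper convex function $\rho$, the Fenchel--Moreau theorem gives $\rho = \rho^{**}$, that is $\rho(Z) = \sup_{\xi \in L^{\infty}_{\mathcal{S}}} \big( \mathbb{E}[\xi Z] - \rho^*(\xi) \big)$. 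The positive homogeneity of $\rho$ then forces $\rho^*$ to take only the values $0$ and $+\infty$: for $\xi \notin \partial\rho(0)$ there is $Z_0$ with $\mathbb{E}[\xi Z_0] > \rho(Z_0)$, and rescaling $Z_0$ sends the conjugate to $+\infty$, whereas on $\partial\rho(0) = \{ \xi : \mathbb{E}[\xi Z] \le \rho(Z) \text{ for all } Z \}$ one has $\rho^* \equiv 0$. This collapses the biconjugate to $\rho(Z) = \sup_{\xi \in \partial\rho(0)} \mathbb{E}[\xi Z]$, proving item (1).

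For item (2), I would first note that $\partial\rho(0)$ is weakly-$^*$ closed, being the intersection of the half-spaces $\{\xi : \mathbb{E}[\xi Z] \le \rho(Z)\}$ over $Z \in L^1_{\mathcal{S}}$, and norm-bounded in $L^{\infty}_{\mathcal{S}}$ since the continuous convex function $\rho$ is locally Lipschitz near the origin; Banach--Alaoglu then yields weak-$^*$ compactness. The identity $\partial\rho(Z) = \{\xi \in \partial\rho(0) : \mathbb{E}[\xi Z] = \rho(Z)\}$ for sublinear functions follows directly from the subgradient inequality tested at $Y=0$ and $Y=2Z$ together with positive homogeneity, and identifies $\partial\rho(Z)$ with $\arg\max_{\xi \in \partial\rho(0)} \mathbb{E}[\xi Z]$. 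Nonemptiness is then the attainment of the supremum of the weakly-$^*$ continuous functional $\xi \mapsto \mathbb{E}[\xi Z]$ --- continuous precisely because $Z \in L^1_{\mathcal{S}}$ --- over the weakly-$^*$ compact set $\partial\rho(0)$; convexity is inherited from maximizing a linear functional over a convex set; and weak-$^*$ compactness follows since the argmax is a weakly-$^*$ closed subset of $\partial\rho(0)$.

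Finally, for item (3), the difference quotients $t \mapsto \big( \rho(Z + tH) - \rho(Z) \big)/t$ are nondecreasing in $t>0$ by convexity and bounded below by the Lipschitz estimate, so the one-sided directional derivative $D\rho(Z) \cdot H$ exists; its sublinearity in $H$ is the standard sublinearity of directional derivatives of convex functions. The formula $D\rho(Z) \cdot H = \max_{\xi \in \partial\rho(Z)} \mathbb{E}[\xi H]$ is then the classical statement that the directional derivative of a continuous convex function is the support function of its weakly-$^*$ compact subdifferential, the maximum being attained by the same Banach--Alaoglu argument as above. I emphasize that, apart from the $L^1$-continuity of $\rho$, every ingredient is routine convex duality; working on $L^1_{\mathcal{S}}$ rather than $L^{\infty}_{\mathcal{S}}$ is what guarantees, via the clean identification $(L^1_{\mathcal{S}})^* \cong L^{\infty}_{\mathcal{S}}$, that the representing functionals $\xi$ are genuine random variables and not merely finitely additive measures.
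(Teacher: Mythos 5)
Your proposal is correct and follows essentially the same route as the source the paper defers to: the paper gives no proof of this theorem, citing \cite[Chapter 6]{Shapiro2021}, where exactly this argument appears — $L^1$-continuity of finite monotone convex functionals via the extended Namioka--Klee theorem, Fenchel--Moreau biconjugation with positive homogeneity collapsing $\rho^*$ to the indicator of $\partial\rho(0)$, Banach--Alaoglu for weak-$^*$ compactness, and the support-function formula for the directional derivative. You correctly isolate the $L^1$-continuity as the only non-routine ingredient; the rest is standard convex duality, as you say.
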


As previously mentioned in the introduction, coherent risk measures appear very naturally in a broad range of stochastic decision problems, with their most common representative being the \textit{Average Value-at-Risk}, see e.g. \cite[Section 6.2.4]{Shapiro2021} and the examples of Section \ref{sec:Example} below.


\subsection{Stochastic Differential Equations}

In what follows, we detail the setting in which we study controlled stochastic dynamics. Let $U \subset \mathbb{R}^m$ be a compact set representing admissible control values, and consider a stochastic drift mapping $f : [0,T] \times \Omega \times \mathbb{R}^n \times U \rightarrow \mathbb{R}^n$ as well as a stochastic diffusion mapping $\sigma : [0,T] \times \Omega \times \mathbb{R}^n \times U \rightarrow \mathbb{R}^{n \times d}$ which satisfy the following series of standard assumptions (see e.g. \cite[Chapter 3.3]{Yong1999}).

\vspace{10pt}

\begin{flushleft}
\begin{taggedhypmas}{(MSD)} \hfill
\label{hyp:MAS}
\begin{enumerate}
    \item[$(i)$] The applications
    \begin{equation*}
    f(\cdot,\cdot,x,u) : [0,T]\times\Omega \rightarrow \mathbb{R}^n , \quad \sigma(\cdot,\cdot,x,u) : [0,T]\times\Omega \rightarrow \mathbb{R}^{n \times d} ,
    \end{equation*}
    are progressively measurable for every $(x,u) \in \mathbb{R}^n \times U$ and the maps
    \begin{equation*}
    f(t,\omega,\cdot,\cdot) : \mathbb{R}^n \times U \rightarrow \mathbb{R}^n , \quad \sigma(t,\omega,\cdot,\cdot) : \mathbb{R}^n \times U \rightarrow \mathbb{R}^{n \times d}
    \end{equation*}
    are continuous for almost every $(t,\omega) \in [0,T]\times\Omega$.
    \item[$(ii)$] There exists a map $k \in L^2_{\mathcal{F}}([0,T]\times\Omega,\mathbb{R}_+)$ such that\,\footnote{Note that since $U\subset\mathbb{R}^m$ is compact, this assumption encompasses control-affine dynamics.}
    \begin{equation*}
    \| f(t,\omega,0,u) \| + \| \sigma(t,\omega,0,u) \| \le k(t,\omega),
    \end{equation*}
    for almost every $(t,\omega) \in [0,T] \times \Omega$ and each $u \in U$.
    \item[$(iii)$] For almost every $(t,\omega) \in [0,T] \times \Omega$ and all $u\in U$, the mappings
    \begin{equation*}
    f(t,\omega,\cdot,u) : \mathbb{R}^n \rightarrow \mathbb{R}^n , \quad \sigma(t,\omega,\cdot,u) : \mathbb{R}^n \rightarrow \mathbb{R}^{n \times d} ,
    \end{equation*}
    are Fr\'echet differentiable, and there exists a constant $L > 0$ such that
    $$
    \left\| \frac{\partial f}{\partial x}(t,\omega,x,u) \right\| + \left\| \frac{\partial \sigma}{\partial x}(t,\omega,x,u) \right\| \le L ,
    $$
    and
    $$
    \left\| \frac{\partial f}{\partial x}(t,\omega,x,u) - \frac{\partial f}{\partial x}(t,\omega,y,u) \right\| + \left\| \frac{\partial \sigma}{\partial x}(t,\omega,x,u) - \frac{\partial \sigma}{\partial x}(t,\omega,y,u) \right\| \le L \| x - y \| ,
    $$
    for almost every $(t,\omega) \in [0,T] \times \Omega$, any $u\in U$ and all $x , y \in \mathbb{R}^n$.
\end{enumerate}
\end{taggedhypmas}
\end{flushleft}

\vspace{10pt}

From now on, we fix an initial condition $x_0 \in L^2_{\mathcal{F}_0}(\Omega,\mathbb{R}^n)$. Under hypotheses \ref{hyp:MAS}
, the stochastic differential equation
\begin{equation}
\label{eq:SDE}
\tag{$\textnormal{SDE}$}
\left\{
\begin{aligned}
\mathrm{d}x(t) &  = f(t,x(t),u(t)) \mathrm{d}t + \sigma(t,x(t),u(t))  \mathrm{d}W_t , \\
x(0) & = x_0,
\end{aligned}
\right.
\end{equation}
has a unique (up to stochastic indistinguishability) solution $x_u \in C^2_{\mathcal{F}}([0,T]\times\Omega,\mathbb{R}^n)$ for every progressively measurable control $u : [0,T] \times \Omega \rightarrow U$. In the following lemma, we recall a useful estimate for this class of dynamics (see e.g. \cite[Proposition 2.1]{Libin2007}).

\begin{lem} 
\label{lemma:bound}
Let $u : [0,T] \times \Omega \rightarrow U$ be a progressively measurable control signal and suppose that assumptions \textnormal{\ref{hyp:MAS}}. 
Then, the corresponding solution $x_u \in C^2_{\mathcal{F}}([0,T]\times\Omega,\mathbb{R}^n)$ of \eqref{eq:SDE} satisfies the estimate
$$
\| x_u \|_{C^1_{\mathcal{F}}} \le C \hspace{0.05cm} \mathbb{E}\left[ \| x_0 \| + \int^T_0 \| f(s,0,u(s)) \| \, \mathrm{d}s + \left( \int^T_0 \| \sigma(s,0,u(s)) \|^2 \, \mathrm{d}s \right)^{\hspace{-0.1cm} 1/2} \, \right],
$$
where the constant $C > 0$ only depends on the magnitudes of $T$ and $L$.
\end{lem}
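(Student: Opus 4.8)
The plan is to derive the bound through the classical combination of the integral form of \eqref{eq:SDE}, the Burkholder--Davis--Gundy inequality, and Grönwall's lemma, working directly with the $C^1_{\mathcal{F}}$ norm rather than passing through $C^2_{\mathcal{F}}$. First I would write, for each $t \in [0,T]$,
\[
x_u(t) = x_0 + \int_0^t f(s,x_u(s),u(s)) \, \mathrm{d}s + \int_0^t \sigma(s,x_u(s),u(s)) \, \mathrm{d}W_s ,
\]
take Euclidean norms, pass to the supremum over $t \in [0,\tau]$ for an arbitrary $\tau \in [0,T]$, and take expectations, thereby introducing the auxiliary function $\phi(\tau) \triangleq \mathbb{E}\big[ \sup_{s \in [0,\tau]} \| x_u(s) \| \big]$. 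Observe that $\phi(T) = \| x_u \|_{C^1_{\mathcal{F}}}$ and, crucially, that $\phi(\tau) \le \phi(T) < +\infty$ because $x_u \in C^2_{\mathcal{F}}([0,T] \times \Omega , \mathbb{R}^n) \subset C^1_{\mathcal{F}}([0,T] \times \Omega , \mathbb{R}^n)$; this finiteness is what will later license an absorption argument.

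Next I would estimate the drift and diffusion contributions separately. For the drift, assumption \ref{hyp:MAS}$(iii)$ guarantees that $f(t,\omega,\cdot,u)$ is globally $L$-Lipschitz, whence $\| f(s,x_u(s),u(s)) \| \le \| f(s,0,u(s)) \| + L \| x_u(s) \|$; integrating and taking expectations produces the desired data term together with a harmless contribution $L \int_0^\tau \phi(s) \, \mathrm{d}s$. For the diffusion, I would invoke the Burkholder--Davis--Gundy inequality with $\beta = 1$ to bound $\mathbb{E}\big[ \sup_{t \le \tau} \| \int_0^t \sigma \, \mathrm{d}W_s \| \big]$ by a fixed multiple of $\mathbb{E}\big[ ( \int_0^\tau \| \sigma(s,x_u(s),u(s)) \|^2 \, \mathrm{d}s )^{1/2} \big]$, and then split off the data term $( \int_0^\tau \| \sigma(s,0,u(s)) \|^2 \, \mathrm{d}s )^{1/2}$ using the Lipschitz bound on $\sigma$ together with the subadditivity of the square root.

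The main obstacle is the residual term $\mathbb{E}\big[ ( \int_0^\tau \| x_u(s) \|^2 \, \mathrm{d}s )^{1/2} \big]$ left over from the diffusion, which is not immediately of Grönwall type because of the square root. To handle it I would write $\int_0^\tau \| x_u(s) \|^2 \, \mathrm{d}s \le ( \sup_{r \le \tau} \| x_u(r) \| ) \int_0^\tau \| x_u(s) \| \, \mathrm{d}s$, apply Cauchy--Schwarz to the resulting product, and then invoke Young's inequality $ab \le \tfrac{\varepsilon}{2} a^2 + \tfrac{1}{2\varepsilon} b^2$, obtaining a bound by $\tfrac{\varepsilon}{2} \phi(\tau) + \tfrac{1}{2\varepsilon} \int_0^\tau \phi(s) \, \mathrm{d}s$. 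Choosing $\varepsilon$ small enough that the coefficient in front of $\phi(\tau)$ is strictly less than one, I would absorb this term into the left-hand side -- which is legitimate precisely thanks to the a priori finiteness of $\phi(\tau)$ noted above -- to reach an inequality of the form $\phi(\tau) \le C_0 A + C_1 \int_0^\tau \phi(s) \, \mathrm{d}s$, where $A$ denotes the bracketed expression appearing on the right-hand side of the statement and $C_0, C_1$ depend only on $L$, $T$, and the absolute BDG constant. A concluding application of Grönwall's lemma on $[0,T]$ then yields $\phi(T) \le C_0 A \, e^{C_1 T}$, which is exactly the claimed estimate with a constant $C > 0$ depending only on the magnitudes of $T$ and $L$.
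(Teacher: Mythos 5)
Your argument is correct. Note, however, that the paper does not prove Lemma \ref{lemma:bound} at all: it is recalled as a known a priori estimate with a pointer to the literature (Proposition 2.1 of the cited reference), so there is no in-paper proof to compare against. Your derivation is the standard self-contained one, and all the delicate points are handled properly: the Lipschitz bounds on $f(t,\omega,\cdot,u)$ and $\sigma(t,\omega,\cdot,u)$ follow from the uniform derivative bound in \ref{hyp:MAS}-(iii); the Burkholder--Davis--Gundy inequality with $\beta=1$ is exactly inequality (2.1) of the paper; the square-root term $\mathbb{E}\bigl[(\int_0^\tau \|x_u(s)\|^2\,\mathrm{d}s)^{1/2}\bigr]$ is correctly converted into a Gr\"onwall-compatible form via the factorization $\int_0^\tau\|x_u\|^2\,\mathrm{d}s \le (\sup_{r\le\tau}\|x_u(r)\|)\int_0^\tau\|x_u\|\,\mathrm{d}s$ followed by Young's inequality; and, most importantly, you explicitly justify the absorption step by the a priori finiteness of $\phi(\tau)$, which is the one place where such proofs are often sloppy. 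Two cosmetic remarks: the splitting of $\bigl(\int_0^\tau\|\sigma(s,x_u(s),u(s))\|^2\,\mathrm{d}s\bigr)^{1/2}$ into the data term plus $L\bigl(\int_0^\tau\|x_u(s)\|^2\,\mathrm{d}s\bigr)^{1/2}$ is really Minkowski's inequality in $L^2(\mathrm{d}s)$ rather than ``subadditivity of the square root'' (the latter would cost an extra factor of $\sqrt{2}$, which is harmless here); and the final constant also involves the universal BDG constant $C_1$, which is absolute and therefore consistent with the claim that $C$ depends only on the magnitudes of $T$ and $L$.
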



\subsection{Set-valued Analysis}

In the sequel given a closed set $K \subset \mathbb{R}^n$, we define  its \textit{closed convex hull} by
\begin{equation}
\label{eq:ConvHull}
\overline{\textnormal{co}} K := \overline{\bigcup_{N \geq1} \bigg\{ \sum_{i=1}^N \alpha_i x_i \, : \, x_i \in K, \, \alpha_i \geq 0 \,~\text{for $i \in \{1,\dots,N\}$ and}~ \sum_{i=1}^N \alpha_i =1 \bigg\}}.
\end{equation}
If the set $K$ is convex, we shall denote its \textit{tangent cone} at some $x \in K$ by
\begin{equation}
\label{eq:TangentCone}
T_K(x) := \overline{\Big\{ v \in \mathbb{R}^n : \lim_{h \to 0^+} \tfrac{1}{h} \textnormal{dist}_K(x+ hv) =0 \Big\}} = \overline{\bigcup_{\lambda > 0} \lambda (K-x)}, 
\end{equation}
where $\textnormal{dist}_K(x) := \inf_{y \in K} \| x-y \|$ denotes the distance from a point $x \in\mathbb{R}^n$ to $K$.

We will write $F : [0,T] \times\Omega \rightrightarrows \mathbb{R}^n$ to denote a \textit{set-valued map} -- or \textit{multifunction} -- from $[0,T]\times\Omega$ into $\mathbb{R}^n$, namely a mapping valued in the subsets of $\mathbb{R}^n$. In this context, we shall say that $F$ has closed, compact or convex images if its values are closed, compact or convex sets respectively. 

\begin{Def}[Progressively measurable set-valued maps]
We say that a set-valued map $F : [0,T] \times\Omega \rightrightarrows \mathbb{R}^n$ is \textnormal{progressively measurable} if
\begin{equation*}
F^{-1}(\mathcal{O}) := \Big\{ (t,\omega) \in [0,T] \times \Omega \, : \, F(t,\omega) \cap \mathcal{O} \neq \emptyset \Big\}
\end{equation*}
is measurable with respect to the progressive $\sigma$-algebra generated by the filtration $\mathcal{F}$ on $[0,T]\times\Omega$ for every open set $\mathcal{O} \subset \mathbb{R}^n$.
\end{Def}

We recall in the following theorem a direct consequence of \cite[Theorem 8.1.3]{Aubin1990}.

\begin{thm}[Existence of progressively measurable selections]
A progressively measurable set-valued map $F : [0,T]\times \Omega \rightrightarrows \mathbb{R}^n$ with nonempty closed images admits a \textnormal{progressively measurable selection}, namely a progressively measurable function $f : [0,T]\times \Omega \to \mathbb{R}^n$ such that $f(t,\omega) \in F(t,\omega)$ for almost every $(t,\omega) \in [0,T] \times \Omega$.
\end{thm}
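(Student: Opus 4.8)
The plan is to reduce the statement to the abstract measurable selection theorem of Kuratowski and Ryll-Nardzewski, exactly as recorded in \cite[Theorem 8.1.3]{Aubin1990}, whose hypotheses involve nothing more than a measurable space as domain and a complete separable metric space as target. First I would recall that the progressive $\sigma$-algebra generated by $\mathcal{F}$ on $[0,T]\times\Omega$ --- call it $\mathcal{P}$ --- is a genuine $\sigma$-algebra, so that $\big([0,T]\times\Omega,\mathcal{P}\big)$ qualifies as an abstract measurable space. Since $\mathbb{R}^n$ is a complete separable metric space, it is admissible as the target, and the structural hypotheses of \cite[Theorem 8.1.3]{Aubin1990} are thereby met.

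Next I would observe that the notion of progressive measurability introduced above for the multifunction $F$ is precisely the measurability requirement imposed on set-valued maps in the cited theorem: namely, that the lower inverse image $F^{-1}(\mathcal{O}) = \big\{ (t,\omega) : F(t,\omega)\cap\mathcal{O}\neq\emptyset \big\}$ of every open set $\mathcal{O}\subset\mathbb{R}^n$ belongs to $\mathcal{P}$. Combined with the standing assumption that $F$ has nonempty closed images, this places $F$ squarely within the scope of \cite[Theorem 8.1.3]{Aubin1990}, which then yields a $\mathcal{P}$-measurable map $f : [0,T]\times\Omega\to\mathbb{R}^n$ satisfying $f(t,\omega)\in F(t,\omega)$ for every $(t,\omega)$. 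By the very definition of $\mathcal{P}$, such a map is progressively measurable, and it is in particular a selection of $F$ at almost every $(t,\omega)$, which is the asserted conclusion. In fact the selection holds everywhere, so the almost-everywhere formulation is obtained a fortiori.

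The only point deserving genuine care --- and which I regard as the main, if minor, obstacle --- is to confirm that the measurability notion used in the excerpt coincides with the one underlying \cite[Theorem 8.1.3]{Aubin1990}. Indeed, set-valued measurability is variously phrased through lower inverses of open sets, upper inverses of closed sets, or the measurability of the scalar maps $(t,\omega)\mapsto\dist_{F(t,\omega)}(y)$; for a Polish target and nonempty closed images these formulations are all equivalent, so the matching is automatic once the standard equivalences collected in \cite[Chapter 8]{Aubin1990} are invoked. Note that the completeness of the filtration $\mathcal{F}$, while available here, is not actually required for this purely $\sigma$-algebraic argument.
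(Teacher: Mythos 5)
Your proposal is correct and follows exactly the route the paper intends: the statement is presented there as a direct consequence of \cite[Theorem 8.1.3]{Aubin1990} (the Kuratowski--Ryll-Nardzewski selection theorem) with no further proof, and your verification that the progressive $\sigma$-algebra makes $[0,T]\times\Omega$ an abstract measurable space, that $\mathbb{R}^n$ is Polish, and that the measurability notions coincide is precisely the content of that citation. Your closing observation that completeness is not needed here is also consistent with the paper, whose completeness caveat is raised only for the later selection results based on \cite[Corollary 8.2.13, Theorem 8.5.1, Corollary 8.5.2]{Aubin1990}, not for Theorem 8.1.3.
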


In the following definitions, we recall classical adaptations of the concepts of integral boundedness and Lipschitz regularity for progressively measurable set-valued maps with compact images. The latter of these properties is expressed in terms of the so-called \textit{Pompeiu-Hausdorff} distance, defined by 
\begin{equation*}
d_{\mathcal{H}}(A,B) := \max\bigg\{ \sup_{x \in A} \textnormal{dist}_B(x) \, , \, \sup_{y \in B} \textnormal{dist}_A(y) \bigg\}
\end{equation*}
for any pair of compact sets $A,B \subset \mathbb{R}^n$. 

\begin{Def}[Integrably bounded multifunction]
A set-valued mapping $F : [0,T] \times \Omega \times \mathbb{R}^n \rightrightarrows \mathbb{R}^n$ with nonempty compact images is \textnormal{integrably bounded} if 
\begin{equation*}
F(t,\omega,x) \subset k(t,\omega) \mathbb{B}
\end{equation*}
for almost every $(t,\omega) \in[0,T] \times \Omega$ and all $x \in\mathbb{R}^n$, where $k \in L^2_{\mathcal{F}}([0,T]\times\Omega,\mathbb{R}_+)$ and $\mathbb{B} \subset\mathbb{R}^n$ denotes the closed unit ball centered at the origin.
\end{Def}

\begin{Def}[Progressively measurable-Lipschitz multifunction]
We say that a set-valued mapping $F : [0,T] \times \Omega \times \mathbb{R}^n \rightrightarrows \mathbb{R}^n$ with nonemtpy compact images is \textnormal{progressively measurable-Lipschitz} if
\begin{equation*}
(t,\omega) \in [0,T] \times \Omega \rightrightarrows F(t,\omega,x) \in\mathbb{R}^n, 
\end{equation*}
is progressively measurable for each $x \in \mathbb{R}^n$, and there exists $L>0$ such that
\begin{equation*}
d_{\mathcal{H}}(F(t,\omega,x),F(t,\omega,y)) \leq L |x-y|, 
\end{equation*}
for almost every $(t,\omega) \in [0,T] \times \Omega$ and all $x,y \in \mathbb{R}^n$.
\end{Def}

We recall in the following theorem some classical adaptations of \cite[Corollary 8.2.13, Theorem 8.5.1, Corollary 8.5.2]{Aubin1990}, which ensure the existence of progressively measurable selections for various classes of set-valued mappings. 

\begin{thm}[Some progressively measurable selection results]
\label{thm:Selections}
Let $F : [0,T] \times \Omega \times \mathbb{R}^n \rightrightarrows \mathbb{R}^n$ be progressively measurable-Lipschitz with nonempty compact images, fix $x,y \in C^{\beta}_{\mathcal{F}}([0,T] \times\Omega,\mathbb{R}^n)$ and $l \in  L^{\beta}_{\mathcal{F}}([0,T]\times\Omega,\mathbb{R}_+)$. Then, the following holds. 
\begin{enumerate}
    \item[$(a)$] The set-valued mapping 
    $$
    (t,\omega) \in [0,T] \times \Omega \rightrightarrows F(t,\omega,x(t,\omega)) \subset\mathbb{R}^n
    $$ is progressively measurable and admits a progressively measurable selection.
    \item[$(b)$] Let $(t,\omega) \in [0,T] \times \Omega \mapsto f(t,\omega)\in F(t,\omega,x(t,\omega))$ be a progressively measurable selection such that $f \in L^{\beta}_{\mathcal{F}}([0,T]\times\Omega,\mathbb{R}^n)$. Then the set-valued mapping
    $$
    (t,\omega) \in [0,T] \times \Omega \rightrightarrows T_{\overline{\textnormal{co}}F(t,\omega,x(t,\omega)))}(f(t,\omega)) \subset \mathbb{R}^n
    $$
    is progressively measurable and admits selections in $L^{\beta}_{\mathcal{F}}([0,T]\times\Omega,\mathbb{R}^n)$.
    \item[$(c)$] If for almost every $(t,\omega) \in [0,T] \times \Omega$ the sets
    $$
    F(t,\omega,x(t,\omega)) \cap \Big\{ f \in \mathbb{R}^n \, : \,  \| f - y(t,\omega) \| \le l(t,\omega) \Big\}
    $$
    are nonempty, then there exists a progressively measurable selection 
    \begin{equation*}
    (t,\omega) \mapsto f(t,\omega) \in F(t,\omega,x(t,\omega))
    \end{equation*}
    such that $ \|f(t,\omega) - y(t,\omega)\| \le l(t,\omega)$.
    \end{enumerate}
\end{thm}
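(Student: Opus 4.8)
The plan is to obtain all three assertions by transcribing the corresponding measurable selection results of \cite[Corollary 8.2.13, Theorem 8.5.1, Corollary 8.5.2]{Aubin1990} to the progressive $\sigma$-algebra. Writing $\mathcal{P}$ for the progressive $\sigma$-algebra generated by $\mathcal{F}$ on $[0,T] \times \Omega$, I would regard $([0,T] \times \Omega, \mathcal{P})$ as an abstract measurable space and invoke those classical theorems with $\mathcal{P}$ in the role of the ambient $\sigma$-algebra. The completeness of $\mathcal{P}$, inherited from the completeness of the filtration $\mathcal{F}$, is precisely what guarantees that the measurability hypotheses of the cited statements are met, so that their conclusions — including the extraction of selections via the selection theorem recalled above — carry over verbatim.

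For item $(a)$, the main step is to show that the composed multifunction $(t,\omega) \rightrightarrows F(t,\omega,x(t,\omega))$ is progressively measurable. First I would note that, by the progressive measurable-Lipschitz assumption, $(t,\omega,\xi) \mapsto F(t,\omega,\xi)$ is a Carath\'eodory multifunction, namely $\mathcal{P}$-measurable in $(t,\omega)$ for each fixed $\xi$ and $L$-Lipschitz — hence continuous — in $\xi$ for the Pompeiu-Hausdorff distance, uniformly in $(t,\omega)$. Approximating the progressively measurable process $x$ pointwise by a sequence of $\mathcal{P}$-simple functions $(x_k)_{k \geq 1}$, each composite $(t,\omega) \rightrightarrows F(t,\omega,x_k(t,\omega))$ is progressively measurable by piecewise construction, while the Lipschitz bound gives $d_{\mathcal{H}}\big(F(t,\omega,x_k(t,\omega)),F(t,\omega,x(t,\omega))\big) \leq L \| x_k(t,\omega) - x(t,\omega)\| \to 0$. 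Since progressive measurability of compact-valued multifunctions is stable under pointwise Pompeiu-Hausdorff limits, the composite is progressively measurable; it has nonempty closed images, so the selection theorem recalled above furnishes a progressively measurable selection.

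Items $(b)$ and $(c)$ then reduce to checking that progressive measurability survives the two set-valued operations at play, after which the same selection theorem applies. For $(b)$, the closed convex hull $(t,\omega) \rightrightarrows \overline{\textnormal{co}} F(t,\omega,x(t,\omega))$ stays progressively measurable and compact-valued, and using the representation $T_{\overline{\textnormal{co}}F}(f) = \overline{\bigcup_{\lambda > 0} \lambda(\overline{\textnormal{co}}F - f)}$ with the union restricted to $\lambda \in \mathbb{Q}_{>0}$, the tangent-cone multifunction appears as the closure of a countable union of progressively measurable multifunctions, hence is itself progressively measurable — this is the content of \cite[Theorem 8.5.1]{Aubin1990} transcribed to $\mathcal{P}$. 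The existence of selections in $L^{\beta}_{\mathcal{F}}$ is then immediate, since $f(t,\omega) \in \overline{\textnormal{co}} F(t,\omega,x(t,\omega))$ forces $0 \in T_{\overline{\textnormal{co}}F(t,\omega,x(t,\omega))}(f(t,\omega))$, so that the null process is an admissible $L^{\beta}$ selection. For $(c)$, I would form the intersection multifunction $(t,\omega) \rightrightarrows F(t,\omega,x(t,\omega)) \cap \{ f \in \mathbb{R}^n : \|f - y(t,\omega)\| \leq l(t,\omega) \}$, which is progressively measurable as the intersection of the compact-valued multifunction of item $(a)$ with the progressively measurable closed-ball multifunction determined by the processes $y$ and $l$, and which has nonempty images by assumption; the selection theorem then yields a progressively measurable selection, and the constraint $\|f(t,\omega) - y(t,\omega)\| \leq l(t,\omega)$ holds by construction.

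The hard part will be the measurability bookkeeping in items $(b)$ and $(c)$: one must ensure that the closure operations entering the tangent cone, and the intersection with the ball, do not destroy progressive measurability. This is handled by expressing each operation as the closure of a countable union (respectively, intersection) of progressively measurable multifunctions and invoking the stability of progressive measurability under such countable operations and under closure of images — which are exactly the abstract measure-space arguments of \cite[Theorem 8.5.1, Corollary 8.5.2]{Aubin1990}, and which remain valid once $\mathcal{P}$ is taken as the ambient complete $\sigma$-algebra.
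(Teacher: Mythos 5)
Your argument hinges on the assertion that the progressive $\sigma$-algebra $\mathcal{P}$ on $[0,T]\times\Omega$ is complete because the filtration $\mathcal{F}$ is complete, and that this completeness is ``precisely what guarantees'' that the hypotheses of \cite[Corollary 8.2.13, Theorem 8.5.1, Corollary 8.5.2]{Aubin1990} are met. That assertion is false, and it is exactly the difficulty the paper singles out in the remark following the theorem: completeness of each $\mathcal{F}_t$ (i.e.\ each $\mathcal{F}_t$ containing the $\mathbb{P}$-null sets of $\mathcal{G}$) does not make the progressive $\sigma$-algebra complete with respect to $\mathrm{d}t\otimes\mathbb{P}$ --- product-type $\sigma$-algebras built from complete factors are in general not complete, just as $\mathcal{B}([0,T])\otimes\mathcal{G}$ itself is not. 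Since several of the cited results (in particular the composition, intersection and Filippov-type selection statements underlying items $(a)$ and $(c)$) are stated for complete $\sigma$-finite measure spaces and genuinely use completeness through projection-theorem arguments, you cannot ``invoke those classical theorems with $\mathcal{P}$ in the role of the ambient $\sigma$-algebra'' as you propose. The correct route, which the paper describes, is to apply the Aubin--Frankowska results on the measure-theoretic completion of the progressive $\sigma$-algebra, obtain selections measurable for the completed $\sigma$-algebra, and then modify them on a $\mathrm{d}t\otimes\mathbb{P}$-negligible set so that they become genuinely progressively measurable; one then checks that the selection and estimate properties, all of which are only claimed almost everywhere, survive this modification.

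The remainder of your outline is largely sound and close in spirit to what the cited results deliver: the simple-function approximation plus stability of compact-valued measurability under pointwise Pompeiu--Hausdorff limits for $(a)$, the representation of the tangent cone as the closure of a countable union over rational dilations for $(b)$, and the intersection-with-a-measurable-ball construction for $(c)$ are all the right ingredients. The observation in $(b)$ that the null selection always lies in the tangent cone is also a clean way to guarantee an $L^{\beta}_{\mathcal{F}}$ selection. But each of these steps must either be rerun by hand on the non-complete space $([0,T]\times\Omega,\mathcal{P})$ --- which is feasible for $(a)$ and $(b)$ but not for the intersection result in $(c)$ as usually proved --- or routed through the completion-and-modification argument above. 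As written, the proposal papers over the one technical obstruction that the statement actually presents.
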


\begin{rmk}[Concerning progressively measurable selections]
Observe that since $\mathcal{B}([0,T])\otimes\mathcal{G}$ endowed with the progressive $\sigma$-algebra induced by the filtration $\mathcal{F}$ is not a complete measure space, one cannot directly apply \cite[Corollary 8.2.13, Theorem 8.5.1 and Corollary 8.5.2]{Aubin1990} to derive Theorem \ref{thm:Selections}. To overcome this difficulty, one needs first to apply these latter results to the measure-theoretic completion $\overline{\mathcal{B}([0,T])\otimes\mathcal{G}}$ to obtain measurable selections, and modify them on a negligible set so that they become measurable in $\mathcal{B}([0,T])\otimes\mathcal{G}$ (see also \cite[Theorem 4.1]{Frankowska2017}).
\end{rmk}

\begin{rmk}[Shorter notation for stochastic processes]
\label{rmk:Condensed}
For the sake of conciseness, we will often drop the dependence with respect to the parameter $\omega\in\Omega$ and write $t \in [0,T] \mapsto f(t) \in F(t,x(t))$ for progressively measurable selections and maps.
\end{rmk} 

We end this preliminary section by recalling an adaptation of a general minimax theorem due to Sion \cite{Sion1958}.

\begin{thm}[Sion's minimax theorem]
\label{thm:Sion}
Let $X,Y$ be two convex subsets of Hausdorff topological spaces with $X$ being compact, and consider a continuous map $\varphi : X \times Y \to \mathbb{R}$ that is such that 
\begin{equation*}
x \in X \mapsto \varphi(x,y) \in\mathbb{R} \quad \text{is convex}
\end{equation*} 
for each $y \in Y$, and 
\begin{equation*}
y \in Y \mapsto \varphi(x,y) \in\mathbb{R} \quad \text{is concave}
\end{equation*}
for each $x \in X$. Then, it holds that 
\begin{equation*}
\min_{x \in X} \sup_{y \in Y} \varphi(x,y) = \sup_{y \in Y} \min_{x \in X} \varphi(x,y).
\end{equation*}
\end{thm}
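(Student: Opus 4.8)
The plan is to prove the two inequalities $\sup_{y}\min_{x}\varphi \le \min_{x}\sup_{y}\varphi$ and $\min_{x}\sup_{y}\varphi \le \sup_{y}\min_{x}\varphi$ separately, the first being routine and the second carrying all the difficulty. Before starting, I would record that both extremal operations are well-posed: for fixed $y$ the map $x \mapsto \varphi(x,y)$ is continuous on the compact set $X$, so its minimum is attained, while $x \mapsto \sup_{y}\varphi(x,y)$ is lower semicontinuous as a supremum of continuous functions and hence also attains its infimum on $X$, which is what justifies writing $\min_{x}$ rather than $\inf_{x}$. The weak-duality inequality is then immediate and uses neither convexity nor compactness: for every pair $(x,y)$ one has $\min_{x'}\varphi(x',y)\le\varphi(x,y)\le\sup_{y'}\varphi(x,y')$, so taking the supremum over $y$ in the leftmost term and the minimum over $x$ in the rightmost term yields $\sup_{y}\min_{x}\varphi \le \min_{x}\sup_{y}\varphi$.

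The whole difficulty therefore lies in the reverse inequality $V := \min_{x}\sup_{y}\varphi \le \sup_{y}\min_{x}\varphi =: v$. I would argue by a reduction to finite subsets of $Y$: fixing an arbitrary real number $c > v$, it suffices to produce some $x^{*}\in X$ with $\varphi(x^{*},y)\le c$ for all $y\in Y$, since this gives $V \le \sup_{y}\varphi(x^{*},y)\le c$, and letting $c \downarrow v$ then delivers $V \le v$. To locate such an $x^{*}$, consider for each $y \in Y$ the sublevel set
\begin{equation*}
C_{y} := \big\{ x \in X : \varphi(x,y) \le c \big\},
\end{equation*}
which is closed by continuity, convex because $x \mapsto \varphi(x,y)$ is convex, and compact as a closed subset of $X$. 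The desired $x^{*}$ is precisely a point of $\bigcap_{y\in Y} C_{y}$, so by the finite intersection property of the compact family $\{C_{y}\}_{y\in Y}$ it is enough to show that every finite subfamily has nonempty intersection, i.e. that for each finite set $\{y_{1},\dots,y_{n}\}\subset Y$ there is an $x$ with $\max_{1\le i \le n}\varphi(x,y_{i})\le c$, equivalently $\min_{x}\max_{i}\varphi(x,y_{i}) \le c$.

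The remaining finite minimax inequality $\min_{x}\max_{i}\varphi(x,y_{i})\le v < c$ is the genuine crux, and I expect it to be the main obstacle. I would establish it by induction on the number $n$ of points, the case $n=1$ being just $\min_{x}\varphi(x,y_{1})\le v$. The heart of the induction is a two-point merging step: given $y_{1},y_{2}$, one replaces them by a single well-chosen point $y_{\lambda}=\lambda y_{1}+(1-\lambda)y_{2}$ on the connecting segment so as to reduce a collection of $n$ points to one of $n-1$ points without increasing the min-max value, after which the inductive hypothesis applies. Both structural assumptions enter essentially in proving this step: convexity of $\varphi$ in $x$ keeps the sublevel sets $C_{y}$ convex, while concavity of $\varphi$ in $y$, together with the continuity of $\lambda\mapsto\min_{x}\varphi(x,y_{\lambda})$ on $[0,1]$ (a consequence of joint continuity and compactness of $X$), powers an intermediate-value and connectedness argument on the segment that selects the correct $\lambda$ and forces a contradiction with $c>v$. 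This connectedness mechanism is exactly what allows the argument to dispense with any hyperplane-separation (Hahn--Banach) step, and hence to hold under the stated Hausdorff hypothesis alone.

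Finally, I would note that in the locally convex settings relevant to the applications of the present paper, where $X$ and $Y$ live in dual pairs of Lebesgue spaces, the two disjoint compact convex sublevel sets arising in the finite reduction can instead be strictly separated by a continuous linear functional, yielding a shorter route to the same finite inequality; the inductive connectedness argument is, however, the one that proves the theorem in the full generality stated here.
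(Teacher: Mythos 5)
The paper offers no proof of Theorem \ref{thm:Sion} at all: it is recalled as a known result with a citation to Sion's 1958 article, so there is no internal argument to compare yours against. Your plan is the classical elementary proof (Sion's original argument as streamlined by Komiya): weak duality; reduction of the hard inequality, for a fixed level $c>v$, to finite subfamilies of the compact convex sublevel sets $C_y$ via the finite intersection property on $X$; and a finite minimax lemma proved by induction with a connectedness argument along segments in $Y$. This is the right route for the statement as given, and you are correct that it avoids any Hahn--Banach separation, which matters because $Y$ is not assumed compact and the ambient spaces are only assumed Hausdorff (implicitly Hausdorff topological \emph{vector} spaces, as is needed for $\lambda\mapsto\lambda y_1+(1-\lambda)y_2$ to be a continuous path and for convexity to be meaningful -- a sloppiness of the paper's statement rather than of your proof).

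One step of your sketch, however, would fail if implemented as written. You describe the inductive step as replacing $y_1,y_2$ by a single point $y_\lambda$ on their segment ``so as to reduce a collection of $n$ points to one of $n-1$ points without increasing the min-max value''. The two-point lemma applied to $y_1,y_2$ over all of $X$ only yields $y_\lambda$ with $\min_{x\in X}\varphi(x,y_\lambda)$ close to $\min_{x\in X}\max\{\varphi(x,y_1),\varphi(x,y_2)\}$, and this pairwise quantity can be much smaller than $\min_{x}\max_{1\le i\le n}\varphi(x,y_i)$ -- indeed it can be $\le v$ -- so the merged family need not retain the level $c$ you are trying to protect. The correct induction first restricts to the compact convex set $X':=\{x\in X:\varphi(x,y_n)\le\beta\}$ for a suitable $\beta\in(v,c)$, applies the inductive hypothesis to $y_1,\dots,y_{n-1}$ \emph{over $X'$} to produce a single point $y'$ in their convex hull, and only then invokes the two-point connectedness lemma on the pair $(y',y_n)$ over all of $X$. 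With that correction the induction closes; the remainder of your outline (attainment of the minima, weak duality, the finite-intersection reduction, and the respective roles of convexity in $x$ and concavity in $y$) is sound.
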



\subsection{Stochastic Differential Inclusions}

In this section, we recollect some facts concerning set-valued stochastic dynamics. Given a progressively measurable-Lipschitz set-valued map $F : [0,T]\times\Omega\times\mathbb{R}^n \rightrightarrows \mathbb{R}^{n + d \times n}$ with nonempty compact images, we say that $x \in C^2_{\mathcal{F}}([0,T] \times \Omega,\mathbb{R}^n)$ solves the \textit{stochastic differential inclusion}
\begin{equation}
\tag{$\textnormal{SDI}$}
\label{eq:SDI}
\left\{
\begin{aligned}
\mathrm{d} x(t) & \in F(t,x(t)) \, \mathrm{d}(\lambda \times W)_t, \\
x(0) & = x_0, 
\end{aligned}
\right.
\end{equation}
if there exists a progressively measurable selection $t \in [0,T] \rightrightarrows (f(t),\sigma(t)) \in F(t,x(t))$ such that 
\begin{equation}
\label{eq:SDEDef}
\left\{
\begin{aligned}
x(t) & = f(t) \mathrm{d} t + \sigma(t) \mathrm{d}W_t, \\
x(0) & = x_0.
\end{aligned}
\right.
\end{equation}
As for deterministic differential inclusion, this class of dynamics enjoys an existence result ``à la Filippov'', which incorporates handy a priori distance estimates with respect to a given process. This is the object of the following theorem, whose proof can be established up to a small variation of the arguments proposed in \cite{DaPrato1994}.   

\begin{thm}[Filippov estimates] 
\label{theo:Filippov}
Let $F : [0,T] \times \Omega \times \mathbb{R}^n \rightrightarrows \mathbb{R}^{n + d \times n}$ be an integrably bounded and progressively measurable-Lipschitz set-valued mapping, fix $x_0,y_0 \in L^2_{\mathcal{F}_0}(\Omega,\mathbb{R}^n)$ and $(g,\zeta) \in L^2_{\mathcal{F}}([0,T]\times\Omega,\mathbb{R}^{n+n\times d})$, and consider the solution $y \in C^2_{\mathcal{F}}([0,T] \times \Omega,\mathbb{R}^n)$ of the stochastic differential equation
\begin{equation*}
\left\{
\begin{aligned}
\mathrm{d} y(t) & = g(t) \mathrm{d}t + \zeta(t) \mathrm{d} W_t, \\
y(0) & = y_0.
\end{aligned}
\right.
\end{equation*}
Moreover, suppose that the progressively measurable \textnormal{mismatch function}, defined by
\begin{equation*}
d : t \in [0,T] \mapsto \textnormal{dist}_{F(t,y(t))} \big( (g,\zeta)(t) \big) \in\mathbb{R}_+,
\end{equation*}
is an element of $L^2_{\mathcal{F}}([0,T]\times\Omega,\mathbb{R}_+)$. 

Then, there exists a solution $x \in C^2_{\mathcal{F}}([0,T] \times \Omega,\mathbb{R}^n)$ of \eqref{eq:SDI} which satisfies
\begin{equation*}
\mathbb{E} \Big[ \| x(t) - y(t) \|^2 \Big] \leq C \hspace{0.05cm} \mathbb{E} \Bigg[ \| x_0 - y_0 \|^2 + \int_0^t d^2(s) \mathrm{d}s \Bigg]
\end{equation*}
for all times $t \in[0,T]$, where the constant $C>0$ depends only on the magnitudes of the bounding map and Lipschitz constant of $F : [0,T] \times \Omega \times \mathbb{R}^d \rightrightarrows \mathbb{R}^{n + d \times n}$.
\end{thm}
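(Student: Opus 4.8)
The plan is to construct a solution by a Picard--Filippov successive approximation scheme, using the measurable selection result of Theorem \ref{thm:Selections}$(c)$ to build a sequence of processes whose velocities live in the graph of $F$ and drift progressively closer to one another, and then to show that this sequence converges in $C^2_{\mathcal{F}}([0,T]\times\Omega,\mathbb{R}^n)$ to the desired solution while keeping track of the distance to $y$. I would initialise the scheme with $x_0 := y$ and velocity $(f_0,\sigma_0) := (g,\zeta)$. For the first step, the sets $F(t,y(t)) \cap \{ w : \|w - (g,\zeta)(t)\| \le d(t) \}$ are nonempty by the very definition of the mismatch function $d$, so Theorem \ref{thm:Selections}$(c)$ yields a progressively measurable selection $(f_1,\sigma_1)(t) \in F(t,y(t))$ with $\|(f_1,\sigma_1)(t) - (g,\zeta)(t)\| \le d(t)$. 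Defining $x_1$ as the solution of the linear SDE with drift $f_1$ and diffusion $\sigma_1$ started at $x_0$, writing the difference $x_1-y$ as a sum of a Lebesgue and an It\^o integral, and applying the Burkholder--Davis--Gundy inequality together with Cauchy--Schwarz, I obtain
\begin{equation*}
\mathbb{E}\Big[ \sup_{s \le t} \|x_1(s) - y(s)\|^2 \Big] \le C \, \mathbb{E}\Big[ \|x_0 - y_0\|^2 + \int_0^t d^2(s) \, \mathrm{d}s \Big].
\end{equation*}

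Proceeding inductively, once $x_k$ with velocity $(f_k,\sigma_k)(t) \in F(t,x_{k-1}(t))$ has been built, the progressively measurable--Lipschitz property gives
\begin{equation*}
\dist_{F(t,x_k(t))}\big( (f_k,\sigma_k)(t) \big) \le d_{\mathcal{H}}\big( F(t,x_{k-1}(t)), F(t,x_k(t)) \big) \le L \, \|x_k(t) - x_{k-1}(t)\|,
\end{equation*}
so the relevant intersection sets are again nonempty and Theorem \ref{thm:Selections}$(c)$ produces a selection $(f_{k+1},\sigma_{k+1})(t) \in F(t,x_k(t))$ satisfying $\|(f_{k+1},\sigma_{k+1})(t) - (f_k,\sigma_k)(t)\| \le L \|x_k(t) - x_{k-1}(t)\|$, from which $x_{k+1}$ is defined by integration.

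Given these increments, I would run the Burkholder--Davis--Gundy plus Gr\"onwall argument on $m_k(t) := \mathbb{E}[\sup_{s \le t} \|x_{k+1}(s) - x_k(s)\|^2]$, obtaining a recursion of the form $m_k(t) \le C L^2 \int_0^t m_{k-1}(s)\,\mathrm{d}s$ which iterates into a factorial bound $m_k(t) \lesssim m_0(T)\,(CL^2 t)^k/k!$. Consequently the increments are summable, $(x_k)$ is Cauchy in $C^2_{\mathcal{F}}$ and converges to some $x$, while the velocities $(f_k,\sigma_k)$ converge in $L^2_{\mathcal{F}}$ to a limit $(f,\sigma)$. Closedness of the images of $F$ together with its Hausdorff-Lipschitz continuity (so that $F(t,x_{k-1}(t)) \to F(t,x(t))$) ensures $(f,\sigma)(t) \in F(t,x(t))$ for almost every $(t,\omega)$, whence $x$ solves \eqref{eq:SDI}. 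Summing the telescoping bound $\mathbb{E}[\|x(t)-y(t)\|^2]^{1/2} \le \sum_k m_k(t)^{1/2}$ and controlling the tail by the first increment via the factorial decay recovers the announced estimate, with a constant depending only on $T$, $L$ and the integrable bound $k$.

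The main obstacle: I expect the principal care to lie in combining the It\^o-integral estimates with the iteration uniformly in time --- the stochastic integral forces the use of the Burkholder--Davis--Gundy inequality rather than a pointwise Gr\"onwall argument, and one must verify that the resulting constant is uniform across iterates and depends only on $T$, $L$ and $k$. A secondary subtlety, already flagged in the remark following Theorem \ref{thm:Selections}, is that each selection must be genuinely progressively measurable despite the incompleteness of the underlying $\sigma$-algebra, so that the iterates remain admissible non-anticipative processes; and the passage to the limit establishing $(f,\sigma)(t) \in F(t,x(t))$ must invoke both the closed images and the Hausdorff-Lipschitz continuity of $F$.
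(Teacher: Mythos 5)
Your successive-approximation scheme --- selecting velocities in $F(t,x_k(t))$ via Theorem \ref{thm:Selections}$(c)$, iterating the Burkholder--Davis--Gundy estimate into a factorial bound, and passing to the limit using the closed images and Lipschitz continuity of $F$ --- is correct and is precisely the stochastic Filippov argument of the reference \cite{DaPrato1994} to which the paper defers for this proof. The only cosmetic remarks are that Gr\"onwall is not actually needed once the recursion $m_k(t) \le C L^2 \int_0^t m_{k-1}(s)\,\mathrm{d}s$ is iterated directly, and that your zeroth iterate $x_0 := y$ clashes notationally with the initial datum $x_0 \in L^2_{\mathcal{F}_0}(\Omega,\mathbb{R}^n)$.
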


In the sequel given an integrably bounded and progressively measurable-Lipschitz set-valued mapping $F : [0,T]\times\Omega\times\mathbb{R}^n \rightrightarrows \mathbb{R}^n$ along with a diffusion map $\sigma : [0,T] \times\Omega \times\mathbb{R}^n \to \mathbb{R}^n$ satisfying the relevant parts of Assumptions \ref{hyp:MAS}, we will also work with  stochastic differential inclusions of the form
\begin{equation}
\tag{$\textnormal{SDI'}$}
\label{eq:SDI'}
\left\{
\begin{aligned}
\mathrm{d} x(t) & \in F(t,x(t)) \, \mathrm{d}t + \sigma(t,x(t)) \mathrm{d} W_t, \\
x(0) & = x_0, 
\end{aligned}
\right.
\end{equation}
whose solutions are the processes $x \in C^2_{\mathcal{F}}([0,T]\times\Omega,\mathbb{R}^n)$ which solve \eqref{eq:SDEDef} for some progressively measurable selection $t \in[0,T] \mapsto f(t) \in F(t,x(t))$. Below, we recall a stochastic version of the well-known \textit{relaxation theorem} for this class of dynamics.

\begin{thm}[Relaxation] \label{theo:Relaxation}
Let $F : [0,T]\times\Omega\times\mathbb{R}^n \rightrightarrows\mathbb{R}^n$ and $\sigma:[0,T] \times \Omega\times\mathbb{R}^n \to\mathbb{R}^{d \times n}$ be integrably bounded and progressively measurable Lipschtz, fix $x_0 \in L^2_{\mathcal{F}_0}(\Omega,\mathbb{R}^n)$ and suppose that $x \in  C^2_{\mathcal{F}}([0,T]\times\Omega,\mathbb{R}^n)$ is a solution of the \textnormal{relaxed differential inclusion}
\begin{equation*}
\left\{
\begin{aligned}
\mathrm{d} x(t) & \in \overline{\textnormal{co}} F(t,x(t)) \mathrm{d}t + \sigma(t,x(t)) \mathrm{d}W_t, \\ 
x(0) & = x_0.
\end{aligned}
\right.
\end{equation*}
Then for each $\varepsilon >0$, there exists a solution $x_{\varepsilon} \in C^2_{\mathcal{F}}([0,T]\times\Omega,\mathbb{R}^n)$ of \eqref{eq:SDI'} which satisfies
\begin{equation*}
\|x - x_{\varepsilon} \|_{C^2_{\mathcal{F}}} \leq \varepsilon.  
\end{equation*}
\end{thm}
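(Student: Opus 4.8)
The plan is to follow the classical ``freeze, chatter, and correct'' scheme for relaxation, exploiting the crucial structural feature that only the drift $F$ is convexified while the diffusion $\sigma$ is left untouched --- precisely the configuration in which the $\mathrm{d}t$-integral exerts its convexifying (Aumann--Lyapunov) effect, in contrast with the It\^o integral discussed in Remark \ref{rmk:Ito}. First I would invoke the definition of solution to extract a progressively measurable selection $\bar f(t) \in \overline{\textnormal{co}} F(t,x(t))$ with
$$
\mathrm{d}x(t) = \bar f(t)\,\mathrm{d}t + \sigma(t,x(t))\,\mathrm{d}W_t, \qquad x(0)=x_0.
$$
Since $F$ has compact images in $\mathbb{R}^n$, its closed convex hull coincides with its convex hull, so by Carathéodory's theorem $\bar f(t)$ is a convex combination of at most $n+1$ points of $F(t,x(t))$; a measurable-selection argument (Theorem \ref{thm:Selections}$(a)$ applied to the associated auxiliary multifunctions) produces progressively measurable coefficients $\alpha_0,\dots,\alpha_n \ge 0$ with $\sum_i \alpha_i \equiv 1$ and progressively measurable selections $f_0,\dots,f_n$ of $F(\cdot,x(\cdot))$ such that $\bar f = \sum_i \alpha_i f_i$ almost everywhere.

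The heart of the argument, and the step I expect to be the main obstacle, is the construction, for each $\delta>0$, of a \emph{single} progressively measurable selection $f_\delta(t) \in F(t,x(t))$ whose running primitive tracks that of $\bar f$, in the sense that
$$
\mathbb{E}\bigg[ \sup_{t \in [0,T]} \Big\| \int_0^t \big( f_\delta(s) - \bar f(s) \big)\,\mathrm{d}s \Big\|^2 \bigg] \le \delta.
$$
This is the stochastic counterpart of the statement that the $\mathrm{d}t$-integral does not distinguish between $F$ and $\overline{\textnormal{co}} F$. I would obtain it by a chattering construction: partition $[0,T]$ into intervals of small mesh and, on each subinterval, switch between the selections $f_0,\dots,f_n$ along a measurable partition of $\Omega$ whose proportions match the averages of the weights $\alpha_i$, so that the oscillation of the primitive is controlled by the mesh size. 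The delicate point is that every set used to define $f_\delta$ must be adapted to the filtration $\mathcal{F}$, so the switching must be performed $\mathcal{F}_t$-progressively rather than through an arbitrary measurable selection; this is exactly where the progressive-measurability refinements recalled in the remark following Theorem \ref{thm:Selections} are needed, and where relaxing the diffusion would be impossible.

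With such an $f_\delta$ in hand, consider the auxiliary process $\hat y$ solving $\mathrm{d}\hat y(t) = f_\delta(t)\,\mathrm{d}t + \sigma(t,x(t))\,\mathrm{d}W_t$ with $\hat y(0)=x_0$, in which the diffusion is \emph{frozen} along $x$. Because the It\^o terms of $\hat y$ and $x$ are then identical, one has $\hat y(t) - x(t) = \int_0^t (f_\delta - \bar f)\,\mathrm{d}s$, whence $\| \hat y - x \|_{C^2_{\mathcal{F}}}^2 \le \delta$ directly. It remains to replace $\hat y$, which is only an approximate solution, by a genuine solution of \eqref{eq:SDI'}. For this I would apply the Filippov estimate of Theorem \ref{theo:Filippov} to the joint multifunction $\tilde F(t,\omega,z) := F(t,\omega,z) \times \{ \sigma(t,\omega,z) \}$, which is integrably bounded and progressively measurable-Lipschitz, taking $\hat y$ as the reference process with drift-diffusion pair $(f_\delta,\sigma(\cdot,x(\cdot)))$. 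Since $f_\delta(t) \in F(t,x(t))$ and both $F$ and $\sigma$ are $L$-Lipschitz in the state, the mismatch function satisfies $d(t) \le C L \, \| x(t) - \hat y(t) \|$, so Theorem \ref{theo:Filippov} yields a solution $x_\varepsilon$ of \eqref{eq:SDI'} with $\mathbb{E}\big[ \|x_\varepsilon(t) - \hat y(t)\|^2 \big] \le C'\,\mathbb{E}\int_0^T \| x(s) - \hat y(s) \|^2\,\mathrm{d}s \le C' T \delta$. Upgrading this pointwise-in-time bound to the $C^2_{\mathcal{F}}$-norm through the Burkholder--Davis--Gundy inequality and Gr\"onwall's lemma applied to $x_\varepsilon - \hat y$, and then combining with the previous estimate via the triangle inequality, gives $\| x - x_\varepsilon \|_{C^2_{\mathcal{F}}} \le \varepsilon$ upon choosing $\delta$ small enough in terms of $\varepsilon$, $T$, and $L$.
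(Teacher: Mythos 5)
Your proposal is correct and follows essentially the route the paper itself points to: the printed proof is a one-line deferral to the classical relaxation procedure of Vinter (Section 2.7), i.e.\ exactly the ``extract a selection of $\overline{\textnormal{co}}F$, approximate its primitive by a chattering selection of $F$ via an Aumann-type argument, then correct to a true solution with the Filippov estimate of Theorem \ref{theo:Filippov}'' scheme that you carry out, with the extra observation (which is the right one) that freezing the diffusion along $x$ makes the It\^o terms cancel so that only the $\mathrm{d}t$-part needs convexifying.

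One caution on the step you rightly identify as the crux. As written, ``switch between the selections $f_0,\dots,f_n$ along a measurable partition of $\Omega$'' would not suffice: assigning a single $f_i$ to each $\omega$ on a whole time subinterval only matches $\bar f$ in expectation, whereas the bound $\mathbb{E}\bigl[\sup_t\|\int_0^t(f_\delta-\bar f)\,\mathrm{d}s\|^2\bigr]\le\delta$ requires \emph{pathwise} tracking; what is needed is an $\omega$-dependent partition of each time subinterval $[t_k,t_{k+1})$ into sub-subintervals of lengths proportional to averaged weights $\bar\alpha_i^k(\omega)$. Moreover those averages taken over $[t_k,t_{k+1}]$ are only $\mathcal{F}_{t_{k+1}}$-measurable, so to keep $f_\delta$ progressively measurable one must use lagged averages (over the previous subinterval) or conditional expectations $\mathbb{E}[\bar\alpha_i^k\,|\,\mathcal{F}_{t_k}]$, absorbing the resulting discrepancy into the mesh error. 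With that repair the rest of your argument (freezing, Theorem \ref{theo:Filippov} applied to $F\times\{\sigma\}$, and the Burkholder--Davis--Gundy/Gr\"onwall upgrade to the $C^2_{\mathcal{F}}$-norm) goes through as stated.
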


\begin{proof}
Although we did not find a satisfactory reference for this result in the literature, its proof is standard and can be carried out by following the procedure detailed e.g. in \cite[Section 2.7]{Vinter}.
\end{proof}

\begin{rmk}[Obstruction to relaxation for general stochastic inclusions]
\label{rmk:Ito}
The relaxation theorem for stochastic differential inclusions of the form \eqref{eq:SDI'} stems from Aumann's famed convexity principle for the Lebesgue -- or more generally the Bochner -- integral (see e.g. \cite[Theorem 8.6.4]{Aubin1990}). The latter asserts that, given a Borel set $I\subset [0,T]$, a real number $\beta\in[1,+\infty)$, an integrably bounded progressively measurable set-valued map $F : I \times \Omega \rightrightarrows \mathbb{R}^n$ with closed nonempty images and a progressive selection $t \in I \mapsto f(t) \in \overline{\textnormal{co}} F(t)$, there exists for each $\varepsilon > 0$ another progressively measurable selection $t \in I \mapsto f_{\varepsilon} \in F(t)$ such that 
\begin{equation}
\label{eq:Aumann}
\mathbb{E} \Bigg[ \, \bigg\| \int_I f(t) \mathrm{d} t - \int_I f_{\varepsilon}(t) \mathrm{d} t \, \bigg\|^{\beta} \, \Bigg] \leq \varepsilon. 
\end{equation}
Unfortunately, as evidenced by the following elementary counterexample, such an identity \textit{does not hold} for the It\^o integral. Indeed, consider the constant set-valued map $(t,\omega) \in [0,1] \times \Omega \rightrightarrows F \subset \mathbb{R}^2$ defined by
\begin{equation*}
F := \bigg\{ (x,y) \in[0,1]^2 : y \in[0,1-2x] ~\text{if}~ x \in[0,\tfrac{1}{2}] ~~\text{and}~~ y \in[0,2x-1] ~\text{if}~ x \in[\tfrac{1}{2},1] \bigg\},
\end{equation*}
which is clearly integrably bounded with nonempty compact images. Fixing the constant selection $t \in [0,1] \mapsto f(t) := (\tfrac{1}{2},1) \in \overline{\textnormal{co}}F(t)$, it follows from It\^o's isometry formula (see e.g. \cite[Expression (5.8)]{LeGall2016}) that %
\begin{equation*}
\mathbb{E} \Bigg[ \, \bigg\| \int_0^1 f(t) \mathrm{d} W_t - \int_0^1 f_{\varepsilon}(t) \mathrm{d} W_t \, \bigg\|^2 \, \Bigg] =  \mathbb{E} \Bigg[ \, \int_0^1 \big\| f(t)- f_{\varepsilon}(t) \big\|^2 \mathrm{d} t \, \Bigg] \geq \frac{1}{5}
\end{equation*}
for each $\varepsilon >0$ and any progressively measurable selection $t \in [0,1] \mapsto f_{\varepsilon}(t) \in F(t)$. This violates \eqref{eq:Aumann} for each $\beta \in [2,+\infty)$ by H\"older's inequality, whereas a simple contradiction argument based on both reverse dominated convergence and Egoroff theorems also yields the obstruction for $\beta\in[1,2)$. To illustrate the contrast with the Lebesgue integral, notice that in this example one can very easily find progressively measurable selections $t \in[0,T]\mapsto \tilde{f}(t) \in F(t)$ which satisfy
\begin{equation*}
\mathbb{E} \Bigg[ \, \bigg\| \int_0^1 f(t) \mathrm{d} t - \int_0^1 \tilde{f}(t) \mathrm{d} t \, \bigg\|^2 \, \Bigg] = 0,
\end{equation*}
by choosing for instance $\tilde{f}(t) := \mathds{1}_{[0,1/2]}(t)(0,1) + \mathds{1}_{[1/2,1]}(t)(1,1)$ for all times $t \in[0,1]$.
\end{rmk}


\section{Risk-Averse Optimal Control and Pontryagin Maximum Principle}
\label{sec:PMP}

\setcounter{equation}{0} \renewcommand{\theequation}{\thesection.\arabic{equation}}

In the sequel, we will investigate Pontryagin optimality conditions for the following class of risk-averse stochastic optimal control problems
\begin{equation}
\label{eq:OCP}
\tag{$\textnormal{OCP}$}
\left\{
\begin{aligned}
\ \underset{u \in \mathcal{U}}{\min} \ & \rho\big( \varphi_0(x_u(T)) \big), \\
\textnormal{s.t.} ~ \, & \mathbb{E}\big[ \varphi_i(x_u(T)) \big] \le 0, ~~ i \in \{1,\dots,\ell\}.
\end{aligned}
\right.
\end{equation}
Therein, the minimization is taken over the set of curves $x_u \in C^2_{\mathcal{F}}([0,T]\times\Omega,\mathbb{R}^n)$ solution of \eqref{eq:SDE} for some admissible control $u \in \mathcal{U}$, where 
$$
\mathcal{U} \triangleq \Big\{ u : [0,T] \times \Omega \rightarrow U : \ u \ \textnormal{is progressively measurable} \Big\}.
$$
The mapping $\rho : L^1_{\mathcal{F}_T}(\Omega,\mathbb{R}) \to \mathbb{R}$ is a finite coherent risk measure, while $\varphi_i : \Omega\times\mathbb{R}^n \rightarrow \mathbb{R}$ for $i \in\{0,\dots\ell\}$ represent a cost and functional constraints at the final time. 

From now on, we assume that the maps $f : [0,T] \times \Omega  \times \mathbb{R}^n \times U \to \mathbb{R}^n$ and $\sigma : [0,T] \times \Omega \times \mathbb{R}^n \times U \to \mathbb{R}^{d \times n}$ satisfy hypotheses \textnormal{\ref{hyp:MAS}}, and posit that the cost and constraint mappings satisfy the following assumptions. 

\vspace{10pt}

\begin{flushleft}
\begin{taggedhypmac}{(MCC)} \hfill \\
\label{hyp:MAC}
\begin{enumerate}
    \item[$(i)$] For each $i \in \{ 0 , \dots , \ell \}$ and all $x \in \mathbb{R}^n$, the mapping $\varphi_i(\cdot,x) : \Omega\to\mathbb{R}$ is $\mathcal{F}_T$-measurable and such that $\varphi_i(\cdot,0) \in L^1_{\mathcal{F}_T}(\Omega,\mathbb{R}_+)$.
    \item[$(ii)$] For every $i \in \{ 0 , \dots , \ell \}$ and almost every $\omega\in\Omega$, the application $\varphi_i(\omega,\cdot) : \mathbb{R}^n\to\mathbb{R}$ is Fr\'echet differentiable, with 
    $$
    \left\| \frac{\partial \varphi_i}{\partial x}(\omega,x) \right\| \le L
    $$
    and
    $$
    \left\| \frac{\partial \varphi_i}{\partial x}(\omega,x) - \frac{\partial \varphi_i}{\partial x}(\omega,y) \right\| \le L \| x - y \| ,
    $$
    for all $x,y \in \mathbb{R}^n$, where the constant $L > 0$ is the same as in \textnormal{\ref{hyp:MAS}}-(iii).
\end{enumerate}
\end{taggedhypmac}
\end{flushleft}

\vspace{10pt}

\begin{rmk}[On the equivalence between Bolza and Mayer problems]
It is a standard fact in optimal control theory that every Bolza problem involving a running cost can be recast as a Mayer problem in which one only minimizes a final cost. Hence, the results that we prove in this article for Mayer problems still apply to Bolza problems under appropriate assumptions. Besides, one could then relax the compactness assumption on $U\subset\mathbb{R}^m$ by simply requiring that the latter be closed, provided that the running cost satisfies a Tonelli-type growth condition with respect to the control variable. 
\end{rmk}

Throughout this article, we will use the following terminology to refer to solutions of \eqref{eq:OCP} using the following terminology. 

\begin{Def}[Admissible pairs and local minima for \eqref{eq:OCP}]
We say that $(x,u)$ is an \textnormal{admissible trajectory-control pair} for \textnormal{\eqref{eq:OCP}} if $u \in \mathcal{U}$ and $x = x_u$ is a solution of \eqref{eq:SDE} satisfying $\mathbb{E}\big[ \varphi_i(x(T)) \big] \le 0$ for all $i \in \{1,\dots,\ell\}$. Moreover, an admissible pair $(x^*,u^*)$ is a \textnormal{local minimum} for \textnormal{\eqref{eq:OCP}} if there exists $\varepsilon > 0$ such that
\begin{equation*}
\rho\big( \varphi_0(x^*(T)) \big) \le \rho\big( \varphi_0(x(T)) \big),
\end{equation*}
for every other admissible pair $(x,u)$ satisfying $\| x - x^* \|_{C^2_{\mathcal{F}}} \le \varepsilon$.
\end{Def}
From now on, we assume the existence of a local minimum for \eqref{eq:OCP}, denoted $(x^*,u^*)$.

We are now ready to state and prove our main result, which are first-order necessary optimality conditions for \textnormal{\eqref{eq:OCP}} in the form of a Pontryagin Maximum Principle. In what follows, we denote by $H : [0,T] \times \Omega \times \mathbb{R}^n \times U \times \mathbb{R}^n \times \mathbb{R}^{n \times d} \rightarrow \mathbb{R}$ the \textit{Hamiltonian} associated with \textnormal{\eqref{eq:OCP}}, defined by
\begin{equation}
\label{eq:Hamiltonian}
H(t,\omega,x,u,p,q) \triangleq p \cdot f(t,\omega,x,u) + \sum^d_{i=1} q_i \cdot \sigma_i(t,\omega,x,u) .
\end{equation}
for all $(t,\omega,x,u,p,q) \in [0,T] \times \Omega \times \mathbb{R}^n \times U \times \mathbb{R}^n \times \mathbb{R}^{n \times d}$. We also consider the set of \textit{active indices} at $x^*(T)$, which is given by
$$
I^{\circ}(x^*(T)) \triangleq \Big\{ i \in \{ 1,\dots,\ell \} \, : \, \mathbb{E}\big[ \varphi_i(x^*(T)) \big] = 0 \Big\} .
$$
Finally, for the sake of clarity in the exposition, we separate the cases of controlled and uncontrolled diffusions, as the latter can be proven under milder assumptions.


\subsection{The PMP with Controlled Diffusion} 
\label{sec:controlledDiff}

In the case where the control variable acts on both the drift and the diffusion terms, we need to supplement hypotheses \ref{hyp:MAS} and \ref{hyp:MAC} with the following assumption.

\vspace{10pt}

\begin{flushleft}
\begin{taggedhypacd}{(ACD)} \hfill \\
\label{hyp:ACD}
The stochastic drift $f : [0,T] \times \Omega \times \mathbb{R}^n \times U \rightarrow \mathbb{R}^n$ and the diffusion term $\sigma : [0,T] \times \Omega \times \mathbb{R}^n \times U \rightarrow \mathbb{R}^{n \times d}$ are such that the velocity sets, defined by
$$
F (t,\omega,x) \triangleq \Big\{ \big( f(t,\omega,x,u) , \sigma(t,\omega,x,u) \big) : \ u \in U \Big\} \subset \mathbb{R}^{n + n \times d}, 
$$
are convex for almost every $(t,\omega) \in [0,T] \times \Omega$ and all $x\in \mathbb{R}^n$.
\end{taggedhypacd}
\end{flushleft}

\vspace{5pt}
    
\begin{rmk}
The above assumption, which has already been considered in \cite{Frankowska2020} in a similar setting, is standard in deterministic optimal control, where it is very useful to guarantee the existence of optimal controls. In particular, \textnormal{\ref{hyp:ACD}} holds true e.g. when $f$ and $\sigma$ are affine in the control variable and $U$ is convex.
\end{rmk}

\begin{thm}[Risk-averse PMP for \eqref{eq:OCP} with controlled diffusion] \label{thm:controlledDiff}
Suppose that hypotheses \textnormal{\ref{hyp:MAS}}, \textnormal{\ref{hyp:MAC}}, and \textnormal{\ref{hyp:ACD}} hold, and let $(x^*,u^*)$ be a local minimum for \textnormal{\eqref{eq:OCP}}. Then there exists a risk parameter $\xi^* \in \partial \rho\big( \varphi_0(x^*(T)) \big)$, non-trivial Lagrange multipliers $(\mathfrak{p}_0,\dots,\mathfrak{p}_{\ell}) \in \{-1,0\} \times \mathbb{R}_-^\ell$ and a pair of stochastic processes $(p^*,q^*) \in C^2_{\mathcal{F}}([0,T] \times \Omega,\mathbb{R}^n) \times L^2_{\mathcal{F}}([0,T] \times \Omega,\mathbb{R}^{n \times d})$ such that the following holds. 
\begin{enumerate}
\item[$(i)$] The complementary slackness conditions 
\begin{equation}
\label{eq:PMPSlack}
\mathfrak{p}_i \mathbb{E}[\varphi_i(x^*(T))] = 0
\end{equation}
are satisfied for each $i \in\{1,\dots,\ell\}$.
\item[$(ii)$] The risk parameter $\xi^* \in \partial\rho\big( \varphi_0(x^*(T)) \big)$ is characterised by the condition
\begin{equation}
\label{eq:PMPMaximisationOther}
\mathbb{E}[ \xi^* \varphi_0(x^*(T)) ] = \underset{\xi \in \partial \rho(0)}{\max} \ \mathbb{E}[ \xi \varphi_0(x^*(T)) ].
\end{equation}
\item[$(iii)$] The processes $(p^*,q^*) \in C^2_{\mathcal{F}}([0,T]\times\Omega,\mathbb{R}^n) \times L^2_{\mathcal{F}}([0,T] \times\Omega,\mathbb{R}^{d \times n})$ solve the backward adjoint equations
\begin{equation}
\label{eq:PMPAdjoint}
\left\{
\begin{aligned}
& \mathrm{d} p^*(t) = -\frac{\partial H}{\partial x} \big(t,x^*(t),u^*(t),p^*(t),q^*(t) \big) \mathrm{d}t + q^*(t) \mathrm{d} W_t \\
& p^*(T) = \xi^* \mathfrak{p}_0 \nabla\varphi_0(x^*(T)) + \sum_{i=1}^{\ell} \mathfrak{p}_i \nabla\varphi_i(x^*(T)).
\end{aligned}
\right.
\end{equation} 
\item[$(iv)$] The Pontryagin maximization condition 
\begin{equation}
\label{eq:PMPMaximisation}
H \big(t,x^*(t),u^*(t),p^*(t),q^*(t) \big) = \max_{u \in U} \ H\big(t,x^*(t),u,p^*(t),q^*(t) \big)
\end{equation}
holds almost everywhere. 
\end{enumerate}
Furthermore, if there exists a solution $y_{g_1,g_2} \in C^2_{\mathcal{F}}([0,T] \times\Omega,\mathbb{R}^n)$ of the linearized dynamics \eqref{eq:LSDE}  (see Step 1 below) that is such that
\begin{equation*}
\mathbb{E} \Big[ \nabla \varphi_i(x^*(T)) \cdot y_{g_1,g_2}^*(T) \Big] < 0    
\end{equation*}
for every $i \in I^{\circ}(x^*(T))$, then the PMP is \textnormal{normal}, i.e. $\mathfrak{p}_0= -1$. 
\end{thm}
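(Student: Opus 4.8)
The plan is to transpose the variational scheme of \cite{Frankowska1987,Frankowska2020} to the present risk-averse framework: I would linearize the controlled dynamics along the optimal pair using perturbations tangent to the convex velocity sets of \ref{hyp:ACD}, combine local optimality with the directional-derivative structure of $\rho$ from Theorem \ref{thm:RiskMeasures} to produce a first-order variational inequality, extract the risk parameter and the Lagrange multipliers via a separation argument backed by Sion's minimax Theorem \ref{thm:Sion}, and finally convert everything into the pointwise maximization condition through a duality computation resting on the adjoint equation. \textbf{Step 1 (Linearization).} Writing $(f^*(t),\sigma^*(t)) \triangleq \big( f(t,x^*(t),u^*(t)),\sigma(t,x^*(t),u^*(t)) \big)$ and denoting the Jacobians along the optimal pair by $A(t) \triangleq \frac{\partial f}{\partial x}(t,x^*(t),u^*(t))$ and $B_i(t) \triangleq \frac{\partial \sigma_i}{\partial x}(t,x^*(t),u^*(t))$, I would introduce the linearized stochastic differential equation
\begin{equation}
\label{eq:LSDE}
\tag{$\textnormal{LSDE}$}
\left\{
\begin{aligned}
\mathrm{d}y(t) & = \big( A(t) y(t) + g_1(t) \big) \mathrm{d}t + \sum_{i=1}^d \big( B_i(t) y(t) + g_2^i(t) \big) \mathrm{d}W^i_t, \\
y(0) & = 0,
\end{aligned}
\right.
\end{equation}
driven by progressively measurable perturbations $(g_1,g_2) \in L^2_{\mathcal{F}}([0,T] \times \Omega,\mathbb{R}^{n+n\times d})$ taking values in the tangent cone $T_{F(t,x^*(t))}\big( (f^*(t),\sigma^*(t)) \big)$. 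Such selections exist by Theorem \ref{thm:Selections}-(b), noting that \ref{hyp:ACD} forces $\co F(t,x^*(t)) = F(t,x^*(t))$ so that the tangent cone to the closed convex hull coincides with the tangent cone to $F$. Under \ref{hyp:MAS}-(iii) the coefficients $A,B_i$ are bounded, so \eqref{eq:LSDE} is well posed with a unique solution $y_{g_1,g_2} \in C^2_{\mathcal{F}}([0,T]\times\Omega,\mathbb{R}^n)$ depending linearly on $(g_1,g_2)$; since the tangent cone is convex, the set of attainable terminal states $\big\{ y_{g_1,g_2}(T) \big\} \subset L^2_{\mathcal{F}_T}(\Omega,\mathbb{R}^n)$ is a convex cone.

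\textbf{Steps 2--3 (First-order condition, minimax and multipliers).} Using the convexity hypothesis \ref{hyp:ACD}, the Filippov estimate of Theorem \ref{theo:Filippov} and the relaxation Theorem \ref{theo:Relaxation}, I would show that each solution of \eqref{eq:LSDE} arises as a genuine first-order variation of admissible trajectories of \eqref{eq:SDE}, so that local optimality of $(x^*,u^*)$ transfers to the infinitesimal level. Combined with the sublinear directional derivative $D\rho(Z)\cdot H = \max_{\xi\in\partial\rho(Z)}\mathbb{E}[\xi H]$ of Theorem \ref{thm:RiskMeasures}-(3), this rules out the existence of any $y$ solving \eqref{eq:LSDE} with $\mathbb{E}[\nabla\varphi_i(x^*(T))\cdot y(T)]<0$ for all $i \in I^{\circ}(x^*(T))$ and $\max_{\xi \in \partial\rho(\varphi_0(x^*(T)))} \mathbb{E}\big[ \xi\, \nabla\varphi_0(x^*(T)) \cdot y(T) \big] < 0$. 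Since the objective's first-order variation is the support function of the weakly-$^*$ compact convex set $\partial\rho(\varphi_0(x^*(T)))$ (Theorem \ref{thm:RiskMeasures}-(2)) and the pairing is bilinear, I would invoke Sion's Theorem \ref{thm:Sion} to interchange the maximization over $\xi$ with the infimum underlying the separation, thereby isolating a single risk parameter $\xi^* \in \partial\rho(\varphi_0(x^*(T)))$ --- which by Theorem \ref{thm:RiskMeasures}-(2) is exactly the characterization \eqref{eq:PMPMaximisationOther} of item (ii) --- for which the variational inequality becomes linear in $y$. A Gordan-type convex separation applied to this linearized problem then yields non-trivial multipliers $(\mathfrak{p}_0,\dots,\mathfrak{p}_{\ell}) \in \{-1,0\}\times\mathbb{R}_-^{\ell}$, with $\mathfrak{p}_i = 0$ for inactive indices (whence the complementary slackness \eqref{eq:PMPSlack} of item (i)), such that
\begin{equation*}
\mathbb{E}\bigg[ \Big( \mathfrak{p}_0\, \xi^* \nabla\varphi_0(x^*(T)) + \sum_{i=1}^{\ell} \mathfrak{p}_i \nabla\varphi_i(x^*(T)) \Big) \cdot y(T) \bigg] \le 0
\end{equation*}
for every solution $y$ of \eqref{eq:LSDE}.

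\textbf{Steps 4--6 (Adjoint equation, duality and maximization).} I would then set $p^*(T) \triangleq \xi^*\mathfrak{p}_0\nabla\varphi_0(x^*(T)) + \sum_{i=1}^{\ell}\mathfrak{p}_i\nabla\varphi_i(x^*(T))$ and construct the adjoint pair solving \eqref{eq:PMPAdjoint}: representing the $L^2$-bounded martingale $t\mapsto \mathbb{E}[\,\cdot\,|\mathcal{F}_t]$ associated with $p^*$ via the Martingale Representation Theorem \ref{thm:Mart} produces $q^*$, while standard backward-SDE estimates yield $p^* \in C^2_{\mathcal{F}}$ and $q^* \in L^2_{\mathcal{F}}$, giving item (iii). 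Applying It\^o's formula to $t \mapsto p^*(t)\cdot y(t)$ and taking expectations, the terms carrying $A$ and $B_i$ cancel against the adjoint drift $\frac{\partial H}{\partial x} = A^\top p^* + \sum_i B_i^\top q^*_i$ by design, leaving the duality identity
\begin{equation*}
\mathbb{E}\big[ p^*(T)\cdot y(T) \big] = \mathbb{E}\bigg[ \int_0^T \Big( p^*(t)\cdot g_1(t) + \sum_{i=1}^d q^*_i(t)\cdot g_2^i(t) \Big) \mathrm{d}t \bigg].
\end{equation*}
Feeding the separation inequality of Step 3 into this identity shows that the integrand is nonpositive in expectation for every tangent selection $(g_1,g_2)$. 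Localizing over arbitrary progressively measurable sets and using the tangent-cone selections of Theorem \ref{thm:Selections}-(b), I would conclude that, almost everywhere, $(p^*(t),q^*(t))$ lies in the normal cone to $F(t,x^*(t))$ at $(f^*(t),\sigma^*(t))$; equivalently, $(f^*(t),\sigma^*(t))$ maximizes the linear pairing $(a,b)\mapsto p^*(t)\cdot a + \sum_i q^*_i(t)\cdot b^i$ over the convex set $F(t,x^*(t))$. Since \ref{hyp:ACD} identifies the latter with $\{(f(t,x^*(t),u),\sigma(t,x^*(t),u)):u\in U\}$, this is precisely the Pontryagin maximization \eqref{eq:PMPMaximisation}, i.e. item (iv).

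\textbf{Normality and main obstacle.} For the final assertion I would argue by contradiction: if $\mathfrak{p}_0 = 0$, then non-triviality forces $\mathfrak{p}_i < 0$ for some active $i$, and substituting the qualifying direction $y_{g_1,g_2}$ into the separation inequality of Step 3 gives $\sum_{i\in I^{\circ}}\mathfrak{p}_i\,\mathbb{E}[\nabla\varphi_i(x^*(T))\cdot y_{g_1,g_2}(T)] \le 0$, whose left-hand side is nonetheless strictly positive since each $\mathfrak{p}_i \le 0$ is multiplied by a strictly negative quantity --- a contradiction forcing $\mathfrak{p}_0 = -1$. I expect the principal difficulty to lie in Steps 2--3 and 6, and to be of a structural nature: the obstruction to convexification of the It\^o integral exhibited in Remark \ref{rmk:Ito} is exactly what makes \ref{hyp:ACD} indispensable, as it is the convexity of the velocity sets --- rather than any Aumann-type relaxation --- that simultaneously legitimizes the tangent-cone linearization and upgrades the integral variational inequality into the \emph{pointwise} maximization condition, while the non-smoothness of $\rho$ must be handled by the minimax commutation in order to pin down a single admissible risk parameter $\xi^*$.
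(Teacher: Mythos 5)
Your plan follows essentially the same route as the paper: tangent-cone perturbations of the convex velocity sets, a separation argument on the image of the linearized reachable set, Sion's minimax theorem to pin down a single risk parameter, and a martingale-representation/It\^o duality to produce the adjoint pair and upgrade the integral inequality to the pointwise maximization. Two points, however, would fail or need repair as written. First, in Steps 2--3 you invoke the relaxation Theorem \ref{theo:Relaxation} to show that solutions of the linearized equation arise as first-order variations of admissible trajectories. That theorem applies only to inclusions of the form \eqref{eq:SDI'}, i.e.\ with a single-valued, uncontrolled diffusion, and Remark \ref{rmk:Ito} shows precisely that no Aumann-type relaxation is available for the It\^o integral --- so it cannot be used in the controlled-diffusion case. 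The paper's variational linearization (Theorem \ref{theo:var}) avoids relaxation altogether: it builds an intermediate trajectory $\bar x^{\varepsilon}_{g_1,g_2}$ driven by the perturbed coefficients and then exploits the convexity of $F(t,x)$ directly, via the interpolation $(1-\sqrt{\varepsilon})(f,\sigma)(t,\bar x^{\varepsilon}_{g_1,g_2}(t),u^*(t)) + \sqrt{\varepsilon}(\kappa_1^{\varepsilon},\kappa_2^{\varepsilon})(t)$, combined with the Filippov estimates of Theorem \ref{theo:Filippov}, to land back on genuine solutions of \eqref{eq:SDI}. You do note in your closing paragraph that convexity rather than relaxation is the operative mechanism, but this $\sqrt{\varepsilon}$-construction and its $o(\varepsilon)$ bookkeeping are the technical heart of the proof and are absent from your plan.

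Second, you apply Sion's theorem \emph{before} the separation that produces the multipliers, so as to fix $\xi^*$ first and then separate a ``linearized'' problem. The functional to which the minimax theorem must be applied is $\mathcal{H}(\xi,(g_1,g_2)) = \mathbb{E}\big[\big(\xi\mathfrak{p}_0\nabla\varphi_0(x^*(T)) + \sum_i\mathfrak{p}_i\nabla\varphi_i(x^*(T))\big)\cdot y_{g_1,g_2}(T)\big]$, which already contains the multipliers; a $\xi^*$ chosen by minimaxing the cost term alone need not make the final variational inequality hold uniformly in $(g_1,g_2)$. The paper therefore performs the separation first --- in two cases, depending on whether $\mathcal{B}_T$ meets the open negative orthant, which is also where $\mathfrak{p}_0\neq 0$ is established --- and only then applies Sion to the full multiplier-weighted expression, using the $L^2$-continuity of $(g_1,g_2)\mapsto y_{g_1,g_2}(T)$ (Lemma \ref{lemma:contY}) and the weak-$^*$ compactness of $\partial\rho(\varphi_0(x^*(T)))$. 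Your order should be reversed accordingly. The remaining steps --- martingale representation, the duality identity via the fundamental solutions $\phi,\psi$, the localization argument yielding the pointwise maximization, and the normality argument by contradiction --- match the paper's.
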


We split the proof of Theorem \ref{thm:controlledDiff} into five steps. In Step 1, we start by introducing a class of set-valued linearizations along candidate optimal trajectory-control pairs. We subsequently perform a separation argument on the reachable set of the corresponding linearized system and the linearizing cone to the constraints, first in the absence of qualification conditions in Step 2, and then when the constraints are qualified in Step 3. We further show in Step 4 that one can in fact select an optimal risk parameter for which the variational inequalities hold uniformly with respect to the whole reachable set, and finally conclude in Step 5 by proving that these latter yield the PMP in conjunction with the adjoint dynamics. 

In what follows, we will almost systematically use the convention introduced in Remark \ref{rmk:Condensed} for stochastic processes, and drop all explicit dependence in the variable $\omega\in\Omega$ unless necessary.

\vspace{5pt}

\noindent \textbf{Step 1 -- Variational linearizations along $\bm{(x^*,u^*)}$.} For every $(t,\omega,x,u) \in [0,T] \times \Omega \times \mathbb{R}^n \times U$, we introduce the notation 
$$
(f,\sigma)(t,\omega,x,u) \triangleq \big( f(t,\omega,x,u) , \sigma(t,\omega,x,u) \big)
$$ 
and recall following hypotheses \ref{hyp:ACD} that the set
\begin{align*}
    F (t,\omega,x) = \Big\{ (f,\sigma)(t,\omega,x,u) \, : \, u \in U \Big\} \subset \mathbb{R}^{n + n \times d}
\end{align*}
is convex. Besides, under hypotheses \ref{hyp:MAC}, one can easily prove that the set-valued mapping $F : [0,T] \times \Omega \times \mathbb{R}^n \rightrightarrows \mathbb{R}^{n + n \times d}$ is integrably bounded as well as progressively measurable-Lipschitz with nonempty compact images, following e.g. \cite[Theorem 8.2.8]{Aubin1990}. In particular, using the condensed notation of Remark \ref{rmk:Condensed}, it holds that 
\begin{equation*}
t \in [0,T] \mapsto (f,\sigma)(t,x^*(t),u^*(t)) \in F(t,x(t)),  
\end{equation*}
is an element of $L^2_{\mathcal{F}}([0,T] \times \Omega,\mathbb{R}^{n + n \times d})$. Moreover, it follows from Theorem \ref{thm:Selections} that the progressively measurable set-valued map
\begin{equation*}
t \in [0,T] \mapsto T_{F(t,x^*(t))} \big( (f,\sigma)(t,x^*(t),u^*(t)) \big) ,
\end{equation*}
has nonempty compact and convex images, and thus admits progressive selections
$$
t \in [0,T] \mapsto (g_1,g_2)(t) \in T_{F(t,x^*(t))} \big( (f,\sigma)(t,x^*(t),u^*(t)) \big)
$$
which belong to $L^2_{\mathcal{F}}([0,T] \times \Omega,\mathbb{R}^{n + n \times d})$.

Given such a progressively measurable tangent selection $(g_1,g_2)$, we denote by $y_{g_1,g_2} \in C^2_{\mathcal{F}}([0,T] \times \Omega,\mathbb{R}^n)$ the unique (up to stochastic indistinguishability) solution of the linearized stochastic differential equation
\begin{equation}
\label{eq:LSDE}
\tag{$\textnormal{LSDE}_{g_1,g_2}$}
\left\{
\begin{aligned}
\mathrm{d}y(t) & = \Big( A(t) y(t) + g_1(t) \Big) \mathrm{d}t + \sum^d_{i=1} \Big( D_i(t) y(t) + g_2^i(t) \Big) \mathrm{d}W^i_t , \\
y(0) & = 0,
\end{aligned}
\right.
\end{equation}
in which we used the condensed notations
$$
A(t) \triangleq \frac{\partial f}{\partial x}(t,x^*(t),u^*(t)) \qquad \text{and} \qquad D_i(t) \triangleq \frac{\partial \sigma_i}{\partial x}(t,x^*(t),u^*(t)) ,
$$
for almost every $t \in[0,T]$ and each $i \in\{ 1,\dots,d\}$. In the following lemma, we prove that $y_{g_1,g_2}$ is continuous with respect to $(g_1,g_2)$ in the strong $L^2_{\mathcal{F}}$-topology. This result will be useful later on in the proof of the maximum principle. 

\begin{lem} \label{lemma:contY}
There exists a constant $C > 0$ depending only on the magnitudes of $T,\|k\|_{L^2_\mathcal{F}}$ and $L$ such that for any given pair of progressively measurable selections
$$
t \in [0,T] \mapsto (g_1,g_2)(t),(\tilde g_1,\tilde g_2)(t) \in T_{F(t,x^*(t))} \big( (f,\sigma)(t,x^*(t),u^*(t)) \big),
$$
it holds that
$$
\| y_{g_1,g_2} - y_{\tilde g_1,\tilde g_2} \|_{C^2_{\mathcal{F}}} \le C \| (g_1,g_2) - (\tilde g_1,\tilde g_2) \|_{L^2_{\mathcal{F}}} .
$$
\end{lem}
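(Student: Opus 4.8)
The plan is to exploit the linearity of the solution map $(g_1,g_2) \mapsto y_{g_1,g_2}$, so as to reduce the claimed Lipschitz estimate to a single a priori bound for \eqref{eq:LSDE}. First I would set $z \triangleq y_{g_1,g_2} - y_{\tilde{g}_1,\tilde{g}_2}$ together with the forcing pair $(h_1,h_2) \triangleq (g_1 - \tilde{g}_1, g_2 - \tilde{g}_2)$. Since \eqref{eq:LSDE} is linear in the triple $(y,g_1,g_2)$ and departs from $y(0)=0$, the uniqueness of its solutions ensures that $z$ is itself the solution of \eqref{eq:LSDE} associated with the selection $(h_1,h_2)$. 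It then suffices to establish an estimate of the form $\| z \|_{C^2_{\mathcal{F}}} \le C \, \| (h_1,h_2) \|_{L^2_{\mathcal{F}}}$ for the solution $z \in C^2_{\mathcal{F}}([0,T]\times\Omega,\mathbb{R}^n)$ of \eqref{eq:LSDE} driven by an arbitrary pair $(h_1,h_2) \in L^2_{\mathcal{F}}([0,T]\times\Omega,\mathbb{R}^{n+n\times d})$, with a constant $C>0$ depending only on $T$, $L$ and the fixed dimension $d$ (which are, in particular, among the quantities permitted by the statement).

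For this a priori bound I would resort to the classical combination of the Burkholder-Davis-Gundy inequality and Gr\"onwall's lemma. Writing $z$ in its integral form and setting $\phi(\tau) \triangleq \mathbb{E}\big[ \sup_{t\in[0,\tau]} \| z(t) \|^2 \big]$, I would bound the drift contribution $\int_0^t \big( A(s) z(s) + h_1(s) \big)\,\mathrm{d}s$ by the Cauchy-Schwarz inequality, and the stochastic contribution $\int_0^t \sum_{i=1}^d \big( D_i(s) z(s) + h_2^i(s) \big)\,\mathrm{d}W_s^i$ by the Burkholder-Davis-Gundy inequality recalled in Section \ref{sec:Preli}, in both cases invoking the uniform bounds $\| A(t) \|, \| D_i(t) \| \le L$ furnished by \ref{hyp:MAS}-$(iii)$. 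After absorbing the elementary numerical constants, this produces an integral inequality of the form $\phi(\tau) \le C_1 \int_0^\tau \phi(s)\,\mathrm{d}s + C_2 \, \| (h_1,h_2) \|_{L^2_{\mathcal{F}}}^2$, valid for every $\tau \in [0,T]$, with $C_1,C_2>0$ depending only on $T$, $L$ and $d$.

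I would then conclude by Gr\"onwall's lemma. The one genuine subtlety is that invoking Gr\"onwall requires knowing a priori that $\phi(\tau)$ is finite for each $\tau\in[0,T]$; this is precisely guaranteed by the membership $z \in C^2_{\mathcal{F}}([0,T]\times\Omega,\mathbb{R}^n)$ built into the very definition of the solutions of \eqref{eq:LSDE}. Granting this, Gr\"onwall's inequality yields $\phi(T) \le C_2 \hspace{0.05cm} e^{C_1 T} \| (h_1,h_2) \|_{L^2_{\mathcal{F}}}^2$, and taking square roots returns the desired estimate $\| z \|_{C^2_{\mathcal{F}}} \le C \, \| (h_1,h_2) \|_{L^2_{\mathcal{F}}}$. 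I do not anticipate any deeper obstacle here: the entire content is the bookkeeping of the Burkholder-Davis-Gundy/Gr\"onwall estimate, the only point deserving care being the finiteness of $\phi$ noted above, which is exactly why one genuinely needs the $C^2_{\mathcal{F}}$-regularity of $y_{g_1,g_2}$ rather than its mere integrability.
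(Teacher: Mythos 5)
Your proposal is correct and follows essentially the same route as the paper: the difference process satisfies the linear equation \eqref{eq:LSDE} with forcing $(g_1-\tilde g_1, g_2-\tilde g_2)$, and the estimate is obtained by combining the Burkholder--Davis--Gundy and H\"older (Cauchy--Schwarz) inequalities with the uniform bounds on $A$ and $D_i$ from \ref{hyp:MAS}-$(iii)$, followed by Gr\"onwall's lemma. Your explicit remark on the a priori finiteness of the supremum needed to invoke Gr\"onwall is a sound, if minor, addition that the paper leaves implicit.
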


\begin{proof}
Thanks to hypotheses \ref{hyp:MAS} and a routine application of Burkholder-Davis-Gundy's and H\"older's inequalities, we obtain for every $t \in [0,T]$ that 
{\small
\begin{align*}
    &\mathbb{E}\left[ \underset{s \in [0,t]}{\sup} \| y_{g_1,g_2}(s) - y_{\tilde g_1,\tilde g_2}(s) \|^2 \right] \\
    &\le C \, \mathbb{E}\left[ \left( \int^t_0 \| A(s) \| \, \| y_{g_1,g_2}(s) - y_{\tilde g_1,\tilde g_2}(s) \| \mathrm{d}s \right)^2 \, \right] \\
    & \hspace{0.4cm} + C \sum^d_{i=1} \mathbb{E}\left[ \int^t_0 \| D_i(s) \|^2 \, \| y_{g_1,g_2}(s) - y_{\tilde g_1,\tilde g_2}(s) \|^2 \mathrm{d}s \right] \\
    & \hspace{0.4cm} + C \, \mathbb{E}\left[ \left( \int^t_0 \| g_1(s) - \tilde g_1(s) \| \; \mathrm{d}s \right)^2 + \sum^d_{i=1} \int^t_0 \| g_2^i(s) - \tilde g_2^i(s) \|^2 \; \mathrm{d}s \right] \\
    &\le C \mathbb{E}\left[ \int^t_0 \underset{\zeta \in [0,s]}{\sup} \| y_{g_1,g_2}(\zeta) - y_{\tilde g_1,\tilde g_2}(\zeta) \|^2  \mathrm{d}s + \int^T_0 \| (g_1,g_2)(s) - (\tilde g_1,\tilde g_2)(s) \|^2  \mathrm{d}s \right],
\end{align*}}
where $C > 0$ denotes some overloaded constant which only depends on the magnitudes of $T,\|k\|_{L^2_{\mathcal{F}}}$ and $L$. We then conclude by an application of Gronw\"all's lemma.
\end{proof}

In this context, we have the following fundamental linearization result.

\begin{thm}[Variational linearization]
\label{theo:var}
For any progressively measurable selection $t \in [0,T] \mapsto (g_1,g_2)(t) \in T_{F(t,x^*(t))} (f,\sigma)(t,x^*(t),u^*(t))$ and each $\varepsilon > 0$, there exists a solution $x^{\varepsilon}_{g_1,g_2} \in C^2_{\mathcal{F}}([0,T] \times \Omega,\mathbb{R}^n)$ of the dynamics \eqref{eq:SDI} such that 
\begin{equation}
\label{eq:var}
\displaystyle \underset{\varepsilon \rightarrow 0^+}{\lim} \frac{1}{\varepsilon} \mathbb{E} \bigg[ \, \underset{t \in [0,T]}{\sup} \| x^{\varepsilon}_{g_1,g_2}(t) - x^*(t) - \varepsilon y_{g_1,g_2}(t) \| \bigg] = 0,
\end{equation}
where $y_{g_1,g_2} \in C^2_{\mathcal{F}}([0,T] \times \Omega,\mathbb{R}^n)$ is the unique solution of \eqref{eq:LSDE}.
\end{thm}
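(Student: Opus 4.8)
The plan is to exhibit $x^* + \varepsilon y_{g_1,g_2}$ as a first-order development of a bona fide solution of \eqref{eq:SDI}, by combining the convexity granted by \ref{hyp:ACD} with the Filippov estimates of Theorem \ref{theo:Filippov}. Writing $y := y_{g_1,g_2}$ for short, I would set $z_\varepsilon := x^* + \varepsilon y \in C^2_{\mathcal{F}}([0,T] \times \Omega,\mathbb{R}^n)$ and observe, from \eqref{eq:SDE} and \eqref{eq:LSDE}, that $z_\varepsilon$ solves the stochastic differential equation driven by the drift-diffusion pair
\[
V_\varepsilon(t) := (f,\sigma)(t,x^*(t),u^*(t)) + \varepsilon \Big( A(t)y(t) + g_1(t) \, , \, D(t)y(t) + g_2(t) \Big),
\]
where $D(t)y(t) := (D_i(t)y(t))_{i=1}^d$. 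Everything then reduces to proving that the progressively measurable mismatch $d_\varepsilon(t) := \textnormal{dist}_{F(t,z_\varepsilon(t))}\big( V_\varepsilon(t) \big)$ satisfies $\| d_\varepsilon \|_{L^2_{\mathcal{F}}} = o(\varepsilon)$. Indeed, taking $z_\varepsilon$ as reference process in Theorem \ref{theo:Filippov} --- in the supremum-in-time form furnished by its Burkholder-Davis-Gundy-based proof --- yields a solution $x^{\varepsilon}_{g_1,g_2}$ of \eqref{eq:SDI} with $\mathbb{E}\big[ \sup_{t} \| x^{\varepsilon}_{g_1,g_2}(t) - z_\varepsilon(t) \|^2 \big] \leq C \| d_\varepsilon \|_{L^2_{\mathcal{F}}}^2$, and \eqref{eq:var} then follows by Cauchy-Schwarz since $z_\varepsilon - x^* = \varepsilon y$.

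The heart of the matter is thus the estimate $\| d_\varepsilon \|_{L^2_{\mathcal{F}}} = o(\varepsilon)$. The Lipschitz continuity of $\partial_x f$ and $\partial_x \sigma$ in \ref{hyp:MAS}-(iii) furnishes the uniform-in-$u$ Taylor expansion $(f,\sigma)(t,z_\varepsilon(t),u) = (f,\sigma)(t,x^*(t),u) + \varepsilon\, \partial_x(f,\sigma)(t,x^*(t),u)\, y(t) + O\big( \varepsilon^2 \| y(t) \|^2 \big)$. Comparing $V_\varepsilon(t)$ with the admissible velocity $(f,\sigma)(t,z_\varepsilon(t),u^*(t)) \in F(t,z_\varepsilon(t))$ already yields the crude bound $d_\varepsilon(t) \leq \varepsilon \| (g_1,g_2)(t) \| + C\varepsilon^2 \| y(t) \|^2$, which dominates $d_\varepsilon/\varepsilon$ and will later justify passing to the limit by dominated convergence. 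To obtain the sharp pointwise decay $d_\varepsilon(t) = o(\varepsilon)$, I would use that $(g_1,g_2)(t)$ belongs to the tangent cone of the \emph{convex} set $F(t,x^*(t))$ at $(f,\sigma)(t,x^*(t),u^*(t))$, hence is a limit of feasible directions $\lambda_k \big( (f,\sigma)(t,x^*(t),v_k) - (f,\sigma)(t,x^*(t),u^*(t)) \big)$ with $v_k \in U$. The decisive observation is that, by convexity of $F(t,z_\varepsilon(t))$ --- granted by \ref{hyp:ACD} --- the honest velocity
\[
(1 - \varepsilon\lambda_k)\,(f,\sigma)(t,z_\varepsilon(t),u^*(t)) + \varepsilon\lambda_k\,(f,\sigma)(t,z_\varepsilon(t),v_k) \, \in \, F(t,z_\varepsilon(t))
\]
sits within $O(\varepsilon^2 \lambda_k) + \varepsilon\delta_k$ of $V_\varepsilon(t)$, the deformation of the velocity set caused by the $\varepsilon y$-shift being matched to first order by the $\varepsilon(A,D)y$ terms carried by $V_\varepsilon$, and $\delta_k \to 0$ quantifying the tangential approximation. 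A diagonal choice $k = k(\varepsilon)$ with $\varepsilon \lambda_{k(\varepsilon)} \to 0$ then yields $d_\varepsilon(t) = o(\varepsilon)$ pointwise.

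I expect this mismatch estimate to be the main obstacle, and it is exactly here that \ref{hyp:ACD} cannot be dispensed with: the perturbed velocity is reached \emph{honestly}, through a single convex combination of drift-diffusion pairs lying in $F(t,z_\varepsilon(t))$, rather than through a chattering control averaging two velocities over a vanishing time set. The latter device --- the engine of the classical deterministic argument via Aumann's theorem --- is unavailable for the controlled It\^o integral, which lacks the convexifying effect of the Lebesgue integral, as the counterexample of Remark \ref{rmk:Ito} makes plain. The residual difficulties are measure-theoretic: the selections $v_k$, the scalars $\lambda_k$ and the diagonal index $k(\varepsilon)$ must be extracted progressively measurably, which I would arrange through Theorem \ref{thm:Selections}-(c) applied to the intersection of $F(t,z_\varepsilon(t))$ with a shrinking ball centered at $V_\varepsilon(t)$; and the dominated-convergence step leading to $\| d_\varepsilon / \varepsilon \|_{L^2_{\mathcal{F}}} \to 0$ calls for enough integrability of $\sup_t \| y(t) \|$, which is supplied by the linearity of \eqref{eq:LSDE}, the boundedness of $A$ and $D$ in \ref{hyp:MAS}-(iii), and the Burkholder-Davis-Gundy inequality. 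Should the diagonal construction prove unwieldy at the level of measurable selections, I would instead first establish \eqref{eq:var} for \emph{feasible} tangent directions $(g_1,g_2) = \lambda\big( (f,\sigma)(\cdot,v) - (f,\sigma)(\cdot,u^*) \big)$ --- for which the convex combination above produces an $O(\varepsilon^2)$ mismatch with no diagonal argument --- and then pass to arbitrary tangent selections by density, leaning on the Lipschitz dependence of $y_{g_1,g_2}$ upon $(g_1,g_2)$ recorded in Lemma \ref{lemma:contY}.
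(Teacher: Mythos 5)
Your proposal is correct in its essentials and rests on the same key mechanism as the paper's proof --- hypothesis \ref{hyp:ACD} is used to realize the perturbed velocity as an honest convex combination of admissible drift--diffusion pairs evaluated along the perturbed trajectory, and the Filippov estimates of Theorem \ref{theo:Filippov} then convert an $o(\varepsilon)$ mismatch into a genuine solution of \eqref{eq:SDI} --- but it organizes the argument differently. The paper proceeds in two stages: it first introduces the intermediate process $\bar x^{\varepsilon}_{g_1,g_2}$ solving the \emph{nonlinear} perturbed equation with drift $f(t,x,u^*(t))+\varepsilon g_1(t)$ and diffusion $\sigma(t,x,u^*(t))+\varepsilon g_2(t)$, shows by Gronwall and dominated convergence that $\bar x^{\varepsilon}_{g_1,g_2}-x^*-\varepsilon y_{g_1,g_2}$ is $o(\varepsilon)$ in the relevant norm, and only then applies Theorem \ref{theo:Filippov} with $\bar x^{\varepsilon}_{g_1,g_2}$ as reference process; your single-stage variant takes $z_\varepsilon = x^*+\varepsilon y_{g_1,g_2}$ itself as the reference and absorbs the linearization error into the mismatch function. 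Your ``diagonal choice $k(\varepsilon)$ with $\varepsilon\lambda_{k(\varepsilon)}\to 0$'' is precisely what the paper implements concretely, and progressively measurably, through its $\sqrt{\varepsilon}$-rescaling: it projects $(f,\sigma)(t,x^*(t),u^*(t))+\sqrt{\varepsilon}(g_1,g_2)(t)$ onto $F(t,x^*(t))$, divides by $\sqrt{\varepsilon}$ to obtain selections $(g_1^\varepsilon,g_2^\varepsilon)\to(g_1,g_2)$ in $L^2_{\mathcal F}$, and forms the combination $(1-\sqrt{\varepsilon})(f,\sigma)(t,\bar x^{\varepsilon}_{g_1,g_2}(t),u^*(t))+\sqrt{\varepsilon}(\kappa_1^\varepsilon,\kappa_2^\varepsilon)(t)$, so that the convex weight is $\sqrt{\varepsilon}$, the tangential error is $d_{\sqrt{\varepsilon}}/\sqrt{\varepsilon}\to 0$, and no explicit extraction of pairs $(v_k,\lambda_k)$ is ever required. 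Your route buys a shorter chain of estimates; the paper's buys cleaner measurability and domination, since every error term is controlled against $\bar x^{\varepsilon}_{g_1,g_2}-x^*$ rather than against $y_{g_1,g_2}$.

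One point needs repair. You write the Taylor remainder in the mismatch bound as $O(\varepsilon^2\|y(t)\|^2)$ and assert that the required integrability of $\sup_{t}\|y(t)\|$ follows from the linearity of \eqref{eq:LSDE} and the Burkholder--Davis--Gundy inequality. Since the forcing $(g_1,g_2)$ is merely an element of $L^2_{\mathcal F}([0,T]\times\Omega,\mathbb{R}^{n+n\times d})$, the linear equation only yields $y_{g_1,g_2}\in C^2_{\mathcal F}$, not fourth-moment bounds, so $\varepsilon\|y(\cdot)\|^2$ need not tend to zero in $L^2([0,T]\times\Omega)$ and the dominated-convergence step as written is not justified. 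The fix is standard and is what the paper does implicitly: using both the boundedness and the Lipschitz continuity of $\tfrac{\partial f}{\partial x}$ and $\tfrac{\partial \sigma}{\partial x}$ from \ref{hyp:MAS}-(iii), bound the remainder by $\min\big(2L\varepsilon\|y(t)\|,\tfrac{L}{2}\varepsilon^2\|y(t)\|^2\big)$, so that after division by $\varepsilon$ it is dominated by $2L\|y(t)\|\in L^2_{\mathcal F}$ and vanishes pointwise, and conclude by dominated convergence. With this adjustment, and with the measurable selections arranged through Theorem \ref{thm:Selections} as you indicate, your argument goes through.
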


\begin{proof}
Our proof is inspired from that of \cite[Theorem 3.12]{Bonnet2021}. We fix a progressively measurable selection $t \in [0,T] \mapsto (g_1,g_2)(t) \in T_{F(t,x^*(t))} (f,\sigma)(t,x^*(t),u^*(t))$, some $\varepsilon > 0$ and consider the progressively measurable mapping 
\begin{equation*}
    t \in[0,T] \mapsto d_{\varepsilon}(t) := \textnormal{dist}_{F(t,x^*(t))}\Big( (f,\sigma)(t,x^*(t),u^*(t)) + \varepsilon (g_1,g_2)(t) \Big).
\end{equation*}
It can be checked that the latter satisfies 
\begin{equation*}
d_{\varepsilon}(t,\omega) \le \varepsilon \| (g_1,g_2)(t,\omega) \| \qquad \text{and} \qquad \underset{\varepsilon \rightarrow 0^+}{\lim} \ d_{\varepsilon}(t,\omega) / \varepsilon = 0
\end{equation*}
for almost every $(t,\omega) \in [0,T]\times\Omega$, so that in particular $d_{\varepsilon} \in L^2_{\mathcal{F}}([0,T]\times\Omega,\mathbb{R})$. Let $\bar x^{\varepsilon}_{g_1,g_2} \in C^2_{\mathcal{F}}([0,T]\times\Omega,\mathbb{R}^n)$ be the unique (up to stochastic indistinguishability) solution of
\begin{equation}
\tag{$\textnormal{SDI}^{\varepsilon}_{g_1,g_2}$}
\label{eq:SDIEps}
\left\{
\begin{aligned}
\mathrm{d}x(t) & = \Big( f \big( t,x(t),u^*(t) \big) + \varepsilon g_1(t) \Big) \mathrm{d}t \\
& \hspace{0.45cm} + \Big( \sigma \big(t,x(t),u^*(t) \big) + \varepsilon g_2(t) \Big) \mathrm{d}W_t, \\
x(0) & = x_0.
\end{aligned}
\right.
\end{equation}
Thanks to hypotheses \ref{hyp:MAS} and a routine application of the Burkholder-Davis-Gundy and H\"older inequalities, we readily obtain that for every $t \in [0,T]$, it holds  
{\small
\begin{align*}
    &\mathbb{E}\left[ \underset{s \in [0,t]}{\sup} \| \bar x^{\varepsilon}_{g_1,g_2}(s) - x^*(s) \|^2 \right] \\
    & \le C \varepsilon^2 \mathbb{E}\left[ \left( \int^t_0 \| g_1(s) \| \; \mathrm{d}s \right)^2 + \sum^d_{i=1} \int^t_0 \| g_2(s)^i \|^2 \; \mathrm{d}s \right] \\
    & \hspace{0.25cm} + C \mathbb{E}\left[ \left( \int^t_0 \left\| \int^1_0 \frac{\partial f}{\partial x} \Big( s,x^*(s) + \theta (\bar{x}^{\varepsilon}_{g_1,g_2}(s)  - x^*(s)),u^*(s) \Big) \big( \bar x^{\varepsilon}_{g_1,g_2}(s) - x^*(s) \big) \mathrm{d}\theta \, \right\| \mathrm{d}s \right)^2 \right] \\
    & \hspace{0.25cm} + C \sum^d_{i=1} \mathbb{E}\left[ \int^t_0 \left\| \int^1_0 \frac{\partial \sigma^i}{\partial x} \Big( s,x^*(s) + \theta (\bar{x}^{\varepsilon}_{g_1,g_2}(s)  - x^*(s)),u^*(s) \Big) \big( \bar x^{\varepsilon}_{g_1,g_2}(s) - x^*(s) \big) \mathrm{d}\theta \right\|^2 \hspace{-0.1cm} \mathrm{d}s \right] \\
    & \le C \left( \mathbb{E}\left[ \int^t_0 \underset{\zeta \in [0,s]}{\sup} \| \bar x^{\varepsilon}_{g_1,g_2}(\zeta) - x^*(\zeta) \|^2 \; \mathrm{d}s + \varepsilon^2 \int^T_0 \| (g_1,g_2)(s) \|^2 \; \mathrm{d}s \right] \right) ,
\end{align*}}
where $C > 0$ denotes some overloaded constant which only depends on $T$ and $L$. Then, a direct application of Gronw\"all's inequality leads to
\begin{equation} 
\label{eq:convTraj}
\mathbb{E}\left[ \underset{t \in [0,T]}{\sup} \| \bar x^{\varepsilon}_{g_1,g_2}(t) - x^*(t) \|^2 \right] \le C \varepsilon^2 \| (g_1,g_2) \|^2_{L^2_{\mathcal{F}}} .
\end{equation}
On the other hand, by introducing the notations
{\small
\begin{equation*}
\left\{
\begin{aligned}
A^{\varepsilon}_{g_1,g_2}(t) &  \triangleq \int^1_0 \left( \frac{\partial f}{\partial x} \Big( s,x^*(s) + \theta (\bar{x}^{\varepsilon}_{g_1,g_2}(s)  - x^*(s)),u^*(s) \Big)  - \frac{\partial f}{\partial x}(s,x^*(s),u^*(s)) \right) \mathrm{d}\theta, \\
D^{\varepsilon,i}_{g_1,g_2}(t) & \triangleq \int^1_0 \left( \frac{\partial \sigma^i}{\partial x} \Big( s,x^*(s) + \theta (\bar{x}^{\varepsilon}_{g_1,g_2}(s)  - x^*(s)),u^*(s) \Big)  - \frac{\partial \sigma^i}{\partial x}(s,x^*(s),u^*(s)) \right) \mathrm{d}\theta,
\end{aligned}
\right.
\end{equation*}
}
for all times $t \in[0,T]$, one may easily show that the process defined by 
\begin{equation*}
r^{\varepsilon}_{g_1,g_2}(t) \; \triangleq \; \bar x^{\varepsilon}_{g_1,g_2}(t) - x^*(t) - \varepsilon y_{g_1,g_2}(t)
\end{equation*}
solves the stochastic differential equation
\begin{equation*}
\left\{
\begin{aligned}
    \mathrm{d}r(t) &= \Big( A(t) + A^{\varepsilon}_{g_1,g_2}(t) \Big) r(t) \mathrm{d}t + \sum^d_{i=1} \Big( D_i(t) + D^{\varepsilon,i}_{g_1,g_2}(t) \Big) r(t) \mathrm{d}W^i_t \\
    & \hspace{1.375cm} + \varepsilon \left( A^{\varepsilon}_{g_1,g_2}(t) y_{g_1,g_2}(t) \; \mathrm{d}t + \sum^d_{i=1} D^{\varepsilon,i}_{g_1,g_2}(t) y_{g_1,g_2}(t) \mathrm{d}W^i_t \right), \\
    r(0) & = 0. 
\end{aligned}
\right.
\end{equation*}
Thanks to hypotheses \ref{hyp:MAS}, it then follows from Lemma \ref{lemma:bound} applied to the latter dynamics that
{\small
\begin{align*}
    &\mathbb{E}\left[ \underset{t \in [0,T]}{\sup} \big\| \bar x^{\varepsilon}_{g_1,g_2}(t) - x^*(t) - \varepsilon y_{g_1,g_2}(t) \big\| \right] \\
    &\le \varepsilon C \left( \mathbb{E}\left[ \int^T_0 \| A^{\varepsilon}_{g_1,g_2}(s) \| \| y_{g_1,g_2}(s) \| \mathrm{d}s \right] + \sum^d_{i=1} \mathbb{E}\left[ \int^T_0 \| D^{\varepsilon,i}_{g_1,g_2}(s) \|^2 \| y_{g_1,g_2}(s) \|^2 \mathrm{d}s \right]^{1/2} \right) .
\end{align*}}
where $C > 0$ denotes some overloaded constant which only depends on the magnitudes of $T,\|k\|_{L^2_{\mathcal{F}}}$ and $L$. Observe that now that, from \eqref{eq:convTraj}, we may infer that
$$
\underset{t \in [0,T]}{\sup} \| \bar x^{\varepsilon}_{g_1,g_2}(t) - x^*(t) \|^2 \underset{\varepsilon \to 0^+}{\longrightarrow 0},
$$
almost surely. From hypothesis \ref{hyp:MAS} and the dominated convergence, we thus have
{\small
$$
\mathbb{E}\left[ \int^T_0 \| A^{\varepsilon}_{g_1,g_2}(s) \| \, \| y_{g_1,g_2}(s) \| \; \mathrm{d}s \right] + \sum^d_{i=1} \mathbb{E}\left[ \int^T_0 \| D^{\varepsilon,i}_{g_1,g_2}(s) \|^2 \, \| y_{g_1,g_2}(s) \|^2 \; \mathrm{d}s \right]^{1/2} \underset{\varepsilon \to 0^+}{\longrightarrow} 0,
$$}
which allows us to conclude that
$$
\underset{\varepsilon \rightarrow 0^+}{\lim} \ \frac{1}{\varepsilon} \mathbb{E} \bigg[\, \underset{t \in [0,T]}{\sup} \| \bar x^{\varepsilon}_{g_1,g_2}(t) - x^*(t) - \varepsilon y_{g_1,g_2}(t) \| \bigg] = 0.
$$
To end the proof of our claim, there remains to establish the existence of a solution $x^{\varepsilon}_{g_1,g_2} \in C^2_{\mathcal{F}}([0,T] \times \Omega,\mathbb{R}^n)$ to \eqref{eq:SDI} which satisfies
\begin{equation}
\label{eq:Linearization1}
\underset{\varepsilon \rightarrow 0^+}{\lim} \frac{1}{\varepsilon^2} \mathbb{E}\left[ \underset{t \in [0,T]}{\sup} \| x^{\varepsilon}_{g_1,g_2}(t) - \bar x^{\varepsilon}_{g_1,g_2}(t) \|^2 \right] = 0.
\end{equation}
By Theorem \ref{thm:Selections}, there exists for every $\varepsilon > 0$ a progressively measurable selection $t \in [0,T] \mapsto (h^{\varepsilon}_1,h^{\varepsilon}_2)(t) \in F(t,x^*(t))$ which is such that
\begin{equation*}
\| (f,\sigma)(t,x^*(t),u^*(t)) + \sqrt{\varepsilon} (g_1,g_2)(t) - (h^{\varepsilon}_1,h^{\varepsilon}_2)(t) \| = d_{\sqrt{\varepsilon}}(t),
\end{equation*}
almost everywhere. Therefore, the progressively measurable maps defined by 
$$
t \in[0,T] \mapsto (g^{\varepsilon}_1,g^{\varepsilon}_2)(t) \triangleq \frac{(h^{\varepsilon}_1,h^{\varepsilon}_2)(t) - (f,\sigma)(t,x^*(t),u^*(t))}{\sqrt{\varepsilon}}
$$
are elements of $L^2_{\mathcal{F}}([0,T] \times \Omega,\mathbb{R}^{n + n \times d})$ since they are bounded almost everywhere in norm by $2 \|(g_1,g_2)\|$ $+1$, and are such that
\begin{equation}
\label{eq:Construction1}
(f,\sigma)(t,x^*(t),u^*(t)) + \sqrt{\varepsilon} (g^{\varepsilon}_1,g^{\varepsilon}_2)(t) \in F(t,x^*(t)).
\end{equation}
Moreover, it can be easily checked that since $d_{\sqrt{\varepsilon}}(t)/\sqrt{\varepsilon} \to 0^+$ as $\varepsilon \to0^+$, one has  
\begin{equation}
\label{eq:Linearization2}
\| (g_1,g_2) - (g^{\varepsilon}_1,g^{\varepsilon}_2) \|_{L^2_{\mathcal{F}}} ~\underset{\varepsilon \to0^+}{\longrightarrow}~ 0
\end{equation}
by Lebesgue's dominated convergence theorem. Similarly, by Theorem \ref{thm:Selections} combined with \eqref{eq:Construction1}, one can find a selection $t \in [0,T] \mapsto (\kappa^{\varepsilon}_1,\kappa^{\varepsilon}_2)(t) \in F(t,\bar x^{\varepsilon}_{g_1,g_2}(t))$ for which
\begin{equation}
\label{eq:Construction2}
\begin{aligned}
    \big\| (f,\sigma)(t,x^*(t),u^*(t)) & + \sqrt{\varepsilon}  \big( g^{\varepsilon}_1,g^{\varepsilon}_2)(t) - (\kappa^{\varepsilon}_1,\kappa^{\varepsilon}_2)(t) \big\| \\
    & \hspace{0cm} = \textnormal{dist}_{F(t,\bar x^{\varepsilon}_{g_1,g_2}(t))} \Big( (f,\sigma)(t,x^*(t),u^*(t)) + \sqrt{\varepsilon} (g^{\varepsilon}_1,g^{\varepsilon}_2)(t) \Big) \\
    & \hspace{0cm} \le d_{\mathcal{H}} \Big( F(t,x^*(t)) , F(t,\bar x^{\varepsilon}_{g_1,g_2}(t)) \Big) \\
    & \hspace{0cm} \le L \, \| \bar x^{\varepsilon}_{g_1,g_2}(t) - x^*(t) \|
\end{aligned}
\end{equation}
holds almost everywhere. At this stage, thanks to the convexity requirement formulated in hypothesis \ref{hyp:ACD}, one can further observe that
\begin{equation}
\label{eq:Construction3}
(1 - \sqrt{\varepsilon}) (f,\sigma) (t,\bar x^{\varepsilon}_{g_1,g_2}(t),u^*(t)) + \sqrt{\varepsilon} (\kappa^{\varepsilon}_1,\kappa^{\varepsilon}_2)(t) \in F \big( t,\bar x^{\varepsilon}_{g_1,g_2}(t) \big) ,
\end{equation}
which implies in particular that
$$
\textnormal{dist}_{F(\cdot,\bar x^{\varepsilon}_{g_1,g_2}(\cdot))} \Big( (f,\sigma)(\cdot,\bar x^{\varepsilon}_{g_1,g_2}(\cdot),u^*(\cdot)) + \varepsilon (g_1,g_2)(\cdot) \Big) \in L^2_{\mathcal{F}}([0,T] \times \Omega,\mathbb{R}) .
$$
Since $\bar{x}^{\varepsilon}_{g_1,g_2} \in C^2_{\mathcal{F}}([0,T]\times\Omega,\mathbb{R}^n)$ solves \eqref{eq:SDIEps}, we can apply Theorem \ref{theo:Filippov} to obtain the existence of a solution $x^{\varepsilon}_{g_1,g_2} \in C^2_{\mathcal{F}}([0,T] \times \Omega,\mathbb{R}^n)$ to \eqref{eq:SDI} which satisfies
\begin{align*}
    &\mathbb{E} \bigg[ \, \underset{t \in [0,T]}{\sup} \ \big\| x^{\varepsilon}_{g_1,g_2}(t) - \bar x^{\varepsilon}_{g_1,g_2}(t) \big\|^2 \bigg] \\
    & \hspace{1cm} \le C \mathbb{E} \Bigg[ \int^T_0 \textnormal{dist}_{F(t,\bar x^{\varepsilon}_{g_1,g_2}(t))} \Big( (f,\sigma)(t,\bar x^{\varepsilon}_{g_1,g_2}(t),u^*(t)) + \varepsilon (g_1,g_2)(t) \Big)^2 \; \mathrm{d}t \Bigg] .
\end{align*}
This last identity together with the convergence results of \eqref{eq:Linearization1}-\eqref{eq:Linearization2} and the constructions detailed in \eqref{eq:Construction2}-\eqref{eq:Construction3} allows us to finally recover that
\begin{align*}
    &\frac{1}{\varepsilon^2} \mathbb{E}\left[ \int^T_0 \textnormal{dist}_{F(t,\bar x^{\varepsilon}_{g_1,g_2}(t))} \Big( (f,\sigma) \big(t,\bar x^{\varepsilon}_{g_1,g_2}(t),u^*(t)\big) + \varepsilon (g_1,g_2)(t) \Big)^2 \; \mathrm{d}t \right] \\
    &\le \frac{1}{\varepsilon^2} \mathbb{E}\Bigg[ \int^T_0 \Big\| (f,\sigma)(t,\bar x^{\varepsilon}_{g_1,g_2}(t),u^*(t)) + \varepsilon (g_1,g_2)(t) \\
    &\qquad \qquad \qquad \qquad - (1 - \sqrt{\varepsilon}) (f,\sigma)(t,\bar x^{\varepsilon}_{g_1,g_2}(t),u^*(t)) - \sqrt{\varepsilon} (\kappa^{\varepsilon}_1,\kappa^{\varepsilon}_2)(t) \Big\|^2 \mathrm{d}t \Bigg] \\
    &\le C \mathbb{E}\left[ \int^T_0 \| (g_1,g_2)(t) - (g^{\varepsilon}_1,g^{\varepsilon}_2)(t) \|^2 \; \mathrm{d}t \right] \\
    &\quad + \frac{C}{\varepsilon} \mathbb{E}\left[ \int^T_0 \big\| (f,\sigma)(t,\bar x^{\varepsilon}_{g_1,g_2}(t),u^*(t)) - (f,\sigma)(t,x^*(t),u^*(t)) \big\|^2 \; \mathrm{d}t \right] \\
    &\quad + \frac{C}{\varepsilon} \mathbb{E}\left[ \int^T_0 \big\| (f,\sigma)(t,x^*(t),u^*(t)) + \sqrt{\varepsilon} \big( g^{\varepsilon}_1,g^{\varepsilon}_2)(t) - (\kappa^{\varepsilon}_1,\kappa^{\varepsilon}_2)(t) \big) \big\|^2 \; \mathrm{d}t \right] \\
    &\le C \mathbb{E}\left[ \int^T_0 \| (g_1,g_2)(t) - (g^{\varepsilon}_1,g^{\varepsilon}_2)(t) \|^2 \; \mathrm{d}t \right] + \frac{C}{\varepsilon} \mathbb{E}\left[ \underset{t \in [0,T]}{\sup} \| \bar x^{\varepsilon}_{g_1,g_2}(t) - x^*(t) \|^2 \right] \underset{\varepsilon \to 0^+}{\longrightarrow} 0,
\end{align*}
thanks to \eqref{eq:convTraj}, where $C > 0$ is some overloaded constant which only depends on the magnitudes of $T,\|k\|_{L^2_{\mathcal{F}}}$ and $L$, from which the thesis follows.
\end{proof}


\noindent \textbf{Step 2 -- Separation theorem without constraint qualification.} From now on, up to relabeling the indices, we assume without loss of generality that there exists $j \in \{ 1,\dots,\ell \}$ such that $I^{\circ}(x^*(T)) = \{ 1,\dots,j \}$. In addition, we posit that 
\begin{equation*}
\nabla \varphi_0(x^*(T)) \neq 0 \qquad \text{and} \qquad \nabla \varphi_i(x^*(T)) \neq 0 ~~ \text{for every $i \in I^{\circ}(x^*(T))$}
\end{equation*}
as elements of $L^2_{\mathcal{F}_T}(\Omega,\mathbb{R}^n)$. Otherwise, if $\nabla \varphi_k(x^*(T)) = 0$ for some $k \in \{0,\dots,j\}$, one can observe that the statements of Theorem \ref{thm:controlledDiff} are trivially satisfied with $\mathfrak{p}_k = 1$, $\mathfrak{p}_i =0$ for $i \in\{0,\dots,\ell\} \setminus\{k\}$, $p^*,q^*$being set to zero, and $\xi^*$ being a solution of \eqref{eq:PMPMaximisationOther}.

By leveraging the notation introduced hereinabove, we define the reachable set of the linearized Cauchy problem \eqref{eq:LSDE} by 
{\small
\begin{equation*}
\begin{aligned}
\mathcal{R}_T \triangleq \bigg\{ y_{g_1,g_2}(T) \in L^2_{\mathcal{F}_T}(\Omega,\mathbb{R}^n) & \, : \, \text{$y_{g_1,g_2} \in C^2_{\mathcal{F}}([0,T]\times\Omega,\mathbb{R}^n)$ solves \eqref{eq:LSDE} for some} \\
& \hspace{0.05cm} ~ t \in [0,T] \mapsto (g_1,g_2)(t) \in T_{F(t,x^*(t))} \big( (f,\sigma)(t,x^*(t),u^*(t)) \big) \bigg\} .
\end{aligned}
\end{equation*}}
Since the images of $T_{F(\cdot,x^*(\cdot))} (f,\sigma)(\cdot,x^*(\cdot),u^*(\cdot))$ are convex cones and \eqref{eq:LSDE} is linear with respect to both $y_{g_1,g_2}$ and $(g_1,g_2)$, one can easily check that $\mathcal{R}_T \subset L^2_{\mathcal{F}_T}(\Omega,\mathbb{R}^n)$ is a nonempty convex cone as well. At this stage, we introduce the set
\begin{align*}
    \mathcal{B}_T \triangleq \bigg\{ \hspace{-0.1cm} \Big( &\mathbb{E}\big[ \nabla \varphi_1(x^*(T)) \cdot y_{g_1,g_2}(T) \big] , \dots  , \mathbb{E}\big[ \nabla \varphi_j(x^*(T)) \cdot  y_{g_1,g_2}(T) \big] \Big) \hspace{-0.1cm} : \hspace{-0.05cm} y_{g_1,g_2}(T) \in \mathcal{R}_T \hspace{-0.1cm} \bigg\} ,
\end{align*}
which is a nonempty convex cone in $\mathbb{R}^j$, and assume at first that 
$$
\mathcal{B}_T \cap (-\infty,0)^j = \emptyset.
$$
In that case, by the  separation theorem, we may infer the existence of a non-trivial element $\mathfrak{p} \in \mathbb{R}^j \setminus \{0\}$ such that
$$
-\infty < \underset{a \in (-\infty,0)^j}{\sup} \ \mathfrak{p} \cdot a \; \le \; \underset{b \in \mathcal{B}_T}{\inf} \ \mathfrak{p} \cdot b < \infty .
$$
Observing that both $\mathcal{B}_T$ and $(-\infty,0)^j$ are cones while using standard results of convex analysis, the latter separation inequality implies that
\begin{equation} 
\label{eq:var1}
    \sum^j_{i=1} \mathfrak{p}_i \mathbb{E}\big[ \nabla \varphi_i(x^*(T)) \cdot y_{g_1,g_2}(T) \big] \ge 0 \quad \text{and} \quad \text{$\mathfrak{p}_i \ge 0$ ~ for each  $i\in\{1,\dots,j\}$}.
\end{equation}

\vspace{5pt}


\noindent \textbf{Step 3 -- Separation theorem with constraint qualification.} We now investigate the scenario in which $\mathcal{B}_T \cap (-\infty,0)^j \neq \emptyset$, which calls for a deeper analysis in the separation argument. To this end, we introduce the nonempty convex cone of $\mathbb{R}^{j+1}$
\begin{align*}
    \mathcal{A}_T \triangleq \bigg\{ \Big( D\rho \big( \varphi_0(x^*(T)) \big) \big( \nabla \varphi_0(x^*(T)) \cdot y_{g _1,g_2}(T) \big) , \mathbb{E}\big[ \nabla \varphi_1(x^*(T)) \cdot y_{g_1,g_2}(T) \big] , \dots, & \\
    \mathbb{E}\big[ \nabla \varphi_j(x^*(T)) \cdot y_{g_1,g_2}(T) \big] \Big) \, : \ y_{g_1,g_2}(T) \in \mathcal{R}_T & \bigg\} ,
\end{align*}
and assume by contradiction that 
$$
\mathcal{A}_T \cap (-\infty,0)^{j+1} \neq \emptyset.
$$
The latter identity is tantamount to the existence a progressively measurable selection $t \in [0,T] \mapsto (g_1,g_2)(t) \in T_{F(t,x^*(t))} (f,\sigma)(t,x^*(t),u^*(t))$ such that
\begin{equation}
\label{eq:Contradiction1}
D\rho\big( \varphi_0(x^*(T)) \big) \cdot \big( \nabla \varphi_0(x^*(T)) \cdot y_{g _1,g_2}(T) \big) < 0,
\end{equation}
and 
\begin{equation}
\label{eq:Contradiction2}
\mathbb{E}\big[ \nabla \varphi_i(T,x^*(T)) \cdot y_{g_1,g_2}(T) \big] < 0 \qquad \text{for each $i\in\{1,\dots,j\}$}.
\end{equation}
At this stage, thanks to Theorem \ref{theo:var}, we may find for every $\varepsilon > 0$ a solution $x^{\varepsilon}_{g_1,g_2} \in C^2_{\mathcal{F}}([0,T] \times \Omega,\mathbb{R}^n)$ to \eqref{eq:SDI} which satisfies \eqref{eq:var}. In particular, from \cite[Theorem 8.1.3 and Theorem 8.2.10]{Aubin1990} we readily obtain the existence of a progressively measurable control mapping $u^{\varepsilon}_{g_1,g_2} : [0,T] \times \Omega \rightarrow U$ such that $x^{\varepsilon}_{g_1,g_2} = x_{u^{\varepsilon}_{g_1,g_2}}$ is an admissible trajectory of \eqref{eq:SDE}. Besides, by leveraging Theorem \ref{thm:RiskMeasures}, we may write that
{\small
\begin{equation}
\label{eq:Contradiction3}
\begin{aligned}
    \rho\big( \varphi_0(x^{\varepsilon}_{g_1,g_2}(T)) \big) &  = \rho\big( \varphi_0(x^*(T)) \big) + D\rho\big( \varphi_0(x^*(T)) \big) \cdot \big( \varphi_0(x^{\varepsilon}_{g_1,g_2}(T)) - \varphi_0(x^*(T)) \big) \\
    & \hspace{0.45cm} + o \Big( \| \varphi_0(x^{\varepsilon}_{g_1,g_2}(T)) - \varphi_0(x^*(T)) \|_{L^1_{\mathcal{F}_T}} \Big) \\
    & = \rho\big( \varphi_0(x^*(T)) \big) + D\rho\big( \varphi_0(x^*(T)) \big) \cdot \Big( \varepsilon \nabla\varphi_0(x^*(T)) \cdot y_{g_1,g_2}(T) + o(\varepsilon) \Big) \\
    & \hspace{0.45cm} + o\big( \|x^{\varepsilon}_{g_1,g_2} - x^*\|_{C^1_{\mathcal{F}}} \big) \\
    & \le \rho\big( \varphi_0(x^*(T)) \big) + \varepsilon D\rho\big( \varphi_0(x^*(T)) \big) \cdot \big( \nabla \varphi_0(x^*(T)) \cdot y_{g _1,g_2}(T) \big) + o( \varepsilon )
\end{aligned}
\end{equation}}
where we used hypothesis \ref{hyp:MAC}-$(ii)$ along with the distance estimates of Theorem \ref{theo:var} and the fact that $\partial \rho\big( \varphi_0(x^*(T)) \big) \subset L^{\infty}_{\mathcal{F}_T}(\Omega,\mathbb{R})$ is bounded by Theorem \ref{thm:RiskMeasures}. Analogously, it  holds for every $i \in \{1,\dots,j\}$ that
\begin{equation}
\label{eq:Contradiction4}
\mathbb{E}\big[ \varphi_i(x^{\varepsilon}_{g_1,g_2}(T)) \big] \le \mathbb{E}\big[ \varphi_i(x^*(T)) \big] + \varepsilon \mathbb{E}\big[ \nabla \varphi_i(x^*(T)) \cdot y_{g_1,g_2}(T) \big] + o(\varepsilon) .
\end{equation}
By combining \eqref{eq:Contradiction1}-\eqref{eq:Contradiction3} on the one hand and \eqref{eq:Construction2}-\eqref{eq:Contradiction4} on the other hand, we conclude that whenever $\varepsilon>0$ is small enough, $(x^{\varepsilon}_{g_1,g_2},u^{\varepsilon}_{g_1,g_2})$ is an admissible pair for \eqref{eq:OCP} whose cost is strictly lower than that of $(x^*,u^*)$, which contradicts our standing assumption. Whence, it necessarily holds that $\mathcal{A}_T \cap (-\infty,0)^{j+1} = \emptyset$.

At this stage, by applying yet again the separation theorem, we may infer the existence of a nontrivial multiplier $(\mathfrak{p}_0,\mathfrak{p}) \triangleq (\mathfrak{p}_0,\mathfrak{p}_1,\dots,\mathfrak{p}_{\ell}) \in \mathbb{R}^{j+1} \setminus \{0\}$ such that
\begin{equation*}
\underset{a \in (-\infty,0)^{j+1}}{\sup} (\mathfrak{p}_0,\mathfrak{p}) \cdot a \le \underset{b \in \mathcal{A}_T}{\inf} (\mathfrak{p}_0,\mathfrak{p}) \cdot b.
\end{equation*}
First, we show that we necessarily have $\mathfrak{p}_0 \neq 0$. Indeed, if by contradiction we assume that $\mathfrak{p}_0 = 0$, the latter inequality becomes
$$
\underset{a \in (-\infty,0)^j}{\sup} \mathfrak{p} \cdot a \; \le \; \underset{b \in \mathcal{B}_T}{\inf} \ \mathfrak{p} \cdot b.
$$
Now, since we assumed that there exists at least one element in $c \in (-\infty,0)^j \cap  \mathcal{B}_T$, we may select by continuity another point $a_c \in (-\infty,0)^j$ in such a way that
\begin{equation*}
\mathfrak{p} \cdot a_c \le \sup_{a \in(-\infty,0)^j} \mathfrak{p} \cdot a \; \leq \; \inf_{b \in\mathcal{B}_T} \mathfrak{p} \cdot b \leq \mathfrak{p} \cdot c < \mathfrak{p}\cdot a_c,
\end{equation*}
which leads to a contradiction. Moreover, since $(-\infty,0)^j$ and $\mathcal{B}_T$ are both cones, we further obtain up to a renormalization by $\mathfrak{p}_0$ that $\mathfrak{p}_i \ge 0$ for every $i=1,\dots,j$, and 
\begin{equation} \label{eq:var2}
    D\rho\big( \varphi_0(x^*(T)) \big) \cdot \big( \nabla \varphi_0(x^*(T)) \cdot y_{g _1,g_2}(T) \big) + \sum^j_{i=1} \mathfrak{p}_i \mathbb{E}\Big[ \nabla \varphi_i(x^*(T))\cdot y_{g_1,g_2}(T) \Big] \ge 0 .
\end{equation}
Up to trivially embedding $\mathfrak{p}$ into $\mathbb{R}^{\ell}$, changing its sign and merging \eqref{eq:var2} with \eqref{eq:var1}, there exists a nontrivial multiplier $(\mathfrak{p}_0,\dots,\mathfrak{p}_{\ell}) \in \{-1,0\} \times \mathbb{R}_-^{\ell}$ such that 
$$
(\mathfrak{p}_0,\dots,\mathfrak{p}_{\ell}) \neq 0 \qquad \text{and} \qquad \mathfrak{p}_i \mathbb{E}\big[ \varphi_i(x^*(T)) \big] = 0 ~~ \text{for every $i \in \{1,\dots,\ell\}$},
$$ 
and for which, thanks to Theorem \ref{thm:RiskMeasures}, the following linearized inequality 
\begin{equation} 
\label{eq:varWeak}
\underset{\xi \in \partial \rho( \varphi_0(x^*(T)))}{\inf} \ \mathbb{E} \Bigg[ \bigg( \xi \mathfrak{p}_0 \nabla \varphi_0(x^*(T)) + \sum^{\ell}_{i=1} \mathfrak{p}_i \nabla \varphi_i(x^*(T)) \bigg)\cdot y_{g_1,g_2}(T) \Bigg] \le 0, 
\end{equation}
holds for any selection $t \in[0,T] \mapsto (g_1(t),g_2(t)) \in T_{F(t,x^*(t))} (f,\sigma )\big(t,x^*(t),u^*(t) \big)$. In particular, the Lagrange multiplier $(\mathfrak{p}_0,\dots,\mathfrak{p}_{\ell})$ is non-trivial, and complies with the complementary slackness conditions \eqref{eq:PMPSlack} of the PMP. 

\vspace{0.25cm}


\noindent \textbf{Step 4 -- Universal separation theorem.} In what follows, we extract further information from \eqref{eq:varWeak}, by observing that the latter inequality can be rewritten as 
\begin{equation}
\label{eq:varWeakBis}
\sup_{(g_1,g_2)} \underset{\xi \in \partial \rho( \varphi_0(x^*(T)))}{\inf} \ \mathbb{E}\left[ \bigg( \xi \mathfrak{p}_0 \nabla \varphi_0(x^*(T)) + \sum^{\ell}_{i=1} \mathfrak{p}_i \nabla \varphi_i(x^*(T)) \bigg)\cdot y_{g_1,g_2}(T) \right] \le 0,
\end{equation}
which leads us to consider the mapping
$$
\mathcal{H}(\xi,(g_1,g_2)) \triangleq \mathbb{E}\left[ \bigg( \xi \mathfrak{p}_0 \nabla \varphi_0(x^*(T)) + \sum^{\ell}_{i=1} \mathfrak{p}_i \nabla \varphi_i(x^*(T)) \bigg)\cdot y_{g_1,g_2}(T) \right],
$$
that is defined for each $\xi \in L^{\infty}_{\mathcal{F}_T}(\Omega,\mathbb{R})$ and every progressively measurable selection $t \in[0,T] \mapsto (g_1(t),g_2(t)) \in T_{F(t,x^*(t))} \big( (f,\sigma)(t,x^*(t),u^*(t)) \big)$. 

By Theorem \ref{thm:RiskMeasures}, the set $\partial \rho\big( \varphi_0(x^*(T)) \big) \subset L^{\infty}_{\mathcal{F}_T}(\Omega,\mathbb{R})$ is convex and weakly-$^*$ compact, whereas the set of all progressively measurable selections
$$
t \in[0,T] \mapsto (g_1(t),g_2(t)) \in T_{F(t,x^*(t))}((f,\sigma)(t,x^*(t),u^*(t))
$$
is a convex subset of $L^2_{\mathcal{F}}([0,T] \times\Omega,\mathbb{R}^{n + n \times d})$. Moreover, it can be checked that
\begin{equation*}
\xi \mapsto \mathcal{H}(\xi,(g_1,g_2))
\end{equation*}
is continuous for the weak-$^*$ topology of $L^{\infty}_{\mathcal{F}_T}(\Omega,\mathbb{R})$ since $\mathfrak{p}_0 \nabla \varphi_0(x^*(T)) \cdot y_{g_1,g_2}(T) \in L^1_{\mathcal{F}_T}(\Omega,\mathbb{R}^n)$. On the other hand, it follows from Lemma \ref{lemma:contY} that
$$
(g_1,g_2) \mapsto \mathcal{H}(\xi,(g_1,g_2))
$$ 
is continuous for the strong topology of $L^2_{\mathcal{F}}([0,T]\times \Omega,\mathbb{R}^{n + n \times d})$. Since both topologies under consideration are Hausdorff (see e.g. \cite[Proposition 3.11]{Brezis} for the former), it follows from the separation result of Theorem \ref{thm:Sion} that we can rewrite \eqref{eq:varWeakBis} as 
\begin{equation*}
\inf_{\xi \in \partial \rho( \varphi_0(x^*(T)))} \sup_{(g_1,g_2)} \mathbb{E} \Bigg[ \bigg( \xi \mathfrak{p}_0 \nabla \varphi_0(x^*(T)) + \sum^{\ell}_{i=1} \mathfrak{p}_i \nabla \varphi_i(x^*(T)) \bigg)\cdot y_{g_1,g_2}(T) \Bigg] \le 0.
\end{equation*}
Because the supremum of a family of lower semicontinuous functions remains lower semicontinuous for that same topology (see e.g. \cite[Proposition 3.2.3]{Attouch2005}), the mapping 
$$
\xi \mapsto \sup_{(g_1,g_2)} \mathcal{H}(\xi,(g_1,g_2))
$$ 
is weakly-$^*$ lower-semicontinuous. This, along with the fact that $\partial \rho\big( \varphi_0(x^*(T)) \big) \subset L^{\infty}_{\mathcal{F}_T}(\Omega,\mathbb{R})$ is weakly-$^*$ compact, yields the existence of $\xi^* \in \partial \rho\big( \varphi_0(x^*(T)) \big)$ such that
\begin{equation}
\label{eq:uniVar0}
\sup_{(g_1,g_2)} \mathcal{H}(\xi^*,(g_1,g_2)) = \min_{\xi \in \partial \rho( \varphi_0(x^*(T)))}  \sup_{(g_1,g_2)} \ \mathcal{H}(\xi,(g_1,g_2)) \le 0.
\end{equation}
In particular, this directly provides us with the condition \eqref{eq:PMPMaximisationOther} of the PMP as consequence of the characterization of $\partial \rho\big( \varphi_0(x^*(T)) \big)$ given in Theorem \ref{thm:RiskMeasures}.

\vspace{5pt}


\noindent \textbf{Step 5 -- Costate dynamics and maximisation condition.} In what follows, we derive the adjoint equation and recover the maximality condition from \eqref{eq:uniVar0}, which will conclude the proof of Theorem \ref{thm:controlledDiff}. While the underlying computations come from classical BSDE theory, see e.g. \cite{Frankowska2020}, we reproduce them below for the sake of readability and completeness. Notice first that \eqref{eq:uniVar0} straightforwardly implies that
\begin{equation} 
\label{eq:uniVar}
\mathbb{E} \Bigg[ \bigg( \xi^* \mathfrak{p}_0 \nabla \varphi_0(x^*(T)) + \sum^{\ell}_{i=1} \mathfrak{p}_i \nabla \varphi_i(x^*(T)) \bigg)\cdot y_{g_1,g_2}(T) \Bigg] \le 0
\end{equation}
for every selection $t \in [0,T] \mapsto (g_1,g_2)(t) \in T_{F(t,x^*(t))} \big( (f,\sigma)(t,x^*(t),u^*(t)) \big)$. We denote by $\phi , \psi \in L^2_{\mathcal{F}}([0,T]\times\Omega,\mathbb{R}^{n \times n})$ the unique (up to stochastic indistinguishability) solutions of the matrix-valued stochastic differential equations 
\begin{equation*}
\phi(t) = \textnormal{Id} + \int_0^t A(s) \phi(s) \mathrm{d}s + \sum^d_{i=1} \int_0^t D_i(s) \phi(s) \mathrm{d}W^i_s 
\end{equation*}
and
\begin{equation*}
\psi(t) = \textnormal{Id} - \int_0^t \psi(s) \bigg( A(s) - \sum^d_{i=1} D_i^2(s) \bigg) \mathrm{d}s - \sum^d_{i=1} \int_0^t \psi(s) D_i(s) \mathrm{d}W^i_s, 
\end{equation*}
whose well-posedness are guaranteed e.g. by \cite[Section 1.6.3]{Yong1999}. We list in the following lemma some properties of these maps, whose proofs rely on simple componentwise applications of the It\^o formula in the spirit e.g. of \cite[Theorem 6.14, Chapter 1]{Yong1999}.

\begin{lem}
The maps $\phi , \psi$ are elements of $C^{\beta}_{\mathcal{F}}([0,T] \times \Omega,\mathbb{R}^{n \times n})$ for every $\beta \in [2,+\infty)$, and satisfy the identity $\psi(t) = \phi(t)^{-1}$ for all times $t \in[0,T]$.
\end{lem}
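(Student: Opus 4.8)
The plan is to treat the two assertions separately. The membership $\phi,\psi \in C^{\beta}_{\mathcal{F}}([0,T]\times\Omega,\mathbb{R}^{n\times n})$ for every $\beta\in[2,+\infty)$ is a routine integrability estimate. By hypothesis \ref{hyp:MAS}-$(iii)$ the coefficients $A(\cdot)$ and $D_i(\cdot)$ are bounded almost everywhere in operator norm by $L$, so both equations are linear with uniformly bounded coefficients. First I would raise the norm to the power $\beta$, control the martingale parts with the Burkholder-Davis-Gundy inequality and the drift with H\"older's inequality, and then invoke Gr\"onwall's lemma, exactly as in the proofs of Lemma \ref{lemma:bound} and Lemma \ref{lemma:contY}. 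This yields a bound of the form $\| \phi \|_{C^{\beta}_{\mathcal{F}}} + \| \psi \|_{C^{\beta}_{\mathcal{F}}} \le C(T,L,\beta) < \infty$ for every $\beta \ge 2$.

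The substance of the lemma is the identity $\psi(t) = \phi(t)^{-1}$, which I would establish by showing that the product $P(t) \triangleq \psi(t)\phi(t)$ is almost surely constant equal to $\mathrm{Id}$. I would apply the matrix-valued It\^o product rule $\mathrm{d}(\psi\phi) = (\mathrm{d}\psi)\phi + \psi(\mathrm{d}\phi) + \mathrm{d}\langle\psi,\phi\rangle$, understood componentwise through the scalar It\^o formula as suggested in the statement, being careful to retain the quadratic covariation term. Since the diffusion coefficient of $\psi$ along $W^i$ is $-\psi D_i$ and that of $\phi$ is $D_i\phi$, the cross-variation contributes the extra drift $\sum_{i=1}^d (-\psi D_i)(D_i\phi)\,\mathrm{d}t = -\sum_{i=1}^d \psi D_i^2 \phi \,\mathrm{d}t$.

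Next I would collect the three contributions. The martingale parts cancel identically, as $(\mathrm{d}\psi)\phi$ contributes $-\sum_i \psi D_i \phi \,\mathrm{d}W^i$ and $\psi(\mathrm{d}\phi)$ contributes $+\sum_i \psi D_i \phi \,\mathrm{d}W^i$. For the drift, the term from $(\mathrm{d}\psi)\phi$ is $-\psi\big(A - \sum_i D_i^2\big)\phi\,\mathrm{d}t$, that from $\psi(\mathrm{d}\phi)$ is $\psi A \phi\,\mathrm{d}t$, and the covariation correction is $-\sum_i \psi D_i^2 \phi\,\mathrm{d}t$; their sum is exactly zero. Hence $\mathrm{d}P(t) = 0$, and since $P(0) = \mathrm{Id}\cdot\mathrm{Id} = \mathrm{Id}$, I conclude that $\psi(t)\phi(t) = \mathrm{Id}$ for all $t\in[0,T]$ almost surely. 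Because $\phi(t)$ and $\psi(t)$ are square matrices, the existence of a left inverse forces $\phi(t)$ to be invertible with $\phi(t)^{-1} = \psi(t)$, which is the claim. I would emphasize that the drift correction $+\sum_i D_i^2$ appearing in the equation for $\psi$ is precisely what produces this cancellation, encoding the It\^o rather than Stratonovich inverse.

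The main obstacle is purely organizational: setting up the matrix-valued product rule and the covariation term with the correct ordering of the non-commuting matrices $\psi$, $A$, $D_i$ and $\phi$, since a misplaced factor would spoil the cancellation. Once the structure is recorded correctly, the vanishing of the drift is automatic and the rest is immediate.
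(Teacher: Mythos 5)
Your proposal is correct and follows exactly the route the paper intends: the paper only remarks that the lemma follows from ``componentwise applications of the It\^o formula'' in the spirit of Yong--Zhou, and your computation — zero martingale part plus the drift cancellation $-\psi(A-\sum_i D_i^2)\phi + \psi A\phi - \sum_i \psi D_i^2\phi = 0$ coming from the quadratic covariation — is precisely that argument, with the integrability handled by the same BDG/H\"older/Gr\"onwall scheme used elsewhere in the paper. No gaps; you have in fact supplied more detail than the paper does.
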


Thanks to \cite[Theorem 6.14, Chapter 1]{Yong1999}, any solution $y_{g_1,g_2} \in C^2_{\mathcal{F}}([0,T]\times\Omega,\mathbb{R}^n)$ of \eqref{eq:LSDE} can be expressed as
\begin{equation}
\label{eq:LinearisedRepresentation}
\begin{aligned}
y_{g_1,g_2}(t) = \phi(t) \int^t_0 \psi(s) \bigg( g_1(s) - \sum^d_{i=1} D_i(s) g_2^i(s) \bigg) \; \mathrm{d}s + \phi(t) \sum^d_{i=1} \int^t_0 \psi(s) g_2^i(s) \mathrm{d}W^i_s,
\end{aligned}
\end{equation}
for all times $t \in[0,T]$. At this stage, let it be noted that the stochastic process
\begin{equation*}
t \in [0,T] \mapsto \mathbb{E} \Bigg[ \phi(T)^{\top} \bigg( \xi^* \mathfrak{p}_0 \nabla \varphi_0(x^*(T)) + \sum^{\ell}_{i=1} \mathfrak{p}_i \nabla \varphi_i(x^*(T)) \bigg) \bigg| \, \mathcal{F}_t \Bigg] 
\end{equation*}
is a martingale that is uniformly bounded in $L^2$ as a direct consequence of Jensen's and H\"older's inequalities. Therefore, thanks to Theorem \ref{thm:Mart}, there exist a vector $N \in \mathbb{R}^n$ and a process $\mu \in L^2_{\mathcal{F}}([0,T] \times \Omega,\mathbb{R}^{n \times d})$ such that
\begin{equation}
\label{eq:NChiDef}
\begin{aligned}
    N + \sum^d_{j=1} \chi_j(t) &\triangleq N + \sum^d_{j=1} \int^t_0 \mu_j(s) \; \mathrm{d}W^j_s \\
    &= \mathbb{E}\Bigg[ \phi(T)^{\top} \bigg( \xi^* \mathfrak{p}_0 \nabla \varphi_0(x^*(T)) + \sum^{\ell}_{i=1} \mathfrak{p}_i \nabla \varphi_i(x^*(T)) \bigg) \bigg| \, \mathcal{F}_t \Bigg],
\end{aligned}
\end{equation}
for every $t \in [0,T]$. It then follows from \eqref{eq:LinearisedRepresentation} and \eqref{eq:NChiDef} that 
\begin{align*}
    &\mathbb{E} \Bigg[ \bigg( \xi^* \mathfrak{p}_0 \nabla \varphi_0(x^*(T)) + \sum^{\ell}_{i=1} \mathfrak{p}_i \nabla \varphi_i(x^*(T)) \bigg)\cdot y_{g_1,g_2}(T) \Bigg] \\
    &= \mathbb{E} \Bigg[ \mathbb{E} \bigg[ \phi(T)^{\top} \bigg( \xi^* \mathfrak{p}_0 \nabla \varphi_0(x^*(T)) + \sum^{\ell}_{i=1} \mathfrak{p}_i \nabla \varphi_i(x^*(T)) \bigg) \bigg| \, \mathcal{F}_T \bigg] \cdot \\
    &\qquad \qquad  \bigg( \int^T_0 \psi(s) \bigg( g_1(s) - \sum^d_{i=1} D_i(s) g_2^i(s) \bigg) \mathrm{d}s + \sum^d_{i=1} \int^T_0 \psi(s) g_2^i(s)  \mathrm{d}W^i_s \bigg) \Bigg] \\
    &= \mathbb{E}\Bigg[ \int^T_0 N \cdot \psi(s) \bigg( g_1(s) - \sum^d_{i=1} D_i(s) g_2^i(s) \bigg) \mathrm{d}s \Bigg]\\
    & \qquad + \mathbb{E} \Bigg[ \sum^d_{j=1} \bigg( \int^T_0 \mu_j(s) \mathrm{d}W^j_s \bigg) \cdot \bigg( \int^T_0 \psi(s) \bigg( g_1(s) - \sum^d_{i=1} D_i(s) g_2^i(s) \bigg) \mathrm{d}s \bigg) \Bigg] \\
    &\qquad + \mathbb{E}\Bigg[\sum^d_{i,j=1} \bigg( \int^T_0 \mu_j(s) \mathrm{d}W^j_s \bigg) \cdot \bigg( \int^T_0 \psi(s) g_2^i(s) \mathrm{d}W^i_s \bigg) \Bigg].
\end{align*}
At this stage, thanks to integration by parts formula of the It\^o calculus (see e.g. \cite[p. 116]{LeGall2016}), it further holds that
\begin{align*}
    \mathbb{E} \Bigg[ \sum^d_{j=1} \bigg( \int^T_0 \mu_j(s) \mathrm{d}W^j_s \bigg) \cdot & \bigg( \int^T_0 \psi(s) \bigg( g_1(s) - \sum^d_{i=1} D_i(s) g_2^i(s) \bigg) \mathrm{d}s \bigg) \Bigg] \\
    &= \mathbb{E} \Bigg[ \sum^d_{j=1} \int^T_0 \chi_j(s) \cdot \psi(s) \bigg( g_1(s) - \sum^d_{i=1} D_i(s) g_2^i(s) \bigg) \mathrm{d}s \Bigg] ,
\end{align*}
as well as
\begin{align*}
    & \mathbb{E}\Bigg[\sum^d_{i,j=1} \bigg( \int^T_0 \mu_j(s) \mathrm{d}W^j_s \bigg) \cdot \bigg( \int^T_0 \psi(s) g_2^i(s) \mathrm{d}W^i_s \bigg) \Bigg] \\
    & = \mathbb{E} \bigg[ \sum^d_{i,j=1} \int_{0}^T \mu_j(s) \cdot \psi(s)g^i_2(s) \mathrm{d} \langle W^i,W^j \rangle_s \bigg] = \mathbb{E} \bigg[ \sum^d_{i,j=1} \int_{0}^T \mu_j(s) \cdot \psi(s)g^i_2(s) \mathrm{d} s \bigg]
\end{align*}
by \cite[Section 4.3 and Section 5.1 Formula (5.7)]{LeGall2016}, wherein $\langle \cdot , \cdot \rangle$ stands for standard the quadratic variation of a continuous martingale (see e.g. \cite[Section 4.3]{LeGall2016}). Merging the previous computations finally leads to
\begin{align*}
    &\mathbb{E} \Bigg[ \bigg( \xi^* \mathfrak{p}_0 \nabla \varphi_0(x^*(T)) + \sum^{\ell}_{i=1} \mathfrak{p}_i \nabla \varphi_i(x^*(T)) \bigg)\cdot y_{g_1,g_2}(T) \Bigg] \\
    &= \mathbb{E} \Bigg[ \int^T_0 \bigg( N + \sum^d_{j=1} \chi_j(s) \bigg) \cdot \psi(s) g_1(s) \mathrm{d}s \Bigg] \\
    &\qquad + \mathbb{E} \Bigg[ \int^T_0 \sum^d_{i=1} \Bigg( \mu_i(s) \cdot \psi(s) g_2^i(s) - \bigg( N + \sum^d_{j=1} \chi_j(s) \bigg) \cdot \psi(s) D_i(s) g_2^i(s) \bigg)  \mathrm{d}s \Bigg].
\end{align*}
Notice at this point that, by defining the costate curves
\begin{equation}
\label{eq:CostateDef}
\left\{
\begin{aligned}
    p^*(t) & \triangleq \psi(t)^{\top} \bigg( N + \sum^d_{j=1} \chi_j(t) \bigg), \\
    q^*(t) & \triangleq \bigg[ \Big( \psi(t)^{\top} \mu_1(t)  - D_1(t)^{\top} p^*(t) \Big) \Big| \dots \Big| \Big( \psi(t)^{\top} \mu_d(t)  - D_d(t)^{\top} p^*(t) \Big) \bigg],
\end{aligned}
\right.
\end{equation}
for almost every $t\in[0,T]$, the variational inequality \eqref{eq:uniVar} can be rewritten as
\begin{equation}
\label{eq:VariationalInterm}
\begin{aligned}
    & \mathbb{E} \bigg[ \int^T_0 p^*(t) \cdot g_1(s) \mathrm{d}s + \int^T_0 \sum^d_{i=1} q_i^*(t) \cdot g_2(t) \mathrm{d}s \bigg] \leq 0
\end{aligned}
\end{equation}
for every selection $t \in [0,T] \mapsto (g_1(t),g_2(t)) \in T_{F(t,x^*(t))} \big( (f,\sigma)(t,x^*(t),u^*(t)) \big)$.  

We are now going to show that \eqref{eq:PointwisePMPIneq} in fact yields the maximization condition. For any $u \in \mathcal{U}$, observe that the maps defined by 
\begin{equation*}
\hspace{3cm} \left\{
\begin{aligned}
& g^u_1(t) \triangleq f(t,x^*(t),u(t)) - f(t,x^*(t),u^*(t)) , \\
& g^u_2(t) \triangleq \sigma(t,x^*(t),u(t)) - \sigma(t,x^*(t),u^*(t)), 
\end{aligned}
\right.
\end{equation*}
for almost every $t \in[0,T]$ are such that 
\begin{equation*}
(g^u_1,g^u_2)(t) \in T_{F(t,x^*(t))} \big( (f,\sigma)(t,x^*(t),u^*(t)) \big)
\end{equation*}
by construction, since we assumed that the sets $F(t,x^*(t)) \subset \mathbb{R}^{n+d \times n}$ are convex. This together with \eqref{eq:VariationalInterm} and the definition \eqref{eq:Hamiltonian} of the Hamiltonian implies that
\begin{equation}
\label{eq:PointwisePMPIneq}
\mathbb{E} \Bigg[ \int^T_0 \Big( H(s,x^*(s),p^*(s),q^*(s),u(s)) - H(s,x^*(s),p^*(s),q^*(s),u^*(s)) \Big) \mathrm{d}s \Bigg] \leq 0,
\end{equation}
for every $u \in \mathcal{U}$. Given an integer $m \geq 1$, consider the closed subset of control values
\begin{equation}
\label{eq:TildeUDef}
\begin{aligned}
\tilde{U}_m(t,\omega) := \bigg\{ u \in U \, : \,\; & H(t,\omega,u,x^*(t,\omega),p^*(t,\omega),q^*(t,\omega)) \geq \\
& H(t,\omega,x^*(t,\omega),u^*(t,\omega),p^*(t,\omega),q^*(t,\omega)) + \tfrac{1}{m}   \bigg\}, 
\end{aligned}
\end{equation}
and suppose by contradiction that the corresponding set
\begin{equation}
\label{eq:TildeFDef}
\tilde{\mathcal{F}}_m := \Big\{ (t,\omega) \in [0,T] \times \Omega \, : \, \tilde{U}_m(t,\omega) \neq\emptyset \Big\} \subset [0,T] \times \Omega, 
\end{equation}
which, by construction, is measurable with respect to the progressive $\sigma$-algebra generated by $\mathcal{F}$, has positive measure. Then, by choosing any admissible control signal $\tilde{u}_m : [0,T]\times\Omega \to U$ such that $\tilde{u}_m(t,\omega) \in \tilde{U}_m(t,\omega)$ for almost every $(t,\omega) \in\tilde{\mathcal{F}}_m$ and $\tilde{u}_m(t,\omega) = u^*(t,\omega)$ otherwise, it holds that 
{\small
\begin{equation*}
\begin{aligned}
\mathbb{E} \bigg[ \int^T_0 \hspace{-0.1cm} \Big( H(s,x^*(s),p^*(s),q^*(s),u(s)) - H(s,x^*(s),p^*(s),q^*(s),u^*(s)) \Big) \mathrm{d}s \bigg] \hspace{-0.075cm} \geq \hspace{-0.075cm} \tfrac{1}{m} (\mathrm{d}t \otimes \mathbb{P})(\tilde{\mathcal{F}}_m),
\end{aligned}
\end{equation*}}
which contradicts \eqref{eq:PointwisePMPIneq}. Whence, the set defined by 
\begin{equation*}
\mathcal{\tilde{F}}_{\infty} := \bigcup_{m \geq 1} \mathcal{\tilde{F}}_m
\end{equation*}
necessarily has zero $\mathrm{d}t \otimes \mathbb{P}$-measure, which together with \eqref{eq:TildeUDef}-\eqref{eq:TildeFDef} implies that the maximisation condition \eqref{eq:PMPMaximisation} of the PMP holds. 

To conclude, we now shift our focus to the dynamics of the costate variable. First, note that $p^*$ is adapted to the filtration $\mathcal{F}$ by construction, and that it has continuous sample-paths. Moreover, we may infer from a straightforward use of Doob's, Jensen's, and H\"older's inequalities, along with the facts that $\phi , \psi \in C^{\beta}([0,T] \times \Omega,\mathbb{R}^{n \times n})$ for every $\beta \in [2,\infty)$ and $\xi^* \in L^{\infty}_{\mathcal{F}_T}(\Omega,\mathbb{R})$, that $p \in C^2_{\mathcal{F}}([0,T] \times \Omega,\mathbb{R}^n)$. In addition, we have by It\^o's formula that
\begin{align*}
    \psi(t)^{\top} \bigg( \sum^d_{j=1} \int^t_0 \mu_j(s) \mathrm{d}W^j_s \bigg) & = \sum^d_{j=1} \int^t_0 \psi(s)^{\top} \mu_j(s) \mathrm{d}W^j_s - \sum^d_{i=1} \int^t_0 D_i(s)^{\top} \psi(s)^{\top} \mu_i(s) \mathrm{d}s \\
    & \quad - \sum^d_{i=1} \int^t_0 D_i(s)^{\top} \psi(s)^{\top} \bigg( \sum^d_{j=1} \chi_j(s) \bigg) \mathrm{d}W^i_s \\
    & \quad - \int^t_0 \bigg( A(s)^{\top} - \sum^d_{i=1} (D_i(s)^2)^{\top} \bigg) \psi(s)^{\top} \bigg( \sum^d_{j=1} \chi_j(s) \bigg) \mathrm{d}s
\end{align*}
for all times $t \in [0,T]$. This, combined with the definition \eqref{eq:CostateDef} of $(p^*,q^*)$ along with that of the Hamiltonian in \eqref{eq:Hamiltonian} allows us to deduce that
\begin{equation}
\label{eq:IntermediateCostate}
p^*(t) = N - \int^t_0 \frac{\partial H}{\partial x} \big( s,x^*(s),u^*(s),p^*(s),q^*(s) \big) \mathrm{d}s + \int^t_0 q^*(s) \mathrm{d}W_s,
\end{equation}
where we also used the fact that $\psi(0) = \textnormal{Id}$ by construction. Regarding the terminal condition, observe that owing to \eqref{eq:NChiDef} along with \eqref{eq:CostateDef}, there holds 
\begin{equation*}
\begin{aligned}
p^*(T) & = \psi(T)^{\top} \mathbb{E}\Bigg[ \phi(T)^{\top} \bigg( \xi^* \mathfrak{p}_0 \nabla \varphi_0(x^*(T)) + \sum^{\ell}_{i=1} \mathfrak{p}_i \nabla \varphi_i(x^*(T)) \bigg) \bigg| \, \mathcal{F}_T \Bigg] \\
& = \xi^* \mathfrak{p}_0 \nabla \varphi_0(x^*(T)) + \sum^{\ell}_{i=1} \mathfrak{p}_i \nabla \varphi_i(x^*(T))
\end{aligned}
\end{equation*}
because $\phi(T) = \psi(T)^{-1}$ and the random variable in the conditional expectation is $\mathcal{F}_T$-measurable, we precisely recover the adjoint dynamics posited in \eqref{eq:PMPAdjoint} of Theorem \ref{thm:controlledDiff}. Finally, by repeating the argument developed e.g. in the proof of \cite[Theorem 2.2, Section 7.2]{Yong1999}, we obtain that $q^* \in L^2_{\mathcal{F}}([0,T] \times \Omega,\mathbb{R}^{n \times d})$.


\subsection{Uncontrolled Diffusion}
\label{sec:uncontrolledDiff}

We now turn our attention towards the simpler scenario in which the control variable does not appear in the diffusion, namely $\sigma(t,\omega,x,u) \equiv \sigma(t,\omega,x)$. Unlike the previous situation, we may relax our assumptions and obtain the PMP without hypothesis \ref{hyp:ACD}.

\begin{thm}[Risk-averse PMP for \eqref{eq:OCP} with uncontrolled diffusion] \label{Theo:uncontrolledDiff}
Suppose that the diffusion term is independent of the control variable, that hypotheses \textnormal{\ref{hyp:MAS}} and \textnormal{\ref{hyp:MAC}} are satisfied, and let $(x^*,u^*)$ be a local minimum for \textnormal{\eqref{eq:OCP}}. Then, the conclusions of Theorem \ref{thm:controlledDiff} hold.
\end{thm}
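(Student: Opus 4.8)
The plan is to follow the very same five-step scheme used to prove Theorem \ref{thm:controlledDiff}, the only substantive modification being the way in which the convexity of the velocity sets is obtained. Recall that, in the controlled-diffusion case, the convexity assumption \ref{hyp:ACD} was invoked in a structural manner essentially once — at \eqref{eq:Construction3} in the proof of the variational linearization (Theorem \ref{theo:var}) — in order to realize perturbed relaxed velocities by genuine admissible ones. When the diffusion is control-free, this convexity can be recovered \emph{for free} by passing to the closed convex hull of the drift velocity set and then invoking the relaxation theorem (Theorem \ref{theo:Relaxation}); crucially, this convexification is now legitimate precisely because the control no longer enters the It\^o integral, which is exactly the operation forbidden by the obstruction exhibited in Remark \ref{rmk:Ito}.

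Concretely, in Step 1 I would define the drift velocity set $F(t,\omega,x) \triangleq \{ f(t,\omega,x,u) : u \in U \} \subset \mathbb{R}^n$, which is integrably bounded and progressively measurable-Lipschitz with nonempty compact images but, unlike before, need \emph{not} be convex. Since the closed convex hull of such a multifunction retains these regularity properties, all the tangent-cone constructions of Step 1 would be carried out relative to $\overline{\textnormal{co}} F$ rather than $F$, so that one linearizes along progressively measurable selections $t \mapsto g_1(t) \in T_{\overline{\textnormal{co}} F(t,x^*(t))}( f(t,x^*(t),u^*(t)) )$. The associated linearized dynamics are \eqref{eq:LSDE} with the diffusion perturbation switched off, i.e. $g_2 \equiv 0$, since $\sigma$ does not depend on the control; the matrices $D_i(t) = \tfrac{\partial \sigma_i}{\partial x}(t,x^*(t))$ nonetheless survive in the diffusion term because $\sigma$ still depends on the state.

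The heart of the argument, and the step I expect to be most delicate, is re-establishing the variational linearization (the analog of Theorem \ref{theo:var}) in the absence of \ref{hyp:ACD}. I would proceed in two stages. First, as in the controlled case, I construct the auxiliary trajectory $\bar x^{\varepsilon}$ driven by the perturbed drift $f(t,x,u^*) + \varepsilon g_1(t)$ together with the unperturbed diffusion $\sigma(t,x)$, and obtain the first-order expansion $\bar x^{\varepsilon} \approx x^* + \varepsilon y_{g_1}$ via the same Burkholder-Davis-Gundy and Gronwall estimates. Because $g_1(t)$ is tangent to the convex set $\overline{\textnormal{co}} F(t,x^*(t))$ at $f(t,x^*(t),u^*(t))$, the convex-combination construction of \eqref{eq:Construction3} applies verbatim with $\overline{\textnormal{co}} F$ in place of $F$, and the Filippov estimate of Theorem \ref{theo:Filippov} — applied to the convexified inclusion — yields a nearby solution of the relaxed dynamics at order $o(\varepsilon)$. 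In the second stage, I invoke the relaxation theorem (Theorem \ref{theo:Relaxation}), with tolerance chosen of order $\varepsilon^2$, to approximate this relaxed solution in the $C^2_{\mathcal{F}}$-topology by a genuine solution $x^{\varepsilon}$ of \eqref{eq:SDI'} — hence by an admissible trajectory of \eqref{eq:SDE} associated with some control $u^{\varepsilon}$. The main technical obstacle is coordinating the two approximations so that the relaxation error is $o(\varepsilon)$ relative to the linearization scale; this is settled by tying the relaxation tolerance to $\varepsilon$ through the quantitative estimates of Theorems \ref{theo:Filippov} and \ref{theo:Relaxation}, and is exactly where the control-free structure of the diffusion takes over the role played by \ref{hyp:ACD}.

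With the variational linearization in hand, the remaining steps transfer essentially verbatim. The separation arguments of Steps 2 and 3, and the Sion-minimax-based universal separation of Step 4, only rely on the reachable set being a convex cone and on the continuity of $y_{g_1,g_2}$ in $(g_1,g_2)$ from Lemma \ref{lemma:contY}, both of which persist with $T_{\overline{\textnormal{co}} F}$ and $g_2 \equiv 0$. In Step 5, the representation formula \eqref{eq:LinearisedRepresentation}, the martingale-representation construction of $(p^*,q^*)$, and the backward adjoint equation \eqref{eq:PMPAdjoint} are unchanged once the $g_2$-terms are dropped. Finally, to recover the maximization condition \eqref{eq:PMPMaximisation}, I would use the tangent direction $g_1^u(t) \triangleq f(t,x^*(t),u(t)) - f(t,x^*(t),u^*(t))$ for $u \in \mathcal{U}$: since both $f(t,x^*(t),u(t))$ and $f(t,x^*(t),u^*(t))$ lie in the convex set $\overline{\textnormal{co}} F(t,x^*(t))$, their difference is a genuine tangent vector at $f(t,x^*(t),u^*(t))$, and the pointwise-maximization argument then closes as before, with $\sigma$ contributing only the control-independent term $\sum_{i} q_i^* \cdot \sigma_i(t,x^*(t))$ to the Hamiltonian.
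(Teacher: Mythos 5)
Your proposal is correct and follows essentially the same route as the paper: replace the convexity hypothesis \ref{hyp:ACD} by linearizing along tangents to $\overline{\textnormal{co}}\,F$ for the drift-only velocity set, prove the variational linearization in two stages (first the convexified inclusion via the Filippov estimates, then the relaxation theorem \ref{theo:Relaxation} with a tolerance tied to $\varepsilon$ so the error is $o(\varepsilon)$), and transfer Steps 2--5 verbatim with $g_2 \equiv 0$. This is precisely the paper's Theorem \ref{thm:varUncontrolled} and the surrounding argument, including the observation that the relaxation step is legitimate only because the control no longer enters the It\^o integral.
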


The proof of Theorem \ref{Theo:uncontrolledDiff} is almost identical to that of Theorem \ref{thm:controlledDiff}, and we shall thus only highlight the few key modifications needed with respect to the argument developed in Section \ref{sec:controlledDiff}. In this context, we will work with the set-valued map
\begin{equation*}
    F : (t,\omega,x) \in[0,T] \times \Omega \times \mathbb{R}^n \rightrightarrows \Big\{ f(t,\omega,x,u) \in \mathbb{R}^n : \ u \in U \Big\} \subset \mathbb{R}^n.
\end{equation*}
Adopting the convention introduced in Remark \ref{thm:RiskMeasures}, the set-valued maps $(t,\omega,x) \in[0,T]\times\Omega\times\mathbb{R}^n \rightrightarrows F(t,x)$ and $(t,\omega,x) \in[0,T]\times\Omega\times\mathbb{R}^n \rightrightarrows \overline{\textnormal{co}} F(t,x)$ have nonempty compact images, are integrably bounded and progressively measurable-Lipschitz under hypotheses \ref{hyp:MAS}. Thus by Theorem \ref{thm:Selections}, the progressive multifunction $t \in[0,T] \rightrightarrows T_{\overline{\textnormal{co}} F(t,x^*(t))}\big( f(t,x^*(t),u^*(t)) \big)$ admits progressively measurable selections 
\begin{equation*}
t \in[0,T]\mapsto g(t) \in T_{\overline{\textnormal{co}} F(t,x^*(t))} f(t,x^*(t),u^*(t)).     
\end{equation*}
In what follows given such a selection, we denote by $y_{g} \in C^2_{\mathcal{F}}([0,T] \times \Omega,\mathbb{R}^n)$ the unique (up to stochastic indistinguishability) solution of the stochastic differential equation
\begin{equation}
\label{eq:LSDEbis}
\tag{$\textnormal{LSDE}_{g}$}
\left\{
\begin{aligned}
\mathrm{d}y(t) & = \Big( A(t) y(t) + g(t) \Big) + \sum^d_{i=1} D_i(t) y(t) \mathrm{d}W^i_t , \\
y(0) & = 0,
\end{aligned}
\right.
\end{equation}
where we used the condensed notations
$$
A(t) \triangleq \frac{\partial f}{\partial x}(t,x^*(t),u^*(t)) \qquad \text{and} \qquad D_i(t) \triangleq \frac{\partial \sigma_i}{\partial x}(t,x^*(t)),
$$
for almost every $t\in[0,T]$ and each $i \in\{ 1,\dots,d\}$. Thanks to the relaxation property of Theorem \ref{theo:Relaxation}, the variational linearization studied in Theorem \ref{theo:var} can be adapted and improved as follows for stochastic dynamics with uncontrolled diffusions.

\begin{thm}[Variational linearization for uncontrolled diffusions] 
\label{thm:varUncontrolled}
For any progressively measurable selection $t \in [0,T] \mapsto g(t) \in T_{\overline{\textnormal{co}} F(t,x^*(t))} f(t,x(t),u^*(t))$ and each $\varepsilon > 0$, there exists a solution $x^{\varepsilon}_{g} \in C^2_{\mathcal{F}}([0,T] \times \Omega,\mathbb{R}^n)$ to \eqref{eq:SDI'} such that
\begin{equation*}
\underset{\varepsilon \rightarrow 0^+}{\lim} \frac{1}{\varepsilon} \mathbb{E} \bigg[ \, \underset{t \in [0,T]}{\sup} \big\| x^{\varepsilon}_{g}(t) - x^*(t) - \varepsilon y_g(t) \big\| \bigg] = 0 ,
\end{equation*}
where $y_{g} \in C^2_{\mathcal{F}}([0,T] \times \Omega,\mathbb{R}^n)$ is the unique solution of \eqref{eq:LSDEbis}.
\end{thm}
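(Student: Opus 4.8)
The plan is to reduce the statement to Theorem \ref{theo:var}, applied to the \emph{convexified} velocity set $\overline{\textnormal{co}} F$, and then to de-relax the resulting trajectory by invoking the relaxation principle of Theorem \ref{theo:Relaxation}. The whole difficulty, compared with the controlled-diffusion case, is that the tangent selection $g$ is now only tangent to the convex hull $\overline{\textnormal{co}} F$ and not to $F$ itself, so that the Filippov-based construction of Theorem \ref{theo:var} cannot be performed directly inside $F$. The observation that makes this harmless is that $g$ enters only the \emph{drift} of \eqref{eq:LSDEbis}, that is, a Lebesgue integral, for which Aumann's convexity principle --- and hence Theorem \ref{theo:Relaxation} --- holds; this is exactly the feature that fails for the It\^o integral, as exhibited in Remark \ref{rmk:Ito}, which is why the present, more direct route is available only when the diffusion is control-free.

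First I would observe that, since the sets $\overline{\textnormal{co}} F(t,x)$ are convex and, as recorded above, integrably bounded and progressively measurable-Lipschitz with nonempty compact images, the entire argument of Theorem \ref{theo:var} applies verbatim with $F$ replaced by the drift-only set $\overline{\textnormal{co}} F$ and with the unperturbed diffusion $\sigma(t,x)$ in place of the controlled one. Concretely, the auxiliary trajectory $\bar x^{\varepsilon}_g$ solving $\mathrm{d}x = (f(t,x,u^*) + \varepsilon g)\mathrm{d}t + \sigma(t,x)\mathrm{d}W_t$ obeys the same Gronw\"all and Burkholder-Davis-Gundy estimates as in Theorem \ref{theo:var}, now with the diffusion perturbation set to zero, so that $\bar x^{\varepsilon}_g \to x^*$ at rate $\varepsilon$ and $\bar x^{\varepsilon}_g - x^* - \varepsilon y_g$ is $o(\varepsilon)$; here $y_g$ is precisely the solution of \eqref{eq:LSDEbis}, which is the zero-diffusion-perturbation specialisation of \eqref{eq:LSDE}. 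The convexity of $\overline{\textnormal{co}} F$ then licenses the same convex-combination-plus-Filippov step --- with the $\sqrt{\varepsilon}$ rescaling that is crucial to beating the $O(\varepsilon)$ Lipschitz mismatch between $F(t,x^*)$ and $F(t,\bar x^{\varepsilon}_g)$ --- producing a genuine solution $\hat x^{\varepsilon}_g$ of the \emph{relaxed} inclusion $\mathrm{d}x \in \overline{\textnormal{co}} F(t,x)\mathrm{d}t + \sigma(t,x)\mathrm{d}W_t$ such that
$$
\underset{\varepsilon \to 0^+}{\lim} \ \frac{1}{\varepsilon} \, \mathbb{E}\bigg[ \underset{t\in[0,T]}{\sup} \big\| \hat x^{\varepsilon}_g(t) - x^*(t) - \varepsilon y_g(t) \big\| \bigg] = 0 .
$$

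Second, I would de-relax $\hat x^{\varepsilon}_g$ into a solution of \eqref{eq:SDI'}. Since $\hat x^{\varepsilon}_g$ solves the relaxed inclusion, Theorem \ref{theo:Relaxation} furnishes, for \emph{any} prescribed tolerance, a solution of \eqref{eq:SDI'} lying within that tolerance in the $C^2_{\mathcal{F}}$-norm. The single quantitative point to watch is that the tolerance must be taken superlinearly small in $\varepsilon$: choosing it equal to $\varepsilon^2$ yields $x^{\varepsilon}_g$ solving \eqref{eq:SDI'} with $\| \hat x^{\varepsilon}_g - x^{\varepsilon}_g \|_{C^2_{\mathcal{F}}} \le \varepsilon^2$, whence $\tfrac{1}{\varepsilon}\mathbb{E}[\sup_t \| x^{\varepsilon}_g - \hat x^{\varepsilon}_g \|] \le \tfrac{1}{\varepsilon}\| x^{\varepsilon}_g - \hat x^{\varepsilon}_g \|_{C^2_{\mathcal{F}}} \le \varepsilon \to 0$ by Jensen's inequality. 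Combining this with the previous display through the triangle inequality delivers the claimed limit for $x^{\varepsilon}_g$.

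The step I expect to be the genuine crux is the reduction in the second paragraph: one must check that the proof of Theorem \ref{theo:var} truly carries over to $\overline{\textnormal{co}} F$, the delicate point being that convexity of the velocity set --- an \emph{assumption} (\ref{hyp:ACD}) in the controlled case --- is here automatic for the convex hull, while the absence of control in the diffusion means that only the drift is convexified and the It\^o term is left untouched. A naive alternative, namely applying the Filippov estimate of Theorem \ref{theo:Filippov} directly to $\bar x^{\varepsilon}_g$ against the relaxed inclusion, is tempting but fails to reach the required rate, since the resulting drift mismatch is merely $O(\varepsilon)$ rather than $o(\varepsilon)$; it is precisely the $\sqrt{\varepsilon}$ scaling internal to Theorem \ref{theo:var} that repairs this. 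Everything else reduces to transcribing the estimates of Theorem \ref{theo:var} with a vanishing diffusion perturbation and to the free, and hence routine, choice of the relaxation tolerance.
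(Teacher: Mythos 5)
Your proposal is correct and follows essentially the same route as the paper: the paper's proof likewise first repeats the argument of Theorem \ref{theo:var} with the drift-only convexified set $\overline{\textnormal{co}}\, F$ to produce a trajectory of the relaxed inclusion satisfying the first-order estimate, and then invokes Theorem \ref{theo:Relaxation} with a tolerance of order $\varepsilon^2$ (stated there as $\tfrac{1}{\varepsilon^2}\mathbb{E}[\sup_t \|x^\varepsilon_g - \bar x^\varepsilon_g\|^2] \to 0$) to de-relax it into a solution of \eqref{eq:SDI'}. Your remarks on why the reduction is legitimate here but not in the controlled-diffusion case (the Aumann/It\^o obstruction of Remark \ref{rmk:Ito}) accurately identify the point the paper leaves implicit.
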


\begin{proof}
By repeating the argument outlined earlier in the proof of Theorem \ref{theo:var}, one may readily check that there exists a solution $\bar x^{\varepsilon}_{g} \in C^2_{\mathcal{F}}([0,T] \times \Omega,\mathbb{R}^n)$ to the stochastic differential inclusion
\begin{equation*}
\left\{
\begin{aligned}
\mathrm{d}x(t) & \in \overline{\textnormal{co}} F(t,x(t))  \mathrm{d}t + \sigma(t,x(t)) dW_t, \\
x(0) & = x_0,
\end{aligned}
\right.
\end{equation*}
which satisfies 
\begin{equation*}
\underset{\varepsilon \rightarrow 0^+}{\lim} \ \frac{1}{\varepsilon} \mathbb{E}\bigg[\, \underset{t \in [0,T]}{\sup} \| \bar x^{\varepsilon}_{g}(t) - x^*(t) - \varepsilon y_{g}(t) \| \bigg] = 0.
\end{equation*}
Besides by Theorem \ref{theo:Relaxation}, there exists a solution $x^{\varepsilon}_{g} \in C^2_{\mathcal{F}}([0,T] \times \Omega,\mathbb{R}^n)$ to \eqref{eq:SDI'} that is such that
$$
\underset{\varepsilon \rightarrow 0^+}{\lim} \ \frac{1}{\varepsilon^2} \ \mathbb{E} \bigg[ \, \underset{t \in [0,T]}{\sup} \| x^{\varepsilon}_{g}(t) - \bar x^{\varepsilon}_{g}(t) \|^2 \bigg] = 0 ,
$$
from whence the thesis follows.
\end{proof}

By repeating the arguments of Step 1 and Step 2 of Section \ref{sec:controlledDiff} while using the variational linearization of Theorem \ref{thm:varUncontrolled} instead of Theorem \ref{theo:var}, one can again recover the existence of Lagrange multipliers $(\mathfrak{p}_0,\dots,\mathfrak{p}_{\ell}) \in \{0,-1\} \times \mathbb{R}_-^{\ell}$ satisfying
\begin{equation*}
(\mathfrak{p}_0,\dots,\mathfrak{p}_{\ell}) \neq 0 \qquad \text{and} \qquad \mathfrak{p}_i \mathbb{E}\big[ \varphi_i(x^*(T)) \big] = 0 ~~ \text{for every $i \in \{1,\dots,\ell\}$},
\end{equation*} 
such that the variational inequality 
\begin{equation} 
\label{eq:varWeakUncontrolled}
\underset{\xi \in \partial \rho (\varphi_0(x^*(T)))}{\inf} \mathbb{E} \Bigg[ \bigg( \xi \mathfrak{p}_0 \nabla \varphi_0(x^*(T)) + \sum^{\ell}_{i=1} \mathfrak{p}_i \nabla \varphi_i(x^*(T)) \bigg) \cdot y_{g_1}(T) \Bigg] \le 0,
\end{equation}
holds for any progressively measurable selection
$$t \in [0,T] \mapsto g(t) \in T_{\overline{\textnormal{co}} F(t,x^*(t))} f(t,x^*(t),u^*(t)) .
$$
From there on, one can prove the PMP by repeating verbatim the arguments elaborated in Step 3, Step 4 and Step 5 of Section \ref{sec:controlledDiff}, thus details are skipped.


\section{Examples of application}
\label{sec:Example}

\setcounter{equation}{0} \renewcommand{\theequation}{\thesection.\arabic{equation}}

In this section, we briefly discuss general examples of risk functions and risk-averse stochastic optimal control problems which are encompassed by our results. In this context, we will consider the simple case in which $(x^*,u^*)$ is  a local minimum for \textnormal{\eqref{eq:OCP}} in the case where there is no control in the diffusion and no final-time constraints. Then, Theorem \ref{Theo:uncontrolledDiff} shall provide us with the existence of stochastic processes $(p^*,q^*) \in C^2_{\mathcal{F}}([0,T] \times \Omega,\mathbb{R}^n) \times L^2_{\mathcal{F}}([0,T] \times \Omega,\mathbb{R}^{d \times n})$ and a risk parameter $\xi^* \in \partial \rho \big( \varphi_0(x^*(T)) \big)$ for which \eqref{eq:PMPMaximisationOther}, \eqref{eq:PMPAdjoint}, and \eqref{eq:PMPMaximisation} hold with $(\mathfrak{p}_0,\dots,\mathfrak{p}_{\ell}) = (-1,0,\dots,0)$.


\subsection{Examples of risk-parameters characterization}

Suppose at first that $\rho : L^1_{\mathcal{F}_T}(\Omega,\mathbb{R}) \rightarrow \mathbb{R}$ is Fr\'echet differentiable, as it was for instance assumed in \cite{Isohatala2021}. This situation includes for instance the $\log$-$\exp$ utility function and the mean-variance risk measures, see e.g. \cite{Shapiro2021}. In that case, $\partial \rho(Z) = \{ \nabla \rho(Z) \}$ for every $Z \in L^1_{\mathcal{F}_T}(\Omega,\mathbb{R})$, and the result of Theorem \ref{thm:controlledDiff} hold with the uniquely determined risk parameter
\begin{equation*}
\xi^* = \nabla \rho\big( \varphi_0(x^*(T)) \big) . 
\end{equation*}
Suppose now that $\rho: L^1_{\mathcal{F}_T}(\Omega,\mathbb{R}) \rightarrow \mathbb{R}$ is the prototypical example of subdifferentiable risk measure given by the \textit{Average-Value-at-Risk} of a random variable $Z \in L^1_{\mathcal{F}_T}(\Omega,\mathbb{R})$ with level $\alpha \in (0,1]$, namely
\begin{equation}
\label{eq:AvarDef}
\rho_{\alpha}(Z) = \textnormal{AV@R}_{\alpha}(Z) \triangleq \underset{t \in \mathbb{R}}{\inf} \ \left( t + \frac{1}{\alpha} \mathbb{E}\big[ \max ( Z - t , 0 ) \big] \right).
\end{equation}
In that case, the results of Theorem \ref{thm:controlledDiff} hold for some $\xi^* \in \partial \rho\big( \varphi_0(x^*(T)) \big)$, which satisfies in particular \eqref{eq:PMPMaximisationOther}. From \cite[Example 6.16]{Shapiro2021}, there exists a $(1-\alpha)$-quantile
\begin{equation*}
\begin{aligned}
Q_{1-\alpha}(\varphi_0(x^*(T))) \in \bigg[ \, & \inf \Big\{t \in \mathbb{R} ~:~  H_{\varphi_0(x^*(T))}(t) \geq (1-\alpha) \Big\} , \\
& \sup \Big\{t \in \mathbb{R} ~:~  H_{\varphi_0(x^*(T))}(t) \leq (1-\alpha) \Big\} \, \bigg]
\end{aligned}
\end{equation*}
of the cumulative distribution function $H_{\varphi_0(x^*(T))} : \mathbb{R} \to [0,1]$ of $\varphi_0(x^*(T))$ such that
\begin{equation} \label{eq:RiskParamAVaR}
\xi^*(\omega) = \left\{
\begin{aligned}
0 & \quad \textnormal{if} \quad \varphi_0(x^*(T,\omega)) < Q_{1-\alpha}(\varphi_0(x^*(T))), \\
\lambda^* \in \left( 0 , \tfrac{1}{\alpha} \right) & \quad \textnormal{if} \quad \varphi_0(x^*(T,\omega)) = Q_{1-\alpha}(\varphi_0(x^*(T))), \\
\tfrac{1}{\alpha} & \quad \textnormal{if} \quad \varphi_0(x^*(T,\omega)) > Q_{1-\alpha}(\varphi_0(x^*(T))), \\
\end{aligned}
\right. \quad \text{with} ~~ \mathbb{E}[ \xi^* ] = 1.
\end{equation}
%

\subsection{The risk-averse double integrator problem}

In addition to the computational examples provided hereinabove, we discuss the application of the PMP of Theorem \ref{Theo:uncontrolledDiff} to the following \textit{stochastic optimal planning} problem
\begin{equation*}
\label{eq:SOP}
\tag{$\textnormal{SOP}$}
\left\{
\begin{aligned}
\underset{u \in \mathcal{U}}{\min} \ & \textnormal{AV@R$_{\alpha}$} \big( \tfrac{1}{2} |y(T)-y_T|^2\big), \\
\textnormal{s.t.} ~ & \left\{
\begin{aligned}
&\mathrm{d} y(t) = v(t) \mathrm{d}t + \mathrm{d} W_t, ~~ & y(0) = y_0,  \\
& \mathrm{d} v(t) = u(t) \mathrm{d}t, ~~ & v(0) = v_0,  
\end{aligned}
\right.
\end{aligned}
\right.
\end{equation*}
in which $y_0,v_0,y_T \in \mathbb{R}$ are given such that $y_0 < y_T$, the control set is defined by $\mathcal{U} := L^2([0,T],[-1,1]$), and the average value-at-risk is defined as in \eqref{eq:AvarDef}. 

In what follows, we show that the PMP of Theorem \ref{Theo:uncontrolledDiff} provides a necessary condition for optimal solutions of \eqref{eq:SOP} to be \textit{safe}, in the sense 
\begin{equation}
\label{eq:safe}
\textnormal{AV@R$_{\alpha}$}(y(T)) < y_T. 
\end{equation}
Our definition of safe optimal solutions to \eqref{eq:SOP} is driven by the applications, and the rationale behind it is the following. Imagine for instance that \eqref{eq:SOP} models a one-dimensional traffic lane over which one aims at steering a vehicle from some station $y_0$ to a point which lies as close as possible to the end of the lane $y_T$. It is then of paramount importance that the vehicle stops with high probability at a point which is strictly located on the left of $y_T$.

\begin{prop}[Bang-bang principle for safe trajectories]
If an optimal trajectory is safe for \eqref{eq:SOP} in the sense of \eqref{eq:safe}, then the optimal control is bang-bang. 
\end{prop}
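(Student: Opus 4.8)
The plan is to specialize the uncontrolled-diffusion PMP (Theorem \ref{Theo:uncontrolledDiff}) to \eqref{eq:SOP} and then read the control off the maximization condition. Here the state is $x=(y,v)\in\mathbb{R}^2$, so that $n=2$ and $d=1$, the drift and the control-free constant diffusion are $f(x,u)=(v,u)$ and $\sigma\equiv(1,0)^{\top}$, and the cost $\varphi_0(y,v)=\tfrac12|y-y_T|^2$ depends on $y$ only, so that $\nabla\varphi_0(x^*(T))=(y^*(T)-y_T,0)$. Since there are no final constraints the PMP is normal, i.e. $(\mathfrak{p}_0,\dots,\mathfrak{p}_\ell)=(-1,0,\dots,0)$, and the Hamiltonian \eqref{eq:Hamiltonian} reduces to $H=p_1 v+p_2 u+q_1$. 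First I would record the two immediate consequences: the $u$-dependence of $H$ is carried solely by the term $p_2 u$, so the maximization condition \eqref{eq:PMPMaximisation} reads $u^*(t)=\textnormal{sign}(p_2^*(t))$ at every $(t,\omega)$ with $p_2^*(t)\neq 0$; hence the optimal control is bang-bang precisely when $p_2^*\neq 0$ almost everywhere.

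Next I would solve the adjoint system \eqref{eq:PMPAdjoint}. Because $H$ is independent of $y$ we have $\partial H/\partial y=0$, so $p_1^*$ is a continuous, $L^2$-bounded martingale with terminal value $p_1^*(T)=\xi^*(y_T-y^*(T))$; and since $\partial H/\partial v=p_1^*$ while $\sigma$ is constant, the component $p_2^*$ solves the backward equation $\mathrm{d}p_2^*=-p_1^*\,\mathrm{d}t+q_2^*\,\mathrm{d}W$ with $p_2^*(T)=0$. Taking conditional expectations and using the martingale property of $p_1^*$ yields the explicit identity $p_2^*(t)=\mathbb{E}\big[\int_t^T p_1^*(s)\,\mathrm{d}s\mid\mathcal{F}_t\big]=(T-t)\,p_1^*(t)$. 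Consequently $\textnormal{sign}(p_2^*(t))=\textnormal{sign}(p_1^*(t))$ for $t<T$, and the whole problem reduces to proving that $p_1^*(t)\neq 0$ for almost every $(t,\omega)$.

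For this last, crucial step I would exploit the explicit Gaussian structure. As the control is deterministic, $y^*(T)=m+W_T$ with $m:=y_0+v_0T+\int_0^T(T-s)u^*(s)\,\mathrm{d}s$ deterministic; the safety hypothesis \eqref{eq:safe} together with $\textnormal{AV@R}_\alpha\geq\mathbb{E}$ forces $m\leq\textnormal{AV@R}_\alpha(y^*(T))<y_T$, so $a:=y_T-m>0$. The cost is then $\varphi_0(x^*(T))=\tfrac12(W_T-a)^2$, whose $\alpha$-tail $\{|W_T-a|>b\}$ is two-sided; the characterization \eqref{eq:RiskParamAVaR} yields $\xi^*=\tfrac1\alpha\mathds{1}_{\{|W_T-a|>b\}}$ for some threshold $b>0$, whence $p_1^*(T)=g(W_T)$ with $g(w)=\tfrac1\alpha(a-w)\mathds{1}_{\{|w-a|>b\}}$ a non-increasing, non-constant function. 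Using the Markov property, $p_1^*(t)=(P_{T-t}g)(W_t)$ is the Gaussian heat semigroup $P_{\cdot}$ applied to $g$; since convolving a monotone non-constant map with a strictly positive Gaussian kernel produces a strictly decreasing smooth function, $x\mapsto(P_{T-t}g)(x)$ has a single zero for each $t<T$. Therefore $\mathbb{P}(p_1^*(t)=0)=\mathbb{P}(W_t=\text{that zero})=0$ for every $t\in(0,T)$, and a Fubini argument on $[0,T]\times\Omega$ gives $p_1^*\neq 0$ almost everywhere. Combined with the previous step this yields $p_2^*\neq 0$ and $u^*=\textnormal{sign}(p_2^*)\in\{-1,+1\}$ almost everywhere, i.e. the optimal control is bang-bang.

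The main obstacle is precisely the non-vanishing of $p_1^*$: the PMP reduction and the identity $p_2^*=(T-t)p_1^*$ are routine, but excluding a singular arc amounts to showing that the martingale $p_1^*$ spends no time at $0$. The clean resolution above hinges on two structural facts I would need to justify with care, namely that the terminal gradient $p_1^*(T)$ is a monotone function of the single Gaussian $W_T$ (which uses both the deterministic nature of the admissible controls and the two-sidedness of the quadratic cost's tail, with \eqref{eq:safe} fixing the sign $a>0$), and that Gaussian smoothing strictly preserves monotonicity so that the level set $\{p_1^*(t)=0\}$ is $\mathbb{P}$-negligible for each fixed $t$. A secondary point to treat carefully is the compatibility of the pointwise maximization condition with the deterministic control class, and a delimitation of exactly where safety is essential as opposed to where the argument is in fact more robust.
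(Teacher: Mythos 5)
Your overall strategy is genuinely different from the paper's: you argue directly that $p_v^*=(T-t)p_y^*$ never vanishes via an explicit Gaussian computation, whereas the paper argues by contradiction, working only with the \emph{expectations} $\mathbb{E}[p_y^*]$ and $\mathbb{E}[p_v^*]$ and the dual representation of $\textnormal{AV@R}_{\alpha}$ (non-bang-bang forces $\mathbb{E}[p_v^*(t)]=0$ for some $t<T$, hence $\mathbb{E}[\xi^*(y^*(T)-y_T)]=0$, hence $y_T\le\textnormal{AV@R}_{\alpha}(y^*(T))$, contradicting safety). Unfortunately your route has a genuine gap, and the tell is one you half-noticed yourself: the safety hypothesis never does any work. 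The terminal map $g(w)=\tfrac1\alpha(a-w)\mathds{1}_{\{|w-a|>b\}}$ is non-increasing and non-constant for \emph{every} value of $a\in\mathbb{R}$, so your argument, if valid, would prove that every optimal control is bang-bang. That conclusion is false: if $y_0+v_0T=y_T$, then $(W_T-a)^2$ is stochastically increasing in $|a|$ and $\textnormal{AV@R}_{\alpha}$ is monotone, so $u^*\equiv 0$ (which gives $a=0$) is optimal in $\mathcal{U}=L^2([0,T],[-1,1])$, and it is neither bang-bang nor safe.

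The error sits exactly at the point you flagged as ``secondary'': you simultaneously use the pointwise-in-$\omega$ maximization condition \eqref{eq:PMPMaximisation} — which is a necessary condition only for minima over \emph{progressively measurable} controls, since the variations in its proof are $\omega$-dependent tangent selections — and the representation $y^*(T)=m+W_T$ with $m$ deterministic, which requires $u^*$ to be deterministic. These two requirements are incompatible: a control optimal only within the deterministic class need not satisfy \eqref{eq:PMPMaximisation} pointwise in $\omega$ (in the example above it manifestly does not, since $p_v^*(t)\neq 0$ a.s.\ yet $u^*\equiv 0$), while a local minimum over adapted controls need not be deterministic, which destroys the Gaussian structure $p_y^*(T)=g(W_T)$. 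The correct necessary condition against deterministic perturbations is maximization of the \emph{expected} Hamiltonian, yielding only $u^*(t)=\textnormal{sign}\big(\mathbb{E}[p_v^*(t)]\big)$ where $\mathbb{E}[p_v^*(t)]=(T-t)\,\mathbb{E}[\xi^*(y_T-y^*(T))]\neq 0$ — and it is precisely here that safety becomes load-bearing, since $\xi^*\in\partial(\textnormal{AV@R}_{\alpha})(0)$ and $\mathbb{E}[\xi^*]=1$ give $\mathbb{E}[\xi^*(y_T-y^*(T))]\ge y_T-\textnormal{AV@R}_{\alpha}(y^*(T))>0$. Reworked along those lines your direct approach does close (and in fact gives $u^*\equiv+1$), but as written the pointwise non-vanishing of $p_y^*$ is both unnecessary for the statement and insufficient to establish it.
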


\begin{proof}
Suppose by contradiction that we are given a safe optimal trajectory $(y^*,v^*)$ driven by a control $u^*$ that is not bang-bang. It can be easily verified that the data of \eqref{eq:SOP} satisfy Hypotheses \ref{hyp:MAC}, so that by Theorem \ref{Theo:uncontrolledDiff}, there exist stochastic processes $p_y^*,p_v^* \in C^2_{\mathcal{F}}([0,T] \times\Omega,\mathbb{R})$ and $q_y^*,q_v^* \in L^2_{\mathcal{F}}([0,T] \times\Omega,\mathbb{R})$ such that
\begin{equation}
\label{eq:AdjointSOP}
\left\{
\begin{aligned}
\mathrm{d}p_y^*(t) & = q_y^*(t) \mathrm{d}W_t, \hspace{1cm} p_y^*(T) = \xi^*(y_T-y^*(T)), \\
\mathrm{d}p_v^*(t) & = - p_y^*(t) \mathrm{d}t + q_v^*(t)  \mathrm{d}W_t, \hspace{1.37cm} p_v^*(T) =0,
\end{aligned}
\right.
\end{equation}
where in particular $\xi^* \in \partial (\textnormal{AV@R$_{\alpha}$})(0)$, and for which the maximization condition 
\begin{equation}
\label{eq:MaximizationSOP}
p_v^*(t) \, u^*(t) = \max_{u \in [-1,1]} \,  p_v^*(t) \,  u
\end{equation}
holds almost everywhere. Since we assumed that $u^*$ is not bang-bang, as a consequence of \eqref{eq:MaximizationSOP} there must exist a closed interval $I \subset [0,T]$ over which $p_v^* = 0$. Besides, it follows from standard properties of the Brownian motion applied to \eqref{eq:AdjointSOP} that 
\begin{equation}
\label{eq:ExpAdjointSOP}
\tfrac{\mathrm{d}}{\mathrm{d} t} \mathbb{E}[ p_y^*(t) ] = 0 \qquad \text{and} \qquad \tfrac{\mathrm{d}}{\mathrm{d} t} \mathbb{E}[ p_v^*(t) ] = - \mathbb{E}[ p_y^*(t) ]
\end{equation}
for all times $t \in [0,T]$. Since $t \in [0,T] \mapsto \mathbb{E}[p_v^*(t)]$ is Lipschitz by construction, it necessarily holds that $\mathbb{E}[ p_v^*(t) ] = 0$ on $I$, so that $\mathbb{E}[ p_y^*(t) ] = 0$ on $I$ as well, and thus
\begin{equation*}
\mathbb{E}[\xi^*(y^*(T)-y_T)] = 0
\end{equation*}
thanks to the uniqueness of solutions to \eqref{eq:ExpAdjointSOP}. Observing now that $\xi^* \in \partial \textnormal{AV@R$_{\alpha}$}(0)$ by construction, it follows from Definition \ref{def:Risk} and Theorem \ref{thm:RiskMeasures} that 
\begin{equation*}
\begin{aligned}
\mathbb{E}[\xi^*(y^*(T)-y_T)] & \leq \max_{\xi \in \partial (\textnormal{AV@R$_{\alpha}$})(0)} \mathbb{E}[\xi (y^*(T)-y_T)] \\
& = \textnormal{AV@R$_{\alpha}$}(y^*(T)-y_T) = \textnormal{AV@R$_{\alpha}$}(y^*(T)) - y_T.
\end{aligned}
\end{equation*}
In particular, we then recover that $y_T \leq \textnormal{AV@R$_{\alpha}$}(y^*(T))$ and  the optimal trajectory is not safe, which contradicts our primary assumption. 
\end{proof}

\section{Conclusion and perspectives} \label{sec:conclusion}

\setcounter{equation}{0} \renewcommand{\theequation}{\thesection.\arabic{equation}}

In this paper, we developed a new method for proving a first-order version of the Pontryagin Maximum Principle for non-smooth risk-averse optimal control problems, based on set-valued linearisations. The main incentive to do so was to produce optimality conditions that could encompass typical risk functions such as the AV@R, which is merely directionally differentiable. In the future, we aim at furthering these investigations in three main directions.

Firstly, we want to see whether it is feasible to weaken or remove the convexity assumptions on the dynamics. Owing to the lack of relaxation property for sollutions of \eqref{eq:SDI} illustrated in Remark \ref{rmk:Ito}, this will most likely call for innovative proof strategies. Secondly, we want to leverage the optimality conditions proposed here to design efficient numerical methods for solving risk-averse optimal control problems, such as indirect risk-averse shooting methods. Lastly, we plan to investigate whether the optimality conditions discussed in this article might yield other important structure properties on risk-averse optimal controls, such as \textit{semi-Markovianity}. Usually, the fact that optimal controls exhibit a Markovian dependance with respect to the state variable usually stems from the dynamic programming and HJB equations. While these latter are still largely unavailable in the risk-averse settings, we hope that our risk-averse PMP may take over and be sufficiently powerful to carry out the analysis.


{\footnotesize
\bibliographystyle{plain}
\bibliography{ref}
}

\end{document}